 \DeclareMathAlphabet{\pazocal}{OMS}{zplm}{m}{n}
\newtheorem{theorem}{Theorem}[section]
\newtheorem{lemma}[theorem]{Lemma}
\newtheorem{proposition}[theorem]{Proposition}
 \newtheorem{main}{Theorem}
\theoremstyle{definition}
\newtheorem{definition}[theorem]{Definition}
\newtheorem{question}{Question}
\theoremstyle{remark}
\newtheorem{remark}[theorem]{Remark}
\numberwithin{equation}{section}
\newcommand{\R}{\ensuremath{\mathbb{R}}}
\newcommand{\N}{\ensuremath{\mathbb{N}}}
\newcommand{\bi}{\mathbf{i}}
\newcommand{\bj}{ {\mathbf{j}}}
\newcommand{\bk}{ {\mathbf{k}}}
\newcommand{\sla}{ {\tilde\lambda}}
\renewcommand{\u}{\ensuremath{\pazocal{U}}}
\newcommand{\set}[1]{\left\{#1\right\}}
\newcommand{\la}{\lambda}
\newcommand{\Ga}{\Gamma}
\newcommand{\ep}{\varepsilon}
\newcommand{\f}{\infty}
\newcommand{\Om}{\Omega_\lambda}
\newcommand{\De}{I}
\newcommand{\La}{\Lambda}
\newcommand{\ra}{\rightarrow}
\begin{document}

\begin{frontmatter}



\title{On the structure of $\lambda$-Cantor set with overlaps}


\author[UU]{Karma~Dajani}
\ead{k.dajani1@uu.nl}
\address[UU]{Department of Mathematics, Utrecht University, Fac Wiskunde en informatica and MRI, Budapestlaan 6, P.O. Box 80.000, 3508 TA Utrecht, The Netherlands}

\author[CU]{Derong~Kong}
\ead{derongkong@126.com}
\address[CU]{College of Mathematics and Statistics, Chongqing University, 401331, Chongqing, P.R.China}

\author[ECUST,UU]{Yuanyuan~Yao\corref{cor1}}
\ead{yaoyuanyuan@ecust.edu.cn} \cortext[cor1]{Corresponding author}
\address[ECUST]{Department of Mathematics, East China University of Science and Technology, Shanghai 200237, P.R. China}

\address{}

\begin{abstract}
Given $\la\in(0, 1)$, let $E_\la$ be the self-similar set generated by the iterated function system  $\{x/3,(x+\la)/3,(x+2)/3\}$. Then $E_\la$ is a self-similar set with overlaps. We obtain the sufficient and necessary {condition} for $E_\la$ to be {\it totally self-similar}, which is a concept first introduced by Broomhead, Montaldi, and Sidorov in 2004. When $E_\la$ is totally self-similar, all its generating IFSs are investigated, and the size of the set of points having finite triadic codings is determined. Besides, we give some properties of the spectrum of $E_\la$ and show that the spectrum of $E_\la$ vanishes if and only if $\la$ is irrational.

\end{abstract}

\begin{keyword}
Self-similar sets with overlaps\sep totally self-similar\sep lower spectrum\sep generating iterated function systems\sep finite codings



\end{keyword}

\end{frontmatter}


 \section{Introduction}\label{s:introduction}

This paper studies the properties of a kind of overlapping self-similar set. Given $\la\in(0, 1)$, the \emph{$\la$-Cantor set} $E_\la$ is  the  {self-similar set} generated by the iterated function system
 \begin{equation}\label{eq:fd-similitude}
 f_d(x):=\frac{x+d}{3},\quad d\in\Om:=\set{0,\la, 2}.
 \end{equation}
 Then $E_\la$ is the unique non-empty compact set in $\R$ satisfying $E_\la=\bigcup_{d\in\Om}f_d(E_\la)$ (cf.~\cite{Hut-81}). Since $\la\in(0, 1)$, one can see that $f_0(I)\cap f_\la(I)\ne\emptyset$, where $I:=[0, 1]$ is the convex hull of $E_\la$. So $E_\la$ is a self-similar set with overlaps.

The interest in $E_\la$ stems from a conjecture of H.~Furstenberg. In the 1970s, he  conjectured that $\dim_HE_\la=1$ for all irrational $\la$  (see e.g. \cite{Per-Sol-00}, Question 2.5); this was partially answered by Kenyon \cite{Ken-97} and was finally proved by B. Solomyak and P. Shmerkin (their proof was included in the work of Hochman \cite{Hoc-14}).

The majority of the work done concerns the Hausdorff dimension and measure of $E_\la$ (see \cite{Ken-97,Rao-Wen-98,Swi-Vee-02}). However, in this paper we are interested in the following problems: what can we say about its  spectrum and all its generating iterated function systems? To make these questions clear, we review some standard terminology.

It follows from (\ref{eq:fd-similitude}) that for any $x\in E_\la$ there exists an infinite sequence $(d_i)$ over the set $\Omega_\la$  such that
 \[
 x=\lim_{n\ra\f}f_{d_1\ldots d_n}(0)=\sum_{i=1}^\f \frac{d_i}{3^i}=:((d_i))_3
 \]
 where $f_{d_1\ldots d_n}:=f_{d_1}\circ\cdots\circ f_{d_n}$ denotes the composition of $f_{d_1}, \ldots, f_{d_n}$. The infinite sequence $(d_i)$ is called a \emph{coding} of $x$ with respect to the digit set $\Om$.  Since $\la\in(0,1)$, a point in $E_\la$ may have multiple codings.

 Denote by $\Om^*$ the set of all finite words over the set $\Omega_\la$: $\Om^*:=\bigcup_{n=0}^\f\Om^n$,  where for $n=0$ we set $\Om^0:=\set{\epsilon}$ with $\epsilon$ being the empty word, and write $f_{\epsilon}$ the identity map.  Denote by $\Omega_\la^{\mathbb N}$ the set of all infinite words over the set $\Omega_\la$.

In 2004 Broomhead, Montaldi and Sidorov \cite{Bro-Mon-Sid-04} introduced the following finer  family of self-similar sets with overlaps.

 \begin{definition}\label{def:totally-self-similar}
 $E_\la$ is \emph{totally self-similar} if
 \[
 f_{\bi}(E_\la)=f_{\bi}(\De)\cap E_\la\quad\textrm{for any }\bi\in\Om^*.
 \]
 \end{definition}
 Our first result describes when  $E_\la$ is totally self-similar.

 \begin{main}
 \label{th:1}
 Let $\la\in(0,1)$. Then $E_\la$ is totally self-similar if and only if $\la=1-3^{-m}$ for some positive integer $m$.
 \end{main}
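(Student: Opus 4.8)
The plan is to pass through a single clean reformulation of total self-similarity and then attack the two implications separately; I expect the forward construction for $\la=1-3^{-m}$ to be routine and the converse to carry the real difficulty.

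\emph{Reduction.} Since $E_\la=\bigcup_{d\in\Om}f_d(E_\la)$ forces $f_d(E_\la)\subseteq E_\la$, iterating gives $f_{\bi}(E_\la)\subseteq E_\la$, and as $f_{\bi}(E_\la)\subseteq f_{\bi}(I)$ trivially, one inclusion $f_{\bi}(E_\la)\subseteq f_{\bi}(I)\cap E_\la$ is automatic for every $\bi\in\Om^*$. Thus total self-similarity is equivalent to the reverse inclusion. A one-line induction on the length of $\bi$, using the injectivity of $f_d$ and $f_d(f_{\bj}(I)\cap E_\la)=f_d(f_{\bj}(I))\cap f_d(E_\la)$, collapses the whole family to the three one-letter words. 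The word $2$ is free: $E_\la\subseteq[0,(1+\la)/3]\cup[2/3,1]$, so the gap $((1+\la)/3,2/3)$ forces every coding of a point of $E_\la\cap f_2(I)$ to begin with $2$. The only overlap is $O:=f_0(I)\cap f_\la(I)=[\la/3,1/3]$, and a point $x\in O$ lies in $f_0(E_\la)$ (resp. $f_\la(E_\la)$) exactly when $3x\in E_\la$ (resp. $3x-\la\in E_\la$). Running this over all $x\in E_\la\cap O$ and rescaling, the two conditions merge into the single identity
\[
E_\la\cap[\la,1]=\big(E_\la\cap[0,1-\la]\big)+\la ,
\]
which I shall denote $(\dagger)$: the top slice of $E_\la$ is its bottom slice pushed up by $\la$.

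\emph{Sufficiency.} Assume $\la=1-3^{-m}$, i.e. $\la=0.\underbrace{2\cdots2}_{m}$ in base $3$; the crucial feature is that using the digit $\la$ at a place contributes exactly what $m$ consecutive $2$'s contribute at the next $m$ places. One inclusion of $(\dagger)$ is clean because points of $E_\la$ above $\la$ are forced to start with $2^m$, so $E_\la\cap[\la,1]=f_{2^m}(E_\la)=f_{0^m}(E_\la)+\la\subseteq(E_\la\cap[0,1-\la])+\la$. For the reverse inclusion I would show that every $y\in E_\la$ with $y\le1-\la$ has a coding beginning with $0^m$: size estimates force the first $m-1$ digits to be $0$, and if the $m$-th is not, peeling produces $g(y):=3^{m}y-\la\in E_\la\cap[0,1-\la]$ with the \emph{same} defect and $g(y)<y$; since the only fixed point of $g$ in $[0,1-\la]$ is $1-\la$, the resulting strictly decreasing sequence cannot converge to a fixed point, a contradiction. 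Once $y$ starts with $0^m$, replacing that prefix by $2^m$ adds exactly $\la$ and keeps the point in $E_\la$, giving $y+\la\in E_\la$.

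\emph{Necessity and the main obstacle.} Assume $(\dagger)$. Testing it on the endpoints $0$ and $1$ yields $\la\in E_\la$ and $1-\la\in E_\la$; a short digit analysis of $\la\in E_\la$ already rules out $\la\in(1/2,2/3)$ and more, so one may assume $\la$ large. For $\la>1/2$ the lower branch cannot reach $[\la,1]$, whence $E_\la\cap[\la,1]=f_2\big(E_\la\cap[3\la-2,1]\big)$; feeding this into $(\dagger)$ gives the renormalization
\[
E_\la\cap[3\la-2,1]=3\big(E_\la\cap[0,1-\la]\big)+(3\la-2),
\]
which is a copy of the same type of identity with $\la$ replaced by $3\la-2$. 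I would then argue that $(\dagger)$ can hold only if the orbit of $\la$ under $\la\mapsto 3\la-2$ reaches the base value $2/3$ (equivalently $0$) in finitely many steps, and that this forces $1-\la=3^{-m}$. Making this descent rigorous—showing that for any $\la$ whose orbit is infinite or exits at a non-triadic value some point of one slice is thrown into a gap of the other—is where I expect the real work to lie, since it requires controlling how the self-similar gap structure of the two slices must line up on a common triadic grid.
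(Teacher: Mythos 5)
Your reduction of total self-similarity to the three one-letter conditions, and from there to the single identity $(\dagger):\ E_\la\cap[\la,1]=(E_\la\cap[0,1-\la])+\la$, is correct (the induction on word length using injectivity of the $f_d$ does work), and your sufficiency proof via $(\dagger)$ is also correct and genuinely different from the paper's: the paper characterizes total self-similarity through separation of the sets $f_{\bi}(H)$ (its Proposition 2.1) and proves sufficiency by establishing the spectral gap $3^n|f_{\bi}(0)-f_{\bj}(0)|\ge \frac23$ via a lexicographic-minimality argument on the coefficients, a bound it then reuses to compute $l_\la=\frac23$; your coding/fixed-point argument is more elementary and self-contained but yields only the set identity.

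The necessity direction, however, is a genuine gap: you explicitly defer the decisive step (``making this descent rigorous \dots is where I expect the real work to lie''), so nothing is actually proved for $\la\ne1-3^{-m}$. Moreover the descent as sketched does not self-replicate. The renormalized identity $E_\la\cap[3\la-2,1]=3\left(E_\la\cap[0,1-\la]\right)+(3\la-2)$ concerns the same set $E_\la$ on both sides, not $E_{3\la-2}$, so the geometry does not rescale along with the parameter: already at the second step you would need $E_\la\cap[\la',1]=f_2(E_\la)\cap[\la',1]$ with $\la'=3\la-2$, which requires $\la'>(1+\la)/3$, i.e.\ $\la>\frac78$, not merely $\la'>\frac12$; for smaller $\la$ the slice $[\la',1]$ meets $f_\la(I)$ and the identity changes form. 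The regime $\la\le\frac12$ is not addressed at all ($\la\in E_\la$ is no contradiction there; e.g.\ $1/3=f_0(1)\in E_{1/3}$). By contrast, the paper's necessity proof is a short explicit computation: for $\la\in(1-3^{-k},1-3^{-k-1})$ it exhibits the words $02^k$ and $\la 0^k$, which define distinct maps yet satisfy $f_{02^k}(H)\cap f_{\la 0^k}(I)\ne\emptyset$, violating its separation criterion. To finish within your framework you would need an analogous explicit witness for each such $\la$: a point of $E_\la\cap[0,1-\la]$ whose translate by $\la$ lands in a gap of $E_\la$ (or vice versa). That computation is the actual content of the necessity, and it is missing.
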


The question of spectrum was motivated by the study of non-integer base expansions   and was first initiated by Erd\H{o}s, Jo\'o and Komornik in the late 90s (see \cite{Erd-Joo-Kom-90}): Let $q> 1$ and $m\in {\mathbb N}$, define
\begin{equation*}
X_m(q):=\left\{\sum_{i=0}^n\epsilon_iq^i:\epsilon_i\in\{0,1,\cdots,m\}; ~n=0,1,\cdots\right\}.
\end{equation*}
We may arrange the elements of $X_m(q)$ into an increasing sequence as $X_m(q)$ is discrete:
\begin{equation*}
0=x_0(q,m)<x_1(q,m)<x_2(q,m)<\cdots.
\end{equation*}
Denote the spectrum
\begin{equation}\label{def:spectrumliminf}
l_m(q):=\liminf_{n\to\infty}(x_{n+1}(q,m)-x_n(q,m)).
\end{equation}

They asked for which pairs $(q,m)$ the equation $l_m(q)=0$ holds. A full answer was given by Akiyama-Komornik \cite{Aki-Kom-13} and Feng \cite{Fen-16}, which completes former partial results of Erd\"os-Komornik \cite{Erd-Kom-98} and Zaimi \cite{Zai-07}. More references 
 can be found in \cite{Erd-Joo-Kom-90,Kom-Lor-Ped-00,Sid-Sol-11}.

Since the set $\set{\sum_{i=0}^{n-1}c_i\cdot 3^i,c_i\in\set{0,\la,2-\la};~n=1,2,\cdots}$ is not necessarily discrete, so we may not arrange its elements into an increasing order as above. Therefore, we cannot define the  spectrum as in (\ref{def:spectrumliminf}). However, there do exist an equivalent form of (\ref{def:spectrumliminf}) as follows (see \cite{Aki-Kom-13}).
  \begin{align*}
l_m(q)&=\inf\left\{\left|\sum_{i=0}^n\epsilon_iq^i\right|\ne 0:\epsilon_i\in\{0,  1,\cdots, m\}-\set{0,1,\cdots, m};~n=0,1,2,\cdots\right\}\\
 &=\inf\left\{\left|\sum_{i=0}^n\epsilon_iq^i\right|\ne 0:\epsilon_i\in\{0,\pm 1,\cdots,\pm m\};~n=0,1,2,\cdots\right\}.
 \end{align*}

 We adopt a similar definition of spectrum  here.
\begin{definition}\label{def:spectrum}
For $\la\in(0,1)$,
  the \emph{spectrum} of $E_\la$ is defined by
\begin{eqnarray*}
l_\la
:=\inf\set{\left|\sum_{i=0}^{n-1}d_i\cdot 3^i\right|\ne 0:d_i\in\set{0,\pm\la,\pm(2-\la),\pm 2};~n=1,2,\cdots}.
\end{eqnarray*}
\end{definition}

It is worth mentioning that it is meaningless  to change {\bf inf} into {\bf sup} in the definition of $l_\la$. This is because if we let each $d_i$ take the value $2$, then {$\sum_{i=0}^{n-1}d_i\cdot 3^i=3^{n}-1$} tends to infinity as $n$ increases.

In the following we characterize all $\la$ such that $l_\la$ vanishes and give some properties of $l_\la$.

\begin{main}\label{th:2}\mbox{}

\begin{enumerate}[{\rm(i)}]

\item   $l_\la=0$ if and only if $\la$ is irrational.

\item $l_\la=\frac{2}{3}$ if and only if $E_\la$ is totally self-similar.
Furthermore, if $E_\la$ is not totally self-similar, then
$
0\le l_\la\le \min\set{\la, \frac{1}{2}}.
$

\item If $\la$ is rational, then $l_\la$ is computable. In particular, if $\la=\frac{m\cdot 3^n}q$ is in  lowest terms with $m=1,2$ and $n$ being a non-negative integer, then $l_\lambda=\frac mq$.

\end{enumerate}
\end{main}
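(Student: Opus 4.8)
The plan is to reduce all three statements to one combinatorial object. Writing $G_\la:=\set{\sum_{i=0}^{n-1}d_i3^i : d_i\in\set{0,\pm\la,\pm(2-\la),\pm2},\ n\ge1}$, we have $l_\la=\inf\set{|g|:g\in G_\la\setminus\set0}$, and $G_\la$ is the smallest subset of $\R$ containing $0$ and closed under $g\mapsto 3g+d$. Each digit $d$ equals $u+v\la$ for a pair $(u,v)\in\mathbb Z^2$ with $u$ even, so every $g\in G_\la$ is $g=u+v\la$ for an \emph{admissible} pair obtained as a base-$3$ expansion $u=\sum a_i3^i,\ v=\sum b_i3^i$ with $(a_i,b_i)$ ranging over the seven vectors attached to the seven digits. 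A useful elementary estimate, got by isolating the top nonzero digit, is that $|g|<1$ forces the leading digit to be $\pm\la$; this controls how small $g$ can be. Throughout I use the symmetry $g\leftrightarrow -g$.

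For (i), if $\la=p/q$ in lowest terms then every digit, hence every $g\in G_\la$, lies in $\tfrac1q\mathbb Z$, so $l_\la\ge\tfrac1q>0$. Conversely, for irrational $\la$ I will produce nonzero admissible sums tending to $0$. Since $\set{v\la\bmod2:v\in\mathbb Z}$ is then dense, continued-fraction convergents $p_n/q_n\to\la$ give $|q_n\la-p_n|\to0$; doubling to make the integer part even, the candidate values $-2p_n+2q_n\la$ are nonzero and tend to $0$. It remains to realise (a bounded multiple of) such a pair as an admissible expansion, which amounts to checking that the base-$3$ reduction of $(u,v)$ never meets the two residue classes modulo $3$ that no digit realises. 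I expect this representability step to be the delicate point, to be handled by pairing the convergent with a small multiplier $t\in\set{1,2,\dots}$ that steers the reduction away from the forbidden residues.

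For (ii) I first record the explicit family $g_k:=3^k\la-(3^k-1)$, whose value is $1-3^k(1-\la)$ and which uses only the digits $\la$ (at position $k$) and $-2$ (below). For $\la\le\tfrac12$ the single sum $g=\la$ already gives $l_\la\le\la$, while for $\la\in[\tfrac12,1)$ one chooses $k$ with $1-\la\in[\tfrac1{2\cdot3^k},\tfrac1{2\cdot3^{k-1}}]$, forcing $|g_k|\le\tfrac12$. Since $g_k=0$ exactly when $\la=1-3^{-k}$, i.e.\ exactly in the totally self-similar case by Theorem~\ref{th:1}, these sums are nonzero whenever $E_\la$ is not totally self-similar, yielding $l_\la\le\min\set{\la,\tfrac12}$ there, and in particular $l_\la\le\tfrac12<\tfrac23$. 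Conversely, when $\la=1-3^{-m}$ the sum $g_{m-1}=3^{m-1}\la-(3^{m-1}-1)$ evaluates to exactly $\tfrac23$, so $l_\la\le\tfrac23$; combined with the matching lower bound from the reduction graph below and with Theorem~\ref{th:1}, this gives $l_\la=\tfrac23$ if and only if $E_\la$ is totally self-similar.

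For (iii), and for the lower bounds used above, I clear the denominator: for $\la=p/q$ one has $qg=\sum c_i3^i\in\mathbb Z$ with $c_i$ in the finite set $C:=\set{0,\pm p,\pm(2q-p),\pm2q}$, and $l_\la=\min\set{|N|:N\ne0\text{ representable}}/q$. The reduction $N\mapsto(N-c_0)/3$ with $c_0\equiv N\pmod3$ maps the region $|N|\le\tfrac12\max_{c\in C}|c|$ into itself, so the representable integers are precisely those from which a finite directed graph reaches $0$; this reachability is decidable, proving $l_\la$ computable. For $\la=\tfrac{m\cdot3^n}q$ with $m\in\set{1,2}$, a parity/mod-$3$ inspection of $C$ shows every representable $N$ lies in $m\mathbb Z$ (for $m=2$ all digits are even), giving $l_\la\ge m/q$, while a short explicit expansion attains $N=\pm m$, so $l_\la=m/q$. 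The main obstacle throughout is exactly this combinatorial core: the exact lower bounds require showing the reduction graph has no path from a small nonzero $N$ to $0$ (the forbidden residues force the reduction into cycles, as one verifies for $\la=1-3^{-m}$ to pin down the value $\tfrac23$), and in the irrational case the same forbidden-residue analysis is what must guarantee that infinitely many small approximants are genuinely representable.
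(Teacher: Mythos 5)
Your treatment of the upper bounds in (ii) is correct and in fact cleaner than the paper's: the one explicit family $g_k=3^k\la-(3^k-1)=1-3^k(1-\la)$ (digit $\la$ at position $k$, digit $-2$ below), with $k$ chosen so that $3^k(1-\la)\in[\tfrac12,\tfrac32)$, gives $|g_k|\le\tfrac12$ and $g_k=0$ exactly when $\la=1-3^{-m}$; the paper instead runs a multi-case analysis on the greedy $\set{0,\pm2}$-expansion of $\la$ (Lemma \ref{lem:spectrum-upper bound}) to reach the same bound. Likewise your denominator-clearing lower bound $l_\la\ge 1/q$ and the parity argument for $m=2$ match the paper's Lemma \ref{spectrum:m=12}.

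However, there is a genuine gap at the heart of part (i), in the direction ``$\la$ irrational $\Rightarrow l_\la=0$''. Your plan is to realise (multiples of) the pairs $(u,v)=(-2p_n,2q_n)$ coming from continued-fraction convergents as admissible base-$3$ expansions, steering around the forbidden residues with a small multiplier $t$. This cannot work as stated. The seven digit vectors $(a,b)$ have residues $(0,0),(0,1),(0,2),(2,2),(1,1),(2,0),(1,0)$ modulo $3$, so a pair with $(u,v)\equiv(1,2)$ or $(2,1)\pmod 3$ admits no first digit at all; and since multiplication by $t$ coprime to $3$ sends $\set{(1,2),(2,1)}$ to itself, while $t\equiv 0$ forces the first digit $(0,0)$ and merely reduces to the multiplier $t/3$, \emph{no} integer multiple of such a pair is representable. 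Even when the first step is available, you must exhibit a full reduction path terminating at $(0,0)$, and you must do so for pairs with $|u+v\la|$ arbitrarily small; this is essentially the assertion that the weak separation condition fails for every irrational $\la$, for which no elementary proof is known. The paper circumvents this entirely by a non-constructive argument: $\dim_HE_\la=1$ for irrational $\la$ (Shmerkin--Solomyak, via \cite{Hoc-14}), so if $l_\la>0$ the IFS would satisfy the weak separation condition, whence $E_\la$ would have nonempty interior by Zerner's theorem, forcing $\la$ rational by Kenyon's lemma --- a contradiction. A secondary gap of the same flavour occurs in (ii) and (iii): the lower bound $l_\la\ge\tfrac23$ for $\la=1-3^{-m}$ is deferred to a ``reduction graph'' check, but this must be established uniformly in $m$ (the paper does this with the lexicographically-minimal-counterexample argument giving inequality (\ref{eq:thm1-3})), and the claimed ``short explicit expansion'' attaining $N=\pm m$ when $\la=m\cdot3^n/q$ is precisely the content of the Rao--Wen iteration (Lemma \ref{lem:R-W}) and is not supplied.
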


By Theorems \ref{th:1} and  \ref{th:2} (ii) it follows  that for any $\la\in(0, 1)$ we have $0\le l_\la\le \min\set{\la, \frac{2}{3}}$. Furthermore, $\frac{2}{3}$ is an isolated point of the {spectrum set}
$
\La:=\set{l_\la: \la\in(0, 1)}.
$


Another line of research is about investigating all the generating iterated function systems for a self-similar set, which is of great  interest in fractal image compression (cf.~\cite{Bar-88}).

An {\it iterated function system (IFS)} is a family of contractions $\set{\phi_i(x)=\rho_iR_ix+b_i}_{i=1}^N$ ($N\ge 2$ is an integer) in ${\mathbb R}^d$, where $\rho_i\in(0,1)$ is the contraction ratio, $R_i$ is an orthogonal matrix and $b_i$ is a translation. When all $\rho_1R_1,\cdots,\rho_NR_N$ are equal, we say that the IFS $\set{\phi_i}_{i=1}^N$ is {\it homogeneous}.

It is well known that a given IFS determines a unique non-empty compact $F\subseteq {\mathbb R}^d$, which is called a {\it self-similar set}, such that $F=\bigcup_{i=1}^N\rho_iR_iF+b_i$. Is the converse true? In other words, for a given self-similar set, can we reveal the form of all its generating IFSs?

The above question was  first addressed by D.J. Feng and Y. Wang in \cite{Feng-Wang-09} and later studied by Q.R. Deng, K.S. Lau \cite{Den-Lau-13,Den-Lau-17} and Y. Yao \cite{Yao-17}. In their work, some separation property (open set condition or strong separation condition) is required a priori. By {\it strong separation condition} we mean $\phi_i(F)\cap \phi_j(F)=\emptyset$ for any two different $i,j\in\{1,\cdots,N\}$ in the IFS $\set{\phi_i(x)}_{i=1}^N$. As for the open set condition, one can refer to \cite{Mor-46} for its definition.

If we drop the separation condition assumption, the analysis of all generating IFSs for a specific self-similar set gets more involved (see e.g., \cite{Yao-Li-15}). We shall focus here on a non-trivial example: all generating IFSs for $E_\la$ when it is totally self-similar.
\begin{main}\label{th:3}
Let $\la\in(0,1)$ such that  $E_\la$ is totally self-similar.  If $g$ is an affine map and $g(E_\la) \subseteq E_\la$,  then $g=f_{\bi}$ for some $\bi\in\Om^*$.
\end{main}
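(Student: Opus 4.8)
The plan is to write $g(x)=ax+b$ with $a\neq0$ and to establish, in order, that $|a|\le 1$; that $a=3^{-n}$ for some integer $n\ge0$ together with $g(E_\la)=f_{\bi}(E_\la)$ for a word $\bi$ of length $n$; and finally that the residual isometry is the identity. The ratio bound comes from a convex-hull argument: since $0,1\in E_\la$ we have $\operatorname{conv}(E_\la)=I$, and as $g$ is affine, $\operatorname{conv}(g(E_\la))=g(I)$. Thus $g(E_\la)\subseteq E_\la$ forces $g(I)\subseteq I$, whence $|a|\le1$; moreover the endpoints $g(0),g(1)$ of $g(I)$ lie in $g(E_\la)\subseteq E_\la$.

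The heart of the proof is the second step, for which I would first record the gap structure. By Theorem~\ref{th:1} we may write $\la=1-3^{-m}$, and then the first-level intervals $f_0(I)=[0,\tfrac13]$, $f_\la(I)=[\tfrac13-3^{-m-1},\tfrac23-3^{-m-1}]$ and $f_2(I)=[\tfrac23,1]$ show that $f_0(I)$ and $f_\la(I)$ overlap, while a gap $G=(\tfrac23-3^{-m-1},\tfrac23)$ of length $3^{-m-1}$ separates them from $f_2(I)$; a short induction gives that $3^{-m-1}$ is the largest gap length of $E_\la$. Now suppose $|a|<1$. Since $g(E_\la)$ is a similar copy of $E_\la$ with ratio $|a|$, each of its complementary intervals has length at most $|a|\,3^{-m-1}<3^{-m-1}$. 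Because $G\cap E_\la=\varnothing$ (so $G\cap g(E_\la)=\varnothing$), the interval $g(I)$ cannot contain $G$, for otherwise $G$ would sit inside a single gap of $g(E_\la)$, which is too short. As $g(0),g(1)\in E_\la$ avoid $G$, this confines $g(E_\la)$ either to $f_2(I)$—whence total self-similarity gives $g(E_\la)\subseteq E_\la\cap f_2(I)=f_2(E_\la)$—or to $[0,\tfrac23-3^{-m-1}]=f_0(I)\cup f_\la(I)$. In the latter situation I must untangle the overlap of $f_0(E_\la)$ and $f_\la(E_\la)$: I would zoom into $f_0(I)\cap f_\la(I)$ and use the finer gaps together with the identity $E_\la\cap f_{\bj}(I)=f_{\bj}(E_\la)$ to show that a similar copy cannot straddle the overlap, so that $g(E_\la)$ is confined to one first-level piece $f_d(E_\la)$.

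I would then iterate: replacing $g$ by $f_d^{-1}\circ g$ multiplies the ratio by $3$, and the convex-hull bound applied to the new map keeps it $\le1$, so confinement forces $3|a|\le1$; hence ratios in $(3^{-1},1)$ are impossible, and iterating this dichotomy shows $|a|=3^{-n}$ and produces a word $\bi$ with $g(E_\la)=f_{\bi}(E_\la)$. Finally put $h:=f_{\bi}^{-1}\circ g$, an affine map of ratio $\pm1$ with $h(E_\la)\subseteq E_\la$. Then $h(I)=\operatorname{conv}(h(E_\la))$ has length $1$ and lies in $I$, so $h(I)=I$ and $h$ permutes $\{0,1\}$. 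Ratio $+1$ gives $h=\mathrm{id}$, hence $g=f_{\bi}$. Ratio $-1$ gives $h(x)=1-x$ and $1-E_\la\subseteq E_\la$; but the mirror image $(\tfrac13,\tfrac13+3^{-m-1})$ of $G$ still meets $E_\la$ (the digit set $\{0,\la,2\}$ is not symmetric, as $\la\neq1$), so some point of $G$ lies in $(1-E_\la)\setminus E_\la$, a contradiction. Thus $g=f_{\bi}$ with $\bi\in\Om^*$.

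I expect the overlap analysis in the second step to be the main obstacle: because $f_0(E_\la)$ and $f_\la(E_\la)$ genuinely overlap, there is no gap separating them, so \emph{confinement to a single cylinder} is not automatic, and it is precisely here that the exact location of the gaps and the totally self-similar identity $E_\la\cap f_{\bi}(I)=f_{\bi}(E_\la)$ must be exploited; the ratio step and the final isometry step are comparatively routine once this confinement is in hand.
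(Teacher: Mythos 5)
Your overall architecture (bound the ratio via convex hulls, confine $g(E_\la)$ to a first-level cylinder, rescale and iterate, then kill the residual isometry of ratio $\pm 1$) is the same as the paper's, and several pieces are sound: the bound $|a|\le 1$; the observation that the largest hole $H=(\tfrac23-3^{-m-1},\tfrac23)$ of $E_\la$ cannot fit inside a hole of $g(E_\la)$ when $|a|<1$, so that $g(I)$ misses $H$ and lands in $f_0(I)\cup f_\la(I)$ or in $f_2(I)$; and the endgame ruling out the reflection $x\mapsto 1-x$. The genuine gap is exactly the step you flag as ``the main obstacle'': the claim that a copy $g(E_\la)\subseteq f_0(I)\cup f_\la(I)$ must lie in a single first-level piece. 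Your proposed mechanism --- zoom into the overlap and use finer gaps --- does not deliver this. The overlap window is $W=f_0(I)\cap f_\la(I)=f_{02^m}(I)$, of length $3^{-m-1}$, with $E_\la\cap W=f_{02^m}(E_\la)$. The largest hole of $E_\la$ that a straddling copy is forced to swallow is the one immediately to the left of $\la/3$, of length $3^{-2m-1}$; comparing it with the largest hole $|a|\,3^{-m-1}$ of $g(E_\la)$ only yields $|a|\ge 3^{-m}$, i.e.\ no contradiction anywhere in the range $|a|\in[3^{-m},1)$, which is precisely where all the work lies. (On the right side of $W$ there is not even an adjacent hole to exploit: $1/3$ is a limit from the right of points of $f_\la(E_\la)$, e.g.\ $f_{\la(0^{m-1}\la)^k}(1)\downarrow 1/3$.) Note also that confinement cannot be decoupled from the ratio analysis as your outline suggests: a hypothetical copy with $|a|\in(1/3,1)$ is excluded only \emph{because} confinement would force $3|a|\le 1$, so you cannot first assume the ratio is small in order to rule out straddling.

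At this point the paper does something of a different nature, not a gap-length argument. From $g(E_\la)\subseteq f_0(E_\la)\cup f_\la(E_\la)$ it derives functional inclusions $\mu E_\la+h(b)\subseteq E_\la$ for explicit \emph{expanding} affine maps $h$ of the translation parameter (Lemma \ref{lem:negative-1}); ratios in $(1/3,1)$ are excluded because iterating $h$ drives the translation to infinity (Lemma \ref{lem:step1-1}); for $|\mu|<1/3$ the copy may genuinely sit in the middle block $\La_M$, which \emph{does} straddle $f_0(I)$ and $f_\la(I)$, and that case is resolved by playing two such expanding maps with distinct fixed points against each other (Lemma \ref{|mu|<1/3}); and $|\mu|=1/3$ requires a separate explicit computation (Lemma \ref{|mu|=1/3}). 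Until you supply an argument of comparable strength for the overlap region, the proof is incomplete at its central step.
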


There has been considerable interest in points having multiple $\beta$-expansions since it was first considered by Erd\H{o}s et al.~\cite{Erdos_Horvath_Joo_1991, Erdos_Joo_1992} (see also, Sidorov  \cite{Sid-09}).  For a systematic survey on non-integer base expansions we refer to Komornik \cite{Kom-11}.

 Given $q>1$, Dajani, Kan, Kong and Li in \cite{KKKL-3} considered expansions in base $q$  with digits set $\{0, 1, q\}$. They described the size of sets of points having finite $q$-expansions. Our question is similar to theirs but in a different setting. To be specific, we will determine the size of $E_\la$ when it is totally self-similar, and the size of the set of points having  finite triadic codings with respect to the alphabet $\Om=\set{0, \la, 2}$ as well.

For $k\in\N\cup\set{\aleph_0, 2^{\aleph_0}}$ let
\[
\u_\la^{(k)}:=\set{x\in E_\la: x\textrm{ has precisely }k\textrm{ different triadic codings} }.
\]
Then for each $x\in \u_\la^{(k)}$ there exist precisely $k$ different sequences $(d_i)\in\Om^\N$ such that $x=((d_i))_3$. In particular, for $k=1$ the set $\u_\la^{(1)}$ contains all points with a unique triadic coding.

 \begin{main}
 \label{th:4}
 Let $E_\la$ be totally self-similar with $\la=1-3^{-m}$ for some $m\in\N$. Then
 \[\dim_H E_\la=\dim_H\u_\la^{(2^{\aleph_0})}=s,\]
 where $s\in(0,1)$ satisfies $3^{1+ms}=3^{(m+1)s}+1$.
 \begin{enumerate}[{\rm(i)}]
 \item For any $k\in\N$ we have
 \[
 \dim_H\u_\la^{(k)}=t,
 \]
 where $t\in(0, s)$ satisfies $3^{1+mt}=3^{(m+1)t}+2$.

 \item $\u_\la^{(\aleph_0)}$ is countably infinite.
\end{enumerate}
 \end{main}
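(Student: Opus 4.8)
The plan is to exploit total self-similarity to turn $E_\la$ and the sets $\u_\la^{(k)}$ into (graph-directed) self-similar sets with strong separation, read the dimensions off a mass-distribution identity, and control the cardinality of codings through an explicit analysis of the single overlap $O:=f_0(E_\la)\cap f_\la(E_\la)$. Writing $\la=1-3^{-m}$, the starting observations are the self-similar identities $E_\la\cap[\la,1]=f_{2^m}(E_\la)=\la+E_\la/3^m=:E_\la^+$ and $E_\la\cap[0,3^{-m}]=f_{0^m}(E_\la)=E_\la/3^m=:E_\la^-$, together with $3O=E_\la\cap(E_\la+\la)=E_\la^+$, so that $O=\la/3+E_\la/3^{m+1}$ is an affine copy of $E_\la$ of ratio $3^{-(m+1)}$. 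Splitting $E_\la$ along the overlap gives the essentially disjoint decomposition
\[
E_\la=f_0(E_\la\setminus E_\la^+)\;\sqcup\;O\;\sqcup\;f_\la(E_\la\setminus E_\la^-)\;\sqcup\;f_2(E_\la),
\]
whose four pieces lie in $[0,\la/3]$, $[\la/3,1/3]$, $[1/3,(\la+1)/3]$ and $[2/3,1]$ and so meet only in finitely many points.

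First I would compute $\dim_H E_\la$. Taking $\mathcal H^s$ of the displayed decomposition with $h:=\mathcal H^s(E_\la)$ and using $\mathcal H^s(f_\bi(X))=3^{-|\bi|s}\mathcal H^s(X)$ and $\mathcal H^s(E_\la^\pm)=3^{-ms}h$, essential disjointness yields
\[
1=2\cdot3^{-s}\bigl(1-3^{-ms}\bigr)+3^{-(m+1)s}+3^{-s},
\]
which simplifies to $3^{1+ms}=3^{(m+1)s}+1$; hence $\dim_H E_\la=s$. To make this rigorous I would realise the construction as a finite graph-directed system satisfying the strong separation condition — its states being $E_\la$ together with the finitely many truncations $E_\la\cap[0,1-3^{-j}]$ and $E_\la\cap[3^{-j},1]$, $1\le j\le m$ — so that $0<\mathcal H^s(E_\la)<\infty$ and the dimension is the unique zero of the associated pressure.

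Next I would treat $\u_\la^{(k)}$. The combinatorial heart is that the only coincidence between two codings is the local switch $0\,2^m\leftrightarrow\la\,0^m$: from $\sum_{j=1}^m 2/3^{n+j}=\la/3^n$ one checks that if two codings first differ at position $n$ then their digits there are $0$ and $\la$, and the following $m$ places are forced to be $2^m$ and $0^m$. Consequently a point has more than one coding exactly when its orbit under the expansion map meets $O$, so $\u_\la^{(1)}$ is the survivor set avoiding $O$, and it obeys the same decomposition as $E_\la$ but with the overlap copy $O$ deleted. Running the identical mass-distribution without the $3^{-(m+1)s}$ term gives $1=2\cdot3^{-t}(1-3^{-mt})+3^{-t}$, i.e.\ $3^{1+mt}=3^{(m+1)t}+2$, so $\dim_H\u_\la^{(1)}=t$; the replacement of $+1$ by $+2$ is precisely the fingerprint of the missing overlap copy. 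For general finite $k$ the bound $\dim_H\u_\la^{(k)}\le t$ follows because the finitely many codings of such a point share a common un-switchable tail, whence $\u_\la^{(k)}\subseteq\bigcup_{\bi\in\Om^*}f_\bi(\u_\la^{(1)})$, while the matching lower bound comes from prefixing a fixed finite word realising exactly $k$ codings to an arbitrary unique-coding tail. The delicate point — and the step I expect to be the main obstacle — is the exact description of the fibres of the coding map: because $O$ sits at the top of the $f_0$-cylinder, limit phenomena such as $\la^\infty$ and $0\,2^\infty$ coding the same point, and the asymmetric way $O$ meets the top and bottom subcopies, must be handled carefully to pin down $\u_\la^{(1)}$ and to justify its self-similar relations.

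Finally I would settle the cardinalities. Tracking codings through $O$ shows that countable-but-infinite multiplicity arises only from an infinite ``chain'' of switches, and that such a chain survives forever precisely when the relevant orbit stays in the top cylinder $f_{2^m}(E_\la)$; since $\bigcap_{i\ge1}f_{2^m}^{i}(E_\la)=\{1\}$, the only germ is the point $1/3=f_0(1)$, whose codings are $0\,2^\infty$, its finitely switched relatives and the limit $(\la\,0^{m-1})^\infty$ — a countably infinite set. Hence every point of $\u_\la^{(\aleph_0)}$ has a coding eventually equal to $2^\infty$, so $\u_\la^{(\aleph_0)}\subseteq\{f_\bi(1):\bi\in\Om^*\}$ is countable, and it is infinite because the $f_{0^n}(1/3)$ are distinct; this is (ii). The remaining equality is then free: $E_\la=\bigcup_k\u_\la^{(k)}$ is a countable union, so $\dim_H E_\la$ equals the supremum of the dimensions of the pieces; as every piece other than $\u_\la^{(2^{\aleph_0})}$ has dimension at most $t<s$ or is countable, the full value $s$ must be attained on $\u_\la^{(2^{\aleph_0})}$, giving $\dim_H\u_\la^{(2^{\aleph_0})}=s$.
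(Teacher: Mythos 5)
Your strategy is the same as the paper's: identify the overlap as $f_{02^m}(E_\la)=f_{\la 0^m}(E_\la)$, disjointify, and read the dimensions off a Hausdorff--measure identity. Your treatment of $\dim_H E_\la$, the countable-stability argument giving $\dim_H\u_\la^{(2^{\aleph_0})}=s$, and the outline of (ii) are sound. The genuine problem is the step for $\u_\la^{(1)}$, and it sits exactly where you flag the delicacy. Your claim that after a first disagreement the two codings are forced to continue $02^m$ and $\la 0^m$ is only half true: on the $0$-branch the continuation $2^m$ is forced (every coding of a point of $E_\la\cap[\la,1]=f_{2^m}(E_\la)$ begins with $2^m$), but on the $\la$-branch only $0^{m-1}$ is forced, after which the orbit lands in $E_\la\cap[0,1/3]$ and may re-enter the overlap, producing chains $\la 0^{m-1}\la 0^{m-1}\cdots$. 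Concretely, for $m=1$ the point $1/3=(\la^\f)_3=(02^\f)_3$ has a coding containing neither $02$ nor $\la0$ and yet has infinitely many codings. The structural consequence is an asymmetry your mass identity ignores: while $\u_\la^{(1)}\cap f_{2^m}(\De)=f_{2^m}(\u_\la^{(1)})$, one has
\[
\u_\la^{(1)}\cap f_{0^m}(\De)=f_{0^m}\bigl(\u_\la^{(1)}\setminus f_{2^m}(E_\la)\bigr),
\]
because $f_{0^m}(z)$ with $z\ge\la$ sits $m-1$ steps above the overlap and so acquires a second coding even when $z$ has only one. Hence $\mathcal H^t(\u_\la^{(1)}\setminus E_\la^-)=h\bigl(1-3^{-mt}(1-3^{-mt})\bigr)$ rather than $h(1-3^{-mt})$, and the identity you run should read $1=3^{1-t}-2\cdot3^{-(m+1)t}+3^{-(2m+1)t}$, not $1=3^{1-t}-2\cdot3^{-(m+1)t}$.

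That the extra term is really present can be checked for $m=1$, $\la=2/3$: up to a countable set (the points whose coding ends in $\la^\f$), $\u_\la^{(1)}$ is the image of the subshift of finite type on $\{0,\la,2\}$ forbidding $02$ and $\la0$, whose transfer matrix has characteristic polynomial $x^3-3x^2+2x-1$ and spectral radius $\varrho\approx 2.3247$; since distinct admissible words give distinct maps whose translations satisfy the separation bound (\ref{eq:thm1-3}), the covering multiplicity is bounded and $\dim_H\u_{2/3}^{(1)}=\log\varrho/\log 3\approx 0.768$, strictly larger than the value $\log 2/\log 3\approx 0.631$ predicted by $3^{1+t}=3^{2t}+2$. (A fully elementary lower bound: infinite concatenations of the uniquely decodable blocks $\la$, $2$, $20\la$ avoid both forbidden words and already give growth rate $\varrho_0>2$ with $\varrho_0^3=2\varrho_0^2+1$.) I should add that the paper's own proof makes the identical move --- its equation for $\mathcal H^t(\u_\la)$ takes $f_0(\u_\la)\cap f_\la(\u_\la)=f_{02^m}(\u_\la)$, whereas this intersection equals $f_{02^m}(\u_\la\setminus f_{2^m}(E_\la))$ --- so your proposal faithfully reproduces the published argument and its stated formula; but as a proof the step does not hold, and the chain phenomenon above is also what your arguments for finite $k$ and for $\u_\la^{(\aleph_0)}$ must accommodate.
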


The rest of the paper is arranged as follows. In the next section we give equivalent conditions of totally self-similarity, and prove Theorem \ref{th:1}.
In Section \ref{s:spectrum} we investigate the spectrum of $E_\la$   and  prove Theorem \ref{th:2}. In Section \ref{s:generating IFS} we discuss the generating IFSs of $E_\la$ and establish Theorem \ref{th:3}. In Section \ref{th:5} we consider the set of points in $E_\la$ having finite or countable different codings and complete the proof of Theorem \ref{th:4}. Then we end this last section with  some open questions.


\section{When $E_\la$ is totally self-similar}\label{s:totally self-similar}

Let $\la\in(0, 1)$.  Recall that $\De=[0,1]$ is the  convex hull of $E_\la$. Set $\De_0=\De$, and for $n\ge 1$, let
\[\De_n:=\bigcup_{\bi\in\Om^n}f_{\bi}(\De).\]
Then the sequence  of sets $(\De_n)$   decreases to $E_\la$, i.e.,
\[
\De_0\supseteq\De_1\supseteq\De_2\supseteq\cdots,\quad\textrm{and}\quad \bigcap_{n=0}^\f \De_n=E_\la.
\]
The set $\De_n$ is called  the \emph{$n$-level basic set}, and each  subset $f_{\bi}(\De)$ with $\bi\in\Om^n$ is called an \emph{$n$-level basic interval}.

By a {\it hole} of $E_\lambda$ we mean a connected component in $I\backslash E_\lambda$.
Let $H:=\De\setminus\De_1=(\frac{1+\la}{3}, \frac{2}{3})$. Then $H$ is obviously a hole of $E_\la$ (see Figure \ref{fig:1} below). Set $H_0:=H$, and for $n\ge 1$, let
\[
H_n=\bigcup_{\bi\in\Om^{n}}f_{\bi}(H).
\]

 In general,  $f_{\bi}(H)$ is not necessarily   a hole of $E_\la$. For example, we can easily prove that for $\la=1/3$ the set $f_{1/3}(H)$ is not a hole of $E_{1/3}$. This is because $5/9\in H$ and $f_{1/3}\left(5/9\right)=f_{022}(0)\in E_{1/3}$. However,  when $E_\la$ is totally self-similar  we show that each $f_{\bi}(H)$ is indeed a hole of $E_\la$.

\begin{proposition}
\label{prop:equivalent-condition-totally-self-similar}
The following statements are equivalent.
\begin{enumerate}
[{\rm(i)}]
\item The set $E_\la$ is totally self-similar.

\item For any two finite words $\bi, \bj$, we have
\[
f_{\bi}(E_\la)\cap f_{\bj}(E_\la)=f_{\bi}(\De)\cap f_{\bj}(E_\la)=f_{\bi}(E_\la)\cap f_{\bj}(\De)=f_{\bi}(\De)\cap f_{\bj}(\De)\cap E_\la.
\]

\item $H_n\cap E_\la=\emptyset$ for any $n\ge 0$.

\item For any two finite words $\bi, \bj$ of the same length, we have either $f_{\bi}= f_{\bj}$ or $f_{\bi}(\De)\cap f_{\bj}(H)=\emptyset$.

\item $H_n\cap\De_{n+1}=\emptyset$ for any $n\ge 0$.

\end{enumerate}

\end{proposition}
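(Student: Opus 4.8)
The plan is to treat (i)$\Leftrightarrow$(ii) and (i)$\Leftrightarrow$(iii) as self-contained equivalences, and then to obtain the remaining statements through the cycle
\[
\mathrm{(i)}\Rightarrow\mathrm{(iv)}\Rightarrow\mathrm{(v)}\Rightarrow\mathrm{(iii)}\Rightarrow\mathrm{(i)},
\]
so that all five become equivalent. For (i)$\Leftrightarrow$(ii), note that (ii)$\Rightarrow$(i) is immediate upon taking $\bj=\epsilon$; conversely, assuming (i) I would substitute $f_{\bi}(E_\la)=f_{\bi}(\De)\cap E_\la$ and $f_{\bj}(E_\la)=f_{\bj}(\De)\cap E_\la$ into each of the four sets appearing in (ii), whereupon all four collapse to $f_{\bi}(\De)\cap f_{\bj}(\De)\cap E_\la$. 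This step is pure set algebra and uses only the trivial inclusion $f_{\bi}(E_\la)\subseteq f_{\bi}(\De)\cap E_\la$.

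For (i)$\Leftrightarrow$(iii), the forward direction is quick: for $\bi\in\Om^n$ one has $f_{\bi}(H)\cap E_\la=f_{\bi}(H)\cap f_{\bi}(E_\la)=f_{\bi}(H\cap E_\la)=\emptyset$ by injectivity of $f_{\bi}$ and $H\cap E_\la=\emptyset$. For the reverse, only the inclusion $f_{\bi}(\De)\cap E_\la\subseteq f_{\bi}(E_\la)$ must be shown. I would first record the decomposition $\De\setminus E_\la=\bigcup_{m\ge 0}(\De_m\setminus\De_{m+1})$ together with the elementary containment $\De_m\setminus\De_{m+1}\subseteq H_m$ (a point of $\De_m$ lying outside $\De_{m+1}$ sits in some $f_\bk(\De)\setminus f_\bk(\De_1)=f_\bk(H)$ with $\bk\in\Om^m$). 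Then, for $x\in f_{\bi}(\De)\cap E_\la$, if $f_{\bi}^{-1}(x)\notin E_\la$ it would lie in some $f_\bk(H)$ with $\bk\in\Om^m$, forcing $x\in f_{\bi\bk}(H)\cap E_\la\subseteq H_{n+m}\cap E_\la$ and contradicting (iii).

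The two remaining links of the cycle are easy. For (iv)$\Rightarrow$(v), write $\De_{n+1}=\bigcup_{\bj\in\Om^n}f_{\bj}(\De_1)$; the terms $f_{\bi}(H)\cap f_{\bj}(\De_1)$ with $f_{\bi}=f_{\bj}$ vanish because $H\cap\De_1=\emptyset$, and those with $f_{\bi}\ne f_{\bj}$ vanish by (iv) together with $f_{\bj}(\De_1)\subseteq f_{\bj}(\De)$. The link (v)$\Rightarrow$(iii) is immediate from $E_\la\subseteq\De_{n+1}$. Closing the cycle then makes (iv) and (v) equivalent to the other statements automatically, so no separate converse is needed.

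The crux is (i)$\Rightarrow$(iv), and the main obstacle is a metric fact about the gaps of $E_\la$: \emph{$H$ is the unique hole of $E_\la$ of maximal length $\frac{1-\la}{3}$.} I would prove this by a self-similar recursion on the hole lengths. Writing $M$ for the supremum of the hole lengths and splitting $\De\setminus E_\la$ into $H$, the gaps inside $f_2(\De)$, and the gaps inside $f_0(\De)\cup f_\la(\De)=[0,\frac{1+\la}{3}]$, total self-similarity gives $E_\la\cap f_d(\De)=f_d(E_\la)$, so every gap other than $H$ lies in a $\tfrac13$-scaled copy of $E_\la$ and has length at most $\tfrac13 M$; hence $M=\frac{1-\la}{3}$, attained only by $H$. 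The delicate point is the overlap region $[\frac{\la}{3},\frac13]$ of $f_0(\De)$ and $f_\la(\De)$, where a gap could a priori escape both scaled copies; this is handled by noting $\frac{\la}{3},\frac13\in E_\la$, so any such gap stays inside $(\frac{\la}{3},\frac13)$ and has length below $\frac{1-\la}{3}$. Granting the lemma, suppose $f_{\bi},f_{\bj}\in\Om^n$ with $f_{\bi}\ne f_{\bj}$ and $f_{\bi}(\De)\cap f_{\bj}(H)\ne\emptyset$. The endpoints of the closed interval $f_{\bi}(\De)$ lie in $E_\la$, hence outside the open interval $f_{\bj}(H)$, which misses $E_\la$ by (iii); since $f_{\bj}(H)$ is connected and meets $f_{\bi}(\De)$ without containing either endpoint of $f_{\bi}(\De)$, it must be trapped inside, i.e. $f_{\bj}(H)\subseteq f_{\bi}(\De)$. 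Then $f_{\bi}^{-1}(f_{\bj}(H))$ is a hole of $E_\la$ of length $\frac{1-\la}{3}$, so by uniqueness it equals $H$, giving $f_{\bj}(H)=f_{\bi}(H)$ and thus $f_{\bi}=f_{\bj}$, a contradiction. This closes the cycle and establishes the proposition.
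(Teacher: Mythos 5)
Your proof is correct, but it takes a genuinely different route from the paper's, which runs the single cycle (i)$\Rightarrow$(ii)$\Rightarrow$(iii)$\Rightarrow$(iv)$\Rightarrow$(v)$\Rightarrow$(i). The two substantive divergences are the implications into (i) and into (iv). For the return to (i), the paper proves (v)$\Rightarrow$(i) by a double induction, first establishing $f_{\bi}(H_m)=f_{\bi}(\De)\cap H_{n+m}$ and then $f_{\bi}(\De_m)=f_{\bi}(\De)\cap\De_{n+m}$ for all $m$, letting $m\to\infty$ at the end; your (iii)$\Rightarrow$(i), based on the decomposition $\De\setminus E_\la=\bigcup_{m\ge 0}(\De_m\setminus\De_{m+1})\subseteq\bigcup_{m\ge 0}H_m$, reaches the same conclusion in a few lines and is cleaner. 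For the implication into (iv), the paper argues from (iii): it writes $f_{\bi}(\De\setminus E_\la)=\bigcup_{\bk}f_{\bi\bk}(H)$ and notes that $f_{\bi\bk}(H)$ and $f_{\bj}(H)$ are holes of $E_\la$ which are distinct (different lengths when $\bk\ne\epsilon$, distinct translates when $\bk=\epsilon$), hence disjoint. You instead argue from (i) via a quantitative lemma --- $H$ is the unique hole of maximal length $\frac{1-\la}{3}$, proved by the self-similar recursion on gap lengths --- and then trap $f_{\bj}(H)$ inside $f_{\bi}(\De)$ and rescale. Both hinge on the same underlying fact that distinct holes are disjoint, but the paper's version needs only the qualitative length comparison, whereas yours pays for the maximal-gap recursion; in exchange, your lemma has independent interest and your cycle avoids the paper's induction entirely. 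Two small polishing remarks: in the maximal-hole lemma, a gap inside the overlap $(\frac{\la}{3},\frac{1}{3})$ is already contained in $f_0(\De)$, so the $M/3$ bound applies there and the separate case is unnecessary; and $f_{\bi}^{-1}(f_{\bj}(H))$ should first be observed to be an open interval of length $\frac{1-\la}{3}$ disjoint from $E_\la$, hence contained in a hole of at least that length, before uniqueness identifies that hole with $H$ and equality of lengths forces $f_{\bi}^{-1}(f_{\bj}(H))=H$.
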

\begin{proof}
(i) $\Rightarrow$ (ii) follows directly from Definition \ref{def:totally-self-similar}.

(ii) $\Rightarrow$ (iii). Clearly, (iii)   holds for $n=0$. Now for $n\ge 1$ let $\bi\in\Om^{n}$ and $d\in\Om$. Then by (ii) it follows that $f_{\bi}(E_\la)\cap f_d(E_\la)=f_{\bi}(\De)\cap f_d(E_\la)$. This implies
\[f_{\bi}(\De\setminus E_\la)\cap f_d(E_\la)=\emptyset.\]
Since  $H=\De\setminus\De_1\subset \De\setminus E_\la$, we obtain $f_{\bi}(H)\cap f_d(E_\la)=\emptyset$ for all $\bi\in\Om^{n}$ and all $d\in\Om$. Hence, (iii) follows from that
\[H_{n}=\bigcup_{\bi\in\Om^{n}}f_{\bi}(H)\quad \textrm{and}\quad E_\la=\bigcup_{d\in\Om}f_d(E_\la).\]

(iii) $\Rightarrow$ (iv). Let $\bi, \bj\in\Om^n$ with $n\ge 1$. Suppose $f_{\bi}\ne f_{\bj}$. We will show that $f_{\bi}(\De)\cap f_{\bj}(H)=\emptyset$. By (iii) it follows that $f_{\bi}(E_\la)\cap f_{\bj}(H)=\emptyset$. So, it suffices to prove
\begin{equation}\label{eq:(iii)-->(iv)}
f_{\bi}(\De\setminus E_\la)\cap f_{\bj}(H)=\emptyset.
\end{equation}
In view of (iii),  for each $\bk\in\Om^*\backslash \set{\epsilon}$,  note that the endpoints of $f_{\bi\bk}(H)$ and $f_{\bj}(H)$ belong to $E_\la$, the open intervals  $f_{\bi\bk}(H)$ and $f_{\bj}(H)$ are indeed different holes of $E_\la$ as they are of different length. This, together with $f_{\bi}(H)\cap f_{\bj}(H)=\emptyset$, implies that
\[
f_{\bi\bk}(H)\cap f_{\bj}(H)=\emptyset\quad\textrm{for all }\bk\in\Om^*.
\]
Hence, (\ref{eq:(iii)-->(iv)}) follows from (iii) and $f_{\bi}(\De\setminus E_\la)=\bigcup_{\bk\in\Om^*}f_{\bi\bk}(H)$.

(iv) $\Rightarrow$ (v). Let $n\ge 0$. Observe that $H_n=\bigcup_{\bi\in\Om^n}f_{\bi}(H)$ and $\De_{n+1}=\bigcup_{\bj\in\Om^n}f_{\bj}(\De_1)$. So it suffices to prove that
\begin{equation}\label{eq:(iv)--(v)}
f_{\bi}(H)\cap f_{\bj}(\De_1)=\emptyset \quad\textrm{for any }\bi, \bj\in\Om^n.
\end{equation}
If $f_{\bi}=f_{\bj}$, then (\ref{eq:(iv)--(v)})  follows from $H\cap \De_1=\emptyset$ trivially. If $f_{\bi}\ne f_{\bj}$, then (\ref{eq:(iv)--(v)}) follows from (iv) that $f_{\bi}(H)\cap f_{\bj}(\De)=\emptyset$ and $\De_1\subset \De$.

(v) $\Rightarrow$ (i). First we claim that for any $\bi\in\Om^n$ with $n\ge 0$,
\begin{equation}\label{eq:(v)--(i)-1}
f_{\bi}(H_m)=f_{\bi}(\De)\cap H_{n+m}\quad\textrm{for all }m\ge 0.
\end{equation}
We will prove this by induction on $m$. For $m=0$,  note that for $\bi\in\Om^n$ we get $f_{\bi}(\De_1)\subseteq \De_{n+1}$ and $f_{\bi}(H)\subseteq H_n$. Then by (v) it follows that
\[
f_{\bi}(\De)\cap H_n=(f_{\bi}(H)\cap H_n)\cup(f_{\bi}(\De_1)\cap H_n)=f_{\bi}(H).
\]

Now take $k\ge 0$, and assume that (\ref{eq:(v)--(i)-1}) holds for all $m\le k$ and $\bi\in\Om^n$ with $n\ge 0$. Then by the induction hypothesis it follows that
\begin{equation}\label{eq:(v)--(i)-2}
\begin{split}
f_{\bi}(H_{k+1})=f_{\bi}(\bigcup_{d\in\Om}f_d(H_k))&=\bigcup_{d\in\Om}f_{\bi d}(H_k)\\
&=\bigcup_{d\in\Om}f_{\bi d}(\De)\cap H_{n+k+1}=f_{\bi}(\bigcup_{d\in\Om}f_d(\De))\cap H_{n+k+1}\\
&=f_{\bi}(\De\setminus H)\cap H_{n+k+1}.
\end{split}
\end{equation}
Since $E_\la\subset \De_{n+1}$, by (v) we get $H_n\cap E_\la=\emptyset$ for all $n\ge 0$. So,   $f_{\bi} (H)$ is a  hole of $E_\la$. By the same argument as in the proof of (iii) $\Rightarrow$ (iv) it gives that $f_{\bi}(H)\cap H_{n+k+1}=\emptyset$. By (\ref{eq:(v)--(i)-2}) this proves (\ref{eq:(v)--(i)-1}) for $m=k+1$. Hence, (\ref{eq:(v)--(i)-1}) follows by induction.

In order to show that $E_\la$ is totally self-similar we need to prove that for any $\bi\in\Om^n$ with $n\ge 0$,
\begin{equation}\label{eq:(v)--(i)-3}
f_{\bi}(\De_m)=f_{\bi}(\De)\cap \De_{n+m}\quad\textrm{for all }m\ge 0.
\end{equation}
This will be done by induction on $m$. Clearly, (\ref{eq:(v)--(i)-3}) holds for $m=0$. Take $k\ge 0$, and we assume (\ref{eq:(v)--(i)-3}) holds for all $m\le k$ and all $\bi\in\Om^n$ with $n\ge 0$. Note that
\[
\De_k\setminus H_k=\bigcup_{\bj\in\Om^k}f_{\bj}(\De)\setminus\bigcup_{\bj\in\Om^k}f_{\bj}(H)=\bigcup_{\bj\in\Om^k}f_{\bj}(\De\setminus H)=\De_{k+1},
\]
where the second equality follows by using that $f_\bi(H)\cap f_\bj(H)=\emptyset$ for any two words $\bi, \bj\in\Om^k$.
Similarly, $\De_{n+k}\setminus H_{n+k}=\De_{n+k+1}$. Then by the induction hypothesis and (\ref{eq:(v)--(i)-1}) it follows that
\begin{align*}
f_{\bi}(\De_{k+1})=f_{\bi}(\De_k\setminus H_k)&=f_{\bi}(\De_k)\setminus f_{\bi}(H_k)\\s
&=(f_{\bi}(\De)\cap\De_{n+k})\setminus(f_{\bi}(\De)\cap H_{n+k})\\
&=f_{\bi}(\De)\cap (\De_{n+k}\setminus H_{n+k})=f_{\bi}(\De)\cap\De_{n+k+1}.
\end{align*}
This proves (\ref{eq:(v)--(i)-3}) for $m=k+1$, and hence (\ref{eq:(v)--(i)-3}) follows by induction.

Note that $\De_m$ decreases to $E_\la$ as $m\ra\f$ and $f_{\bf i}$ is continuous. Letting $m\ra\f$ in (\ref{eq:(v)--(i)-3}) yields that $f_{\bi}(E_\la)=f_{\bi}(\De)\cap E_\la$ for all $\bi\in\Om^*$.
\end{proof}

\begin{remark}\label{rem:totaly-self-similar-characterization}
\mbox{}

\begin{itemize}

\item Proposition \ref{prop:equivalent-condition-totally-self-similar} holds when replacing $E_\la$ by the attractor of a homogeneous IFS; the proof is the same. Besides, the proof also implies the equivalences (i) $\Leftrightarrow$ (ii) $\Leftrightarrow$ (iii)  in a more general setting. To be more precise, let $g_1, \ldots, g_m$ be contractive similitudes in $\R^d$, and let $F$ be the attractor of the IFS $\set{g_i}_{i=1}^m$. Denote by $\Ga$   the convex hull of $F$. Then $F$ is totally self-similar if,  and only if,   for any two words  $\bi, \bj\in\set{1,2,\ldots, m}^*$  we have
\[
f_{\bi}(F)\cap f_{\bj}(F)=f_{\bi}(\Ga)\cap f_{\bj}(F)=f_{\bi}(F)\cap f_{\bj}(\Ga)=f_{\bi}(\Ga)\cap f_{\bj}(\Ga)\cap F.
\]
This is also equivalent to that $f_{\bi}(\Ga\setminus\bigcup_{i=1}^m g_i(\Ga))\cap F=\emptyset$ for any $\bi\in\set{1,2,\ldots, m}^*$.

\item If $E_\la$ contains an interior point (which means that $\la$ is rational and $E_\la$ satisfies the open set condition by \cite{Ken-97} and \cite{Rao-Wen-98}), then $E_\la$ is not totally self-similar. This can be inferred from the following observation.

Suppose $E_\la$ contains an interior point $x$. Then there exists a word $\bi\in\Om^*$  such that $x\in f_{\bi}(\De)\subseteq E_\la$. This implies that $f_{\bi}(H)\cap E_\la=f_{\bi}(H)\ne\emptyset$. Thus  $E_\la$ is not totally self-similar according to  Proposition \ref{prop:equivalent-condition-totally-self-similar} (iii).

\item In general, the strong separation condition does not imply totally self-similar. A counterexample would be the IFS $\left\{x/3,(x+4)/{27},(x+2)/3\right\}$.

Suppose the attractor of the above IFS is $F$, then the convex hull of $F$ is $[0,1]$. It is easy to check that  $F$ is a subset of $\left[0, 1/3\right]\cup \left[ 2/3,1\right]$. Then we have  $F/3$, $(F+4)/{27}$ and $(F+2)/3$ are pairwise disjoint.
Therefore the IFS $\left\{x/3,(x+4)/{27},(x+2)/3\right\}$ satisfies the strong separation condition.
However, combining  Proposition \ref{prop:equivalent-condition-totally-self-similar} (ii) and the fact that
\[\frac{[0,1]}{3}\cap \frac{[0,1]+4}{27}\cap F\supseteq  \frac{F+4}{27}\quad\textrm{and}\quad \frac{F}{3}\cap\frac{F+4}{27}=\emptyset\]
 yields that $F$ is not totally self-similar.
\end{itemize}
\end{remark}

\begin{proof}[Proof of Theorem \ref{th:1}]
First we prove the necessity.  Denote by $\rho_k:=1-3^{-k}$ for $k\ge 0$.  Then the points $\rho_1,\rho_2,\ldots$ form a partition of the unit interval $(0,1)$. Note that $\rho_0=0$ and $\rho_k\nearrow 1$ as $k\rightarrow \infty$. So it suffices to show  that for any $k\ge 0$ the set $E_\la$ is not totally self-similar for  any $\lambda\in(\rho_k, \rho_{k+1})$.

Take $k\ge 0$ and pick $\lambda\in(\rho_k, \rho_{k+1})$.  In view of Proposition \ref{prop:equivalent-condition-totally-self-similar} (iv) it suffices to show that
  \begin{equation}\label{eq:th1-1}
  f_{0 2^k}\ne f_{\lambda 0^k}\quad \textrm{and}\quad
 f_{0 2^k}(H)\cap f_{\lambda 0^k}(\De)\ne\emptyset.
  \end{equation}
Since $\lambda>\rho_k$, a simple calculation yields
\[
f_{0 2^k}(0)= \frac{\rho_k}{3}\ne\frac{\lambda}{3}=f_{\lambda 0^k}(0).
\]
This proves the first statement of (\ref{eq:th1-1}). For the second statement   we observe that
\begin{align*}
f_{0 2^k}(H)&=\frac{\rho_k}{3}+\frac{H}{3^{k+1}}=\left(\frac{\rho_k}{3}+\frac{1+\la}{3^{k+2}}, \frac{\rho_{k+1}}{3}\right),\\
f_{\lambda 0^k}(\De)&= \frac{\lambda}{3}+\frac{\De}{3^{k+1}}=\left[\frac{\la}{3}, \frac{\la}{3}+\frac{1}{3^{k+1}}\right].
\end{align*}
Since $\lambda\in(\rho_k, \rho_{k+1})$,  it follows that
\[
\frac{\rho_k}{3}+\frac{1+\la}{3^{k+2}}<\frac{\la}{3}+\frac{1}{3^{k+1}}\quad\textrm{and}\quad \frac{\rho_{k+1}}{3}>\frac{\la}{3}.
\]
This implies $f_{02^k}(H)\cap f_{\la 0^k}(\De)\ne\emptyset$. So, (\ref{eq:th1-1}) holds, and then the necessity follows.

Now we prove the sufficiency. Let $\la=\rho_m=1-3^{-m}$. By Proposition \ref{prop:equivalent-condition-totally-self-similar} (iv) it suffices to prove that for any $n\in\N$ and for any $\bi, \bj\in\Om^n$ with $f_{\bi}\ne f_{\bj}$ we have
\begin{equation}\label{eq:thm1-2}
f_{\bi}(H)\cap f_{\bj}(\De)=\left(f_{\bi}(0)+\frac{1+\la}{3^{n+1}}, f_{\bi}(0)+\frac{2}{3^{n+1}}\right)\cap \left[f_{\bj}(0), f_{\bj}(0)+\frac{1}{3^n}\right]=\emptyset.
\end{equation}
Clearly, (\ref{eq:thm1-2}) holds if $f_{\bi}(0)+\frac{2}{3^{n+1}}\le f_{\bj}(0)$ or $f_{\bi}(0)+\frac{1+\la}{3^{n+1}}\ge f_{\bj}(0)+\frac{1}{3^n}$. Therefore,   (\ref{eq:thm1-2}) follows once we can prove
\begin{equation}\label{eq:thm1-3}
3^n|f_{\bi}(0)-f_{\bj}(0)|\ge \frac{2}{3}\quad\textrm{for any }\bi, \bj\in\Om^n\textrm{ with }  f_{\bi}\ne f_{\bj};  n\in\N.
\end{equation}

Let $n\in\N$ and take $\bi=i_1\ldots i_n, \bj=j_1\ldots j_n\in\Om^n$ such that $f_{\bi}(0)\ne f_{\bj}(0)$. Note that
\begin{equation}\label{eq:thm1-4}
3^n|f_{\bi}(0)-f_{\bj}(0)|=\left|\sum_{k=1}^n (i_k-j_k)3^{n-k}\right|=:\left|\sum_{k=0}^{n-1} c_k 3^{k}\right|,
\end{equation}
where for each $0\le k<n$,
\[
c_k:=i_{n-k}-j_{n-k}\in\Om-\Om=\set{0,\pm\rho_m, \pm(2-\rho_m), \pm 2}.
\]

We will prove (\ref{eq:thm1-3}) by adopting an idea from \cite{Kom-Lor-Ped-00}. In view of (\ref{eq:thm1-4}) we assume on the contrary to (\ref{eq:thm1-3}) that there exists a sequence of integers  $c_0,c_{1},\cdots,c_{n-1}\in\set{0,\pm\rho_m, \pm(2-\rho_m), \pm 2}$ such that
\begin{equation}\label{2a}
0<\left|\sum_{j=0}^{n-1}c_j3^j\right|<\frac 23.
\end{equation}
Choose the sequence $(c_j)$ such that the sequence
\begin{equation}\label{minimal}
|c_{n-1}|,\cdots,|c_0|\;{\textrm{is lexicographically minimal}}
\end{equation}
among all sequences satisfying (\ref{2a}). Without loss of generality we can assume that  $c_{n-1}>0$.

If $c_{n-1}=2$ or $2-\rho_m$, then (\ref{eq:thm1-3}) follows from (\ref{eq:thm1-4}) and the following calculation:
 \begin{align*}
 \left|\sum_{j=0}^{n-1}c_j 3^j\right|&\ge (2-\rho_m)3^{n-1}-2\sum_{j=0}^{n-2}3^j=1+\frac{3^{n-1}}{3^m}>\frac{2}{3}.
 \end{align*}

It remains to show the case $c_{n-1}=\rho_m$. Observe that if $c_i=\rho_m$ for some $i\ge m$, then at least one of its following $m$ coefficients $c_{i-1},\cdots,c_{i-m}$ should be positive. Otherwise, we could change
$c_i3^i+c_{i-1}3^{i-1}+\cdots+c_{i-m}3^{i-m}$ to $(c_i-\rho_m)3^i+(c_{i-1}+2)3^{i-1}+\cdots+(c_{i-m}+2)3^{i-m}$ by using $\rho_m=\sum_{j=1}^m2\cdot{3^{-j}}$, contradicting  (\ref{minimal}).

Now, writing ${ n}=pm+\ell$ with {$p\ge 0$} and $0\le \ell<m$, then we have
 \begin{align*}
\sum_{j=0}^{n-1}c_j3^j&\ge (\rho_m\cdot 3^{n-1}-2\cdot 3^{n-2}-\cdots-2\cdot 3^{n-m})+(\rho_m\cdot 3^{n-m-1}-2\cdot 3^{n-m-2}-\cdots-2\cdot 3^{n-2m})\\
&+\cdots+(\rho_m\cdot 3^{n-pm-1}-2\cdot 3^{n-pm-2}-\cdots-2\cdot 3^0)\\
&=2\cdot 3^{n-m-1}+2\cdot 3^{n-2m-1}+\cdots+2\cdot 3^{n-pm-1}+2(3^{-1}+
\cdots+3^{\ell-m-1})\ge \frac 23,
 \end{align*}
contradicting (\ref{2a}). { Therefore, this proves (\ref{eq:thm1-3}), and completes the proof.}
\end{proof}

\section{The spectrum of $E_\la$}\label{s:spectrum}


 Recall {from Definition \ref{def:spectrum}} that the spectrum of $E_\la$ is given by $l_\la :=\inf A_\la, $ where
\begin{equation*}
A_\la:=\set{\left|\sum_{i=0}^{n-1}d_i\cdot 3^i\right|\ne 0,\,d_i\in\set{0,\pm\la,\pm (2-\la),\pm 2};~n=1,2,\cdots}.
\end{equation*}
{ Observe that $\set{0,\pm \la, \pm(2-\la), \pm 2}=\Om-\Om$. Then} we can
 rewrite $A_\la$ as
 \begin{equation*}A_\la=\set{3^n|f_{\bi}(0)-f_{\bj}(0)|: \bi, \bj\in\Om^n\textrm{ with }f_{\bi}\ne f_{\bj};~n=1,2,\cdots}.
 \end{equation*}

For any two different words $\bi, \bj$ of the same length $n$, in view of (\ref{eq:fd-similitude}), it is clear that $f_{\bi}=f_{\bj}$ if and only if $f_{\bi}(0)=f_{\bj}(0)$. So, the scaled distance $3^n|f_{\bi}(0)-f_{\bj}(0)|$ describe the closeness of the two maps $f_{\bi}$ and $f_{\bj}$, which turns out to reflect the structure of $E_\la$.

We first  consider some examples where the spectrum $l_\la$ can be explicitly determined. {We need the following result by Rao and Wen \cite{Rao-Wen-98}.
\begin{lemma}(\cite{Rao-Wen-98})
\label{lem:R-W}
Let $\la\in(0, 1)$ and let $q$ be a positive integer. If $d\in\set{1,2,\ldots, q}\cap q\cdot A_\la$, then
\[
g^{n}_{2q}(d)\in q\cdot A_\la
\]
for any $n\ge 1$, where $g_{2q}: \mathbb Z\setminus\set{0}\to\mathbb Z\setminus\set{0}$ is defined by
\begin{equation*}
g_{2q}(x)=\left\{\begin{array}{ll}
\displaystyle\frac{2q+x}{3},&\quad{\textrm{if}}\quad 3|(2q+x),\\
\displaystyle\frac{2q-x}{3},&\quad{\textrm{if}}\quad 3|(2q-x),\\
\displaystyle\frac{x}{3},&\quad{\textrm{if}}\quad 3|x.
\end{array}\right.
\end{equation*}
\end{lemma}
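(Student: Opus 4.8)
The plan is to reduce the statement to a single application of $g_{2q}$ and then recast membership in $q\cdot A_\la$ as a base‑$3$ representability condition. First I would record the elementary fact that $g_{2q}$ preserves the range $\{1,\dots,q\}$: if $x\in\{1,\dots,q\}$ then $x/3\in(0,q/3]$, $(2q-x)/3\in[q/3,2q/3)$ and $(2q+x)/3\in(2q/3,q]$, so whichever branch is selected (exactly one is, since $3\nmid q$, which is also what makes $g_{2q}$ well defined) the output again lies in $\{1,\dots,q\}$. Hence, by induction on $n$, it suffices to prove the one–step statement: if $m\in\{1,\dots,q\}\cap q\cdot A_\la$ then $g_{2q}(m)\in q\cdot A_\la$.

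Next I would translate into digit language. Writing $a:=q\la$ and $D:=q(\Om-\Om)=\{0,\pm a,\pm(2q-a),\pm 2q\}$, the reformulation of $A_\la$ given just above shows that
\[
q\cdot A_\la=\Big\{\,\big|\textstyle\sum_{k=0}^{n-1}e_k3^k\big|\neq 0:\ e_k\in D,\ n\ge 1\,\Big\}.
\]
Two closure properties are immediate and serve as building blocks: this set is invariant under negation, and it is closed under $y\mapsto 3y+e$ for every $e\in D$ (appending a least significant digit). The role of the three branches of $g_{2q}$ is that they invert, up to sign, exactly the three appends with $e\in\{0,2q,-2q\}$: if $m$ admits a representation whose least significant digit $e_0$ is the unique element of $\{0,2q,-2q\}$ congruent to $m\bmod 3$ (these are distinct mod $3$ because $3\nmid q$), then deleting $e_0$ exhibits $g_{2q}(m)=|(m-e_0)/3|$ with digits in $D$, so $g_{2q}(m)\in q\cdot A_\la$.

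The heart of the matter is therefore to show that the reduced value $g_{2q}(m)$ is itself representable. I would phrase this as a reachability statement for the reduction dynamics on the finite symmetric set $\{w\in g\mathbb Z:\ |w|\le q\}$, where $g:=\gcd(a,2q)$: every digit of $D$ is divisible by $g$, and since $3\nmid g$ one checks that each branch of $g_{2q}$ keeps $m$ inside $g\mathbb Z\cap[1,q]$. The claim to establish is that $g\mathbb Z\cap[1,q]\subseteq q\cdot A_\la$; granting it, the one–step statement follows at once since $g_{2q}(m)$ is a positive multiple of $g$ not exceeding $q$. To prove the claim I would reduce each $w$ by repeatedly subtracting a digit $e\equiv w\pmod 3$ and dividing by $3$; the digits $0,\pm 2q$ already form a complete residue system modulo $3$, so a reduction step is always available and keeps $|w|\le q$ (because $(|w|+2q)/3\le q$), confining the orbit to the finite set above.

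The main obstacle is the termination of this reduction, i.e.\ showing that $0$ is actually reached rather than the orbit being trapped in a nonzero cycle. This difficulty is genuine and is precisely where the bound $m\le q$ enters: the reduction is not magnitude–decreasing (for small $|w|$ the residue–forced branch can temporarily increase $|w|$, e.g.\ $1\mapsto 3$), so a naive induction on $|w|$ fails, and the fixed points $\pm q$ of the restricted $\{0,\pm 2q\}$–rule (when they lie in $g\mathbb Z$) must be escaped. I expect to resolve this by allowing the full digit set in the reduction and using the relation $a+(2q-a)=2q$ together with a direct case analysis over the finitely many residue classes, constructing for each $w\in g\mathbb Z\cap[-q,q]$ an explicit finite reduction path to $0$. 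This is the step that requires the careful bookkeeping of Rao and Wen.
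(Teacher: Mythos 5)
Your reduction to the one-step statement is sound: $g_{2q}$ does preserve $\{1,\dots,q\}$, the digit reformulation of $q\cdot A_\la$ is correct, and you have correctly identified that $g_{2q}(m)=|(m-e_0)/3|$ where $e_0$ is the unique element of $\{0,\pm 2q\}$ congruent to $m$ modulo $3$. (The paper itself offers no proof --- the lemma is quoted from Rao and Wen --- so I am comparing your argument with what a complete proof requires.) The problem is that your argument stops exactly where the lemma becomes nontrivial. You replace the statement to be proved by the strictly stronger claim $g\mathbb{Z}\cap[1,q]\subseteq q\cdot A_\la$ with $g=\gcd(q\la,2q)$, and then concede that termination of the reduction dynamics --- that every such $w$ actually reaches $0$ rather than being trapped in a nonzero cycle --- is ``the step that requires the careful bookkeeping of Rao and Wen''. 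A ``direct case analysis over the finitely many residue classes'' cannot close this: the state space $g\mathbb{Z}\cap[-q,q]$ grows with $q$, the cycles of the $\{0,\pm 2q\}$-reduction are not only the fixed points $\pm q$ (already for $q=2$, $p=1$ one has the $2$-cycle $1\to-1\to1$), and you give no mechanism for escaping them uniformly in $\la$ and $q$. As written the proposal is a correct setup plus an unproved combinatorial claim, so it is not a proof. There is also a smaller slip: $\gcd(q\la,2q)$ only makes sense when $q\la$ is an integer, while the lemma is stated for arbitrary $\la\in(0,1)$; you should first note that the hypothesis forces integrality of the relevant representations (and is vacuous for irrational $\la$).

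The gap is avoidable, because the lemma does not require your stronger claim: it suffices to run the recursion backwards. You already observed that $q\cdot A_\la$ is closed under appending a least significant digit, $d\mapsto|3d+e|$ for $e\in q(\Om-\Om)$, and that $\{0,\pm2q\}\subseteq q(\Om-\Om)$. Now note that $g_{2q}$ restricted to $\{1,\dots,q\}$ is a bijection: the three candidate preimages of $y$ are $3y$, $2q-3y$ and $3y-2q$; they lie in distinct residue classes modulo $3$ because $3\nmid 2q$, and exactly one of them lies in $[1,q]$ (according as $y<q/3$, $q/3<y<2q/3$ or $y>2q/3$; equality is excluded since $3\nmid q$). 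That unique preimage equals $|3y+e^{*}|$ for some $e^{*}\in\{0,\pm2q\}$, i.e.\ it is obtained from $y$ by an admissible append, so $y\in\{1,\dots,q\}\cap q\cdot A_\la$ implies $g_{2q}^{-1}(y)\in\{1,\dots,q\}\cap q\cdot A_\la$. Iterating, the whole backward orbit of $d$ lies in $q\cdot A_\la$; and since a bijection of a finite set has only periodic orbits, the forward orbit $\{g_{2q}^{n}(d)\}_{n\ge1}$ coincides with the backward orbit. This proves the lemma directly and bypasses the reachability problem on which your proposal founders.
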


\begin{lemma}\label{spectrum:m=12}
Let $\la=\frac{m\cdot 3^k}q$ be in lowest terms with  $k$ being a non-negative integer, then
\[\frac{1}{q}\le l_\lambda\le\frac mq.\] In particular, for $m=1,2$  we have $l_\lambda=\frac mq$.
\end{lemma}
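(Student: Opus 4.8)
The plan is to clear the denominator $q$ and reduce everything to elementary statements about the set of integers $q\cdot A_\la$. Since $\la=\frac{m\cdot 3^k}{q}$, the scaled digit set is
\[
q\cdot(\Om-\Om)=q\cdot\set{0,\pm\la,\pm(2-\la),\pm 2}=\set{0,\pm m3^k,\pm(2q-m3^k),\pm 2q},
\]
which consists entirely of integers. Hence every element of $q\cdot A_\la$ is the absolute value of a nonzero integer combination $\sum_{j=0}^{n-1}(q c_j)3^j$, so $q\cdot A_\la\subseteq\N$. In particular each such element is at least $1$, which gives at once the lower bound $l_\la=\inf A_\la\ge\frac1q$.

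For the upper bound I would show that $m\in q\cdot A_\la$, equivalently $\frac mq\in A_\la$. When $k=0$ this is immediate, since $m=q\la\in q\cdot A_\la$ because $\la\in A_\la$ (take a single digit word). When $k\ge1$ I would invoke Lemma~\ref{lem:R-W} with the seed $d=m3^k$. This seed is admissible: it lies in $q\cdot A_\la$ (again because $\la\in A_\la$), and $1\le m3^k=q\la<q$, so $d\in\set{1,\ldots,q}\cap q\cdot A_\la$. Because $\gcd(m3^k,q)=1$ together with $k\ge1$ forces $3\nmid q$, the map $g_{2q}$ is well defined along the relevant orbit, and at each value $m3^{\,j}$ with $j\ge1$ only the branch $x\mapsto x/3$ can fire (indeed $m3^{\,j}\equiv0$, so $2q\pm m3^{\,j}\equiv 2q\not\equiv0$ modulo $3$). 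Iterating $k$ times therefore yields the chain
\[
m3^k\mapsto m3^{k-1}\mapsto m3^{k-2}\mapsto\cdots\mapsto m,
\]
so that $g_{2q}^{\,k}(m3^k)=m$, and Lemma~\ref{lem:R-W} gives $m\in q\cdot A_\la$, whence $l_\la\le\frac mq$.

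Combining the two bounds settles the case $m=1$ directly: $\frac1q\le l_\la\le\frac1q$. For $m=2$ the crude lower bound $\frac1q$ is not enough, and I would sharpen it by a parity argument. When $\la=\frac{2\cdot3^k}{q}$ is in lowest terms, $q$ is odd, so every scaled digit in $\set{0,\pm2\cdot3^k,\pm2(q-3^k),\pm2q}$ is even; since each power $3^{\,j}$ is odd, every nonzero element of $q\cdot A_\la$ is even and hence at least $2$. This upgrades the lower bound to $l_\la\ge\frac2q$, matching the upper bound $\frac2q$ already obtained, so $l_\la=\frac2q$.

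The main obstacle is the bookkeeping in the upper-bound step: one must verify that $g_{2q}$ is genuinely well defined along the orbit (i.e.\ that exactly one of its three branches applies, which hinges on $3\nmid q$) and that the branch taken is always $x\mapsto x/3$, so that $k$ iterations collapse $m3^k$ precisely onto $m$ rather than onto some other integer. Everything else — the integrality giving $l_\la\ge\frac1q$, and the parity refinement for $m=2$ — is short once the normalization by $q$ is in place.
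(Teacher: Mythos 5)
Your proof is correct and follows essentially the same route as the paper's: integrality of the scaled set $q\cdot A_\la$ gives the lower bound $\frac1q$, a parity argument on the even digit set $\{0,\pm 2\cdot 3^k,\pm(2q-2\cdot 3^k),\pm 2q\}$ upgrades it to $\frac2q$ when $m=2$, and iterating Lemma~\ref{lem:R-W} on the seed $m3^k=q\la$ yields the upper bound $\frac mq$. Your explicit verification that $3\nmid q$ forces the branch $x\mapsto x/3$ at every step of the orbit is a point the paper passes over silently, so your write-up is if anything slightly more careful.
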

\begin{proof} Suppose $\la=\frac{m\cdot 3^k}{q}=\frac pq<1$  is a rational in  lowest terms, we will first prove that $l_\lambda\le\frac mq$.

The case $k=0$ is obvious if we take $n=1$ and $d_0=\la$ in the definition of $l_\la$. Then it suffices to consider  the case $k\ge1$.
%
%
%
Note that   $p\in \{1,2,\cdots,q\}$ and $p=q\cdot \la\in qA_\la$. Then by Lemma \ref{lem:R-W} it follows that
 \[g_{2q}^k(p)=\frac{p}{3^k}\in qA_\la.\]
 So there exists   $x\in A_\la$ such that
 \[x=\frac{p}{q\cdot 3^k}=\frac{\la}{3^k}=\frac{m}{q}.\] This implies $l_\la\le x=\frac mq$.

On the other hand, note that for any positive integer $n$ and
any words ${\bf i,\bf j}\in \Omega^n$ with $f_{\bf i}(0)\ne f_{\bf j}(0)$, we have
\begin{equation}\label{ge}
3^n\left|f_{\bf i}(0)-f_{\bf j}(0)\right|=\left|\sum_{i=0}^{n-1}d_i\cdot 3^i\right| =\frac 1q\left|\sum_{i=0}^{n-1}\left(q d_i\right)\cdot 3^i\right|,
\end{equation}
with  $q d_i\in q\set{0,\pm\frac{p}{q}, \pm(2-\frac{p}{q}), \pm 2}=\{0,\pm p,\pm 2q,\pm(2q-p)\}$.
It follows that $\left|\sum_{i=0}^{n-1}\left(q d_i\right)\cdot 3^i\right|$ is a positive integer or a positive even integer, depending on whether $m=1$ or $2$. Thus $l_\lambda=\inf\{3^n\left|f_{\bf i}(0)-f_{\bf j}(0)\right|: \bi, \bj\in\Om^n\textrm{ with }f_{\bi}\ne f_{\bj};~n=1,2,\cdots\}
\ge\frac mq$ with $m=1$ or $2$. The proof is finished.
\end{proof}}

In the following lemma we determine $l_\la$ when $E_\la$ is totally self-similar.
\begin{lemma}\label{lem:spectrum-2/3}
Let $\la\in(0,1)$. If $E_\la$ is totally self-similar, then $l_\la=\frac{2}{3}$.
\end{lemma}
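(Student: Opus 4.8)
The plan is to sandwich $l_\la$ between $\frac23$ and $\frac23$, exploiting that the lower bound is already contained in the proof of Theorem \ref{th:1} and that the upper bound follows from a single explicit pair of words. First I would invoke Theorem \ref{th:1}: since $E_\la$ is totally self-similar, $\la=1-3^{-m}$ for some $m\in\N$, so I fix such an $m$ and set $\rho_m:=1-3^{-m}=\la$. Recalling that $l_\la=\inf A_\la$ with
\[
A_\la=\set{3^n|f_{\bi}(0)-f_{\bj}(0)|:\bi,\bj\in\Om^n\textrm{ with }f_{\bi}\ne f_{\bj};~n=1,2,\ldots},
\]
it then suffices to establish $l_\la\ge\frac23$ and $l_\la\le\frac23$ separately.

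For the lower bound there is nothing new to prove: inequality (\ref{eq:thm1-3}), derived in the sufficiency part of Theorem \ref{th:1} precisely for $\la=\rho_m$, asserts that $3^n|f_{\bi}(0)-f_{\bj}(0)|\ge\frac23$ for all $n\in\N$ and all $\bi,\bj\in\Om^n$ with $f_{\bi}\ne f_{\bj}$. Since every element of $A_\la$ has this form, passing to the infimum gives $l_\la\ge\frac23$ immediately.

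For the upper bound I would produce a pair of equal-length words realizing the value $\frac23$, so that $\frac23\in A_\la$. The natural candidate is the ``critical'' pair that appears at the right endpoint $\la=\rho_m$ of the necessity analysis, namely $\bi=\la 0^{m-1}$ and $\bj=02^{m-1}$, both of length $m$. A direct evaluation gives $f_{\bi}(0)=\la/3=\frac13-3^{-m-1}$ and, after summing a finite geometric series, $f_{\bj}(0)=\sum_{k=2}^{m}2\cdot3^{-k}=\frac13-3^{-m}$, so that
\[
3^m\bigl|f_{\bi}(0)-f_{\bj}(0)\bigr|=3^m\bigl(3^{-m}-3^{-m-1}\bigr)=1-\tfrac13=\frac23.
\]
In particular $f_{\bi}(0)\ne f_{\bj}(0)$, hence $f_{\bi}\ne f_{\bj}$ and $\frac23\in A_\la$, giving $l_\la\le\frac23$. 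Combined with the previous step this yields $l_\la=\frac23$.

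I do not anticipate a serious obstacle, since the delicate lower bound is inherited verbatim from Theorem \ref{th:1}; the only genuine choice is identifying the witness pair for the upper bound. Geometrically this choice is forced: for $\la=\rho_m$ the basic hole $H=(\frac{1+\la}3,\frac23)$ has length exactly $3^{-(m+1)}$ and abuts $f_{\la}(\De)$ on its left, so the two closest distinct $m$-level images should come from the boundary words $\la 0^{m-1}$ and $02^{m-1}$ --- exactly the pair $f_{\la 0^k},f_{02^k}$ used in the necessity argument, now evaluated at the endpoint $\la=\rho_{k+1}$ with $k=m-1$.
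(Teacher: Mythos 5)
Your proof is correct and follows essentially the same route as the paper: the lower bound is inherited from inequality (\ref{eq:thm1-3}) in the sufficiency part of Theorem \ref{th:1}, and the upper bound comes from the same witness pair $\bi=02^{m-1}$, $\bj=\la 0^{m-1}$ (the paper merely states that one can verify $3^m|f_{\bi}(0)-f_{\bj}(0)|=\frac{2}{3}$, whereas you carry out the computation explicitly, and your arithmetic checks out).
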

\begin{proof}
Take $\la\in(0, 1)$ such that $E_\la$ is totally self-similar. By Theorem \ref{th:1} there exists $m\in\N$ such that
  $\la=1-3^{-m}$. Then by (\ref{eq:thm1-3}) it follows that   the spectrum
\[
l_\la=\inf\set{3^n|f_{\bi}(0)-f_{\bj}(0)|: \bi, \bj\in\Om^n\textrm{ with } f_{\bi}\ne f_{\bj};~n=1,2,\cdots}\ge \frac{2}{3}.
\]
On the other hand,   take $\bi=02^{m-1}$ and $\bj=\la 0^{m-1}$. Then  one can verify that
\[
3^m|f_{\bi}(0)-f_{\bj}(0)|=\frac{2}{3}.
\]
This proves $l_\la=\frac{2}{3}$.
\end{proof}

Now we prove that $l_\la=\frac{2}{3}$ is also the sufficient condition for  $E_\la$ to be totally self-similar.
\begin{lemma}
\label{lem:spectrum-upper bound}
Let $\la\in(0, 1)$. If $E_\la$ is not totally self-similar, then $0\le l_\la\le\min\set{\la, \frac{1}{2}}$.
\end{lemma}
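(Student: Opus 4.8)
The plan is to prove the two bounds $l_\la\le\la$ and $l_\la\le\tfrac12$ separately; together with $l_\la\ge 0$, which is immediate since $l_\la$ is an infimum of absolute values, these give $0\le l_\la\le\min\set{\la,\tfrac12}$. The bound $l_\la\le\la$ holds for every $\la\in(0,1)$: taking $n=1$ and $d_0=\la$ in Definition \ref{def:spectrum} shows $\la\in A_\la$. Hence the real content is $l_\la\le\tfrac12$, and this is where the hypothesis that $E_\la$ is not totally self-similar is used.

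First I would locate $\la$ among the break points $\rho_m:=1-3^{-m}$. By Theorem \ref{th:1}, if $E_\la$ is not totally self-similar then $\la\ne\rho_m$ for all $m\ge1$; since $\rho_0=0$ and $\rho_k\nearrow 1$, there is a (unique) $k\ge0$ with $\la\in(\rho_k,\rho_{k+1})$. I would then isolate the element
\[
w:=3^k(\la-\rho_k)=\la\cdot 3^k-\sum_{j=0}^{k-1}2\cdot 3^j ,
\]
whose right-hand side exhibits it as $\sum_{i=0}^{k}d_i3^i$ with $d_k=\la$ and $d_0=\cdots=d_{k-1}=-2$, all digits lying in $\Om-\Om=\set{0,\pm\la,\pm(2-\la),\pm 2}$; thus $w\in A_\la$. (This $w$ is precisely the scaled distance $3^{k+1}|f_{02^k}(0)-f_{\la 0^k}(0)|$ witnessing the overlap in the proof of Theorem \ref{th:1}.) Using $\la\in(\rho_k,\rho_{k+1})$ together with $\rho_{k+1}-\rho_k=\tfrac{2}{3^{k+1}}$ one obtains the strict bounds $0<w<\tfrac23$.

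The naive estimate $l_\la\le w<\tfrac23$ is too weak, and the main obstacle is to push the bound below $\tfrac12$. The mechanism I would exploit is that $A_\la\cup\set{0}$ is closed under $x\mapsto 3x+d$ for $d\in\Om-\Om$ (this simply adjoins a new lowest digit): taking $d=-2$ turns $w$ into
\[
3w-2=\la\cdot 3^{k+1}-\sum_{j=0}^{k}2\cdot 3^j=3^{k+1}(\la-\rho_{k+1}),
\]
so $|3w-2|\in A_\la$ as well. I would then conclude by a dichotomy: if $w\le\tfrac12$ then $l_\la\le w\le\tfrac12$; and if $w>\tfrac12$ then $3w>\tfrac32$ forces $2-3w<\tfrac12$, while $w<\tfrac23$ guarantees $2-3w>0$, so $2-3w=|3w-2|$ is a genuine nonzero element of $A_\la$ and $l_\la\le 2-3w<\tfrac12$. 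In either case $l_\la\le\tfrac12$, which finishes the proof. The one point needing care is strictness at the endpoints: both $w>0$ and $2-3w>0$ rely on the \emph{strict} containment $\la\in(\rho_k,\rho_{k+1})$, i.e.\ exactly on $E_\la$ not being totally self-similar; at a break point $\la=\rho_m$ one has $w=\tfrac23$ and the construction degenerates, consistent with $l_{\rho_m}=\tfrac23$ from Lemma \ref{lem:spectrum-2/3}.
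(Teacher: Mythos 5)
Your proof is correct, but it takes a genuinely different and considerably shorter route than the paper's. The paper writes $\la$ in its greedy triadic expansion $((\sla_k))_3$ over $\set{0,\pm2}$ and runs a case analysis on the tail of that expansion (whether it contains zeros, ends in $0^\f$, or avoids zeros entirely), in each case building explicit words $\bi,\bj\in\Om^N$ whose scaled distance equals a tail sum $\left|\sum_{k\ge1}\sla_{N+k-1}3^{-k}\right|$ lying in $(0,\tfrac12]$. You instead use a single witness, $w=3^k(\la-\rho_k)\in(0,\tfrac23)$, which is exactly the overlap quantity $3^{k+1}|f_{02^k}(0)-f_{\la 0^k}(0)|$ from the necessity half of Theorem \ref{th:1}, together with the closure of the set of signed sums under appending a lowest digit, and the elementary dichotomy that $w$ and $2-3w$ cannot both exceed $\tfrac12$ when $0<w<\tfrac23$. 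All the needed strict inequalities ($w>0$ and $2-3w>0$) do follow from $\la\in(\rho_k,\rho_{k+1})$, i.e.\ from Theorem \ref{th:1}, as you note. What the paper's longer argument buys is finer numerical information in the individual cases (upper bounds of $\tfrac13$, $\tfrac49$, $\tfrac{11}{27}$, or the exact value $\tfrac12$, depending on the expansion of $\la$); what yours buys is brevity and a transparent explanation of why $\tfrac12$ is the right threshold. One cosmetic point: the set literally closed under $x\mapsto 3x+d$ is the set of signed sums $\sum_{i}d_i3^i$, not $A_\la$ itself (whose elements are absolute values); your application is nonetheless sound because you exhibit $w$ as a positive signed sum and then pass to $|3w-2|$.
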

\begin{proof}
Clearly, for $\la\in(0, 1)$ we have $l_\la\le \la$ since $3|f_\la(0)-f_0(0)|=\la$.
Let $\la\in(0, 1)$ such that $E_\la$ is not totally self-similar. It suffices to prove that $l_\la\le \frac{1}{2}$.
By Theorem \ref{th:1}   we have $\la\ne \rho_m=1-3^{-m}$ for all $m\in\N$. Write
\[
\la=((\sla_k))_3=\sum_{k=1}^\f\frac{\sla_k}{3^k}
\]
such that $(\sla_k)\in\set{0, \pm 2}^\f$ is the greedy triadic expansion of $\la$.
We distinguish  the following three cases: (A) $(\sla_k)$ contains zeros but does not end with $0^\f$; (B) $(\sla_k)$ ends with $0^\f$; (C) $(\sla_k)$  contains no zeros.

  Case (A). $(\sla_k)$ contains zeros  but does not end  with $0^\f$.  Then there exists $N\in\N$ such that $\sla_N=0$ and $\sla_{N+1}\in\set{-2, 2}$.   Take $\bi=i_1\ldots i_N, \bj=j_1\ldots j_N\in\Om^N$ such that $i_1=\la, j_1=0$, and for $2\le k\le N$,
  \begin{equation}\label{eq:i-j-construction}
  \left\{\begin{array}{lll}
  i_k=0, ~j_k=\sla_{k-1}&\quad\textrm{if}& \sla_{k-1}\in\set{0, 2},\\
i_k=2, ~ j_k=0&\quad\textrm{if}& \sla_{k-1}=-2.
  \end{array}\right.
  \end{equation}
  Then $j_k-i_k=\sla_{k-1}$ for any $2\le k\le N$.
Therefore, by using $\sla_N=0$ and $|\sla_{N+1}|=2$ it follows that
  \begin{align*}
  3^N|f_{\bi}(0)-f_{\bj}(0)|&=3^N\left|\sum_{k=1}^N\frac{i_k-j_k}{3^k}\right|=3^N\left|\frac{\la}{3}-\sum_{k=2}^N\frac{\sla_{k-1}}{3^k}\right|\\
  &=3^{N-1}\left|\sum_{k=1}^\f\frac{\sla_k}{3^k}-\sum_{k=1}^{N-1}\frac{\sla_k}{3^k}\right|\\
  &=\left|\sum_{k=1}^\f\frac{\tilde\la_{N+k-1}}{3^k}\right|\le \sum_{k=2}^\f\frac{2}{3^k}=\frac{1}{3},
  \end{align*}
and
  \[
   3^N|f_{\bi}(0)-f_{\bj}(0)|=\left|\sum_{k=1}^\f\frac{\tilde\la_{N+k-1}}{3^k}\right|\ge \frac{2}{3^2}-\sum_{k=3}^\f\frac{2}{3^k}=\frac{1}{9}>0.
  \]
  This implies $l_\la\le  \frac{1}{3}<\frac{1}{2}$ if the greedy triadic expansion of $\la$ contains zeros  but does not end with $0^\f$.

  Case (B). $(\sla_k)$ ends with $0^\f$. Since $\la\in(0,1)$ and $\la\ne \rho_m$ for any $m\in\N$, this gives $(\sla_k)\ne 2^n 0^\f$ for any $n\in\N$. So there exists $N\in\N$ such that
  \[
  \sla_N\sla_{N+1}\ldots\in\bigcup_{m=1}^\f\set{0(-2)^m 0^\f, 02^m 0^\f, 2(-2)^m 0^\f, (-2)2^m 0^\f}.
  \]

  If $\sla_N\sla_{N+1}\ldots\in\bigcup_{m=1}^\f\set{0(-2)^m 0^\f, 02^m 0^\f}$, then by the same argument as in Case (A) we can prove $l_\la\le 1/3<1/2$. Now suppose $\sla_N\sla_{N+1}\ldots\in\bigcup_{m=1}^\f\set{ 2(-2)^m 0^\f, (-2)2^m 0^\f}$.

  Without loss of generality we assume $\sla_N\sla_{N+1}\ldots=2(-2)^m 0^\f$ for some $m\in\N$. Take $\mathbf i=i_1\ldots i_N, \mathbf j=j_1\ldots j_N\in\Om^N$ such that $i_1=\la, j_1=0$ and the blocks $i_2\ldots i_N, j_2\ldots j_N$
  satisfy (\ref{eq:i-j-construction}). By the same argument as in Case (A) one can show that
  \begin{align*}
  3^N|f_{\mathbf i}(0)-f_{\mathbf j}(0)|=\left|\sum_{k=1}^\f\frac{\sla_{N+k-1}}{3^k}\right|=\frac{2}{3}-\sum_{k=2}^{m+1}\frac{2}{3^k}\subseteq\left[\frac{1}{3}, \frac{4}{9}\right].
  \end{align*}
  This implies $l_\la\le 4/9<1/2$.

  Case (C). $(\sla_k)$  contains no zeros. Then  $(\sla_k)\in\set{-2, 2}^\N$. Since $\la\in(0, 1)$, there exists an integer $n$ such that $\sla_n\ne \sla_{n+1}$. So $\sla_n=-\sla_{n+1}$. Let
  \[
  \mathcal N:=\set{n\in\N: \sla_n=- \sla_{n+1}=- \sla_{n+2}}.
  \]
  We point that the set $\mathcal N$ might be empty, and in this case the sequence $(\sla_k)$ ends with $(2(-2))^\f$.

 \begin{itemize}
 \item  If $\mathcal N\ne \emptyset$, then take $N\in\mathcal N$.  Let $\bi=\la\, i_2\ldots i_N, \bj=0 \,j_2\ldots j_N\in\Om^N$ such that the words $i_2\ldots i_N$ and $j_2\ldots j_N$ satisfy (\ref{eq:i-j-construction}).   Then by a similar argument as in Case (A) it follows that
  \[
   0<3^N|f_{\bi}(0)-f_{\bj}(0)|=\left|\sum_{k=1}^\f\frac{\tilde\la_{N+k-1}}{3^k}\right|\le \frac{2}{3}-\frac{2}{3^2}-\frac{2}{3^3}+\sum_{k=4}^\f\frac{2}{3^k}=\frac{11}{27}<\frac{1}{2}.
  \]

 \item  If $\mathcal N=\emptyset$, then there exits $N\in\N$ such that $\sla_N\sla_{N+1}\ldots=(2(-2))^\f$. Again, let  $\bi=\la\, i_2\ldots i_N, \bj=0 \,j_2\ldots j_N\in\Om^N$ such that the words $i_2\ldots i_N$ and $j_2\ldots j_N$ satisfy (\ref{eq:i-j-construction}). By a similar argument we obtain
 \[
  3^N|f_{\bi}(0)-f_{\bj}(0)|=\left|\sum_{k=1}^\f\frac{\tilde\la_{N+k-1}}{3^k}\right|=2\sum_{k=1}^\f\frac{(-1)^{k+1}}{3^k}=\frac{1}{2}.
 \]

    \end{itemize}

 Hence, for any $\la\in(0, 1)\setminus\set{\la_m: m\in\N}$ we have $l_\la\le \frac{1}{2}$. This completes the proof.
\end{proof}
\begin{remark} The upper bound in Lemma \ref{lem:spectrum-upper bound} is optimal. One can find values of $\la$ such that $l_\la=\min\{\la,\frac 12\}$.  A simple example would be $\la=\frac 12$ (see Lemma \ref{spectrum:m=12}).
\end{remark}

In order to prove Theorem \ref{th:2}, we still need the following results from \cite{Lau-Ngai-99} and \cite{Ngai-Wang-01}.

Let $\Phi=\{\phi_i\}_{i=1}^N$ be a family of contractive similitudes on ${\mathbb R}$ of the form $\phi_i(x)=\rho x+b_i,i=1,\cdots,N$ with $b_1<b_2<\cdots<b_N$. The following definition was first introduced in \cite{Lau-Ngai-99} in a slightly different but equivalent form (see also \cite{Zer-96}).

\begin{definition}We say that $\Phi$ satisfies the {\it weak separation condition} if there exists a constant $C>0$ such that for each positive integer $n$ and any two indices ${\bf i,j}\in
\{1,\cdots,N\}^n$, either $\phi_{\bf i}(0)=\phi_{\bf j}(0)$ or $\rho^{-n}|\phi_{\bf i}(0)-\phi_{\bf j}(0)|\ge C.$
\end{definition}

\begin{remark}\label{rem:weak separation} By the above definition, we have $l_\lambda>0$ if and only if the IFS $\{x/3,(x+\lambda)/3,(x+2)/3\}$ satisfies the weak separation condition.
\end{remark}

The next definition  is adopted from \cite{Fen-03} and is equivalent to the more general definition in \cite{Ngai-Wang-01}.
\begin{definition}\label{def:finite type condition}We say that $\Phi$ satisfies the {\it finite type condition} if there is a finite set $\Gamma$ such that for each positive integer $n$ and  any two indices ${\bf i,j}\in
\{1,\cdots,N\}^n$, either $\rho^{-n}|\phi_{\bf i}(0)-\phi_{\bf j}(0)|>\frac{b_N-b_1}{1-\rho}$ or $\rho^{-n}|\phi_{\bf i}(0)-\phi_{\bf j}(0)|\in\Gamma$.
\end{definition}
It follows from $3|f_\la(0)-f_0(0)|=\la$ and the definition of spectrum that $l_\la\le \la<\frac{2/3-0}{1-1/3}$. So
  if the IFS $\{x/3,(x+\lambda)/3,(x+2)/3\}$ satisfies the finite type condition, then by Definition \ref{def:finite type condition} there exists a finite set $\Gamma$ such that
\begin{eqnarray*}l_\la&=&\inf\set{3^n|f_{\bi}(0)-f_{\bj}(0)|: \bi, \bj\in\Om^n\textrm{ with } f_{\bi}\ne f_{\bj};~n=1,2,\cdots}\\
&=&\inf\set{3^n|f_{\bi}(0)-f_{\bj}(0)|\in \Gamma^{'}:\Gamma^{'}\,\,{\textrm{is a subset of}}\,\,\Gamma},
\end{eqnarray*}
which means $l_\la$ is computable. Thus to prove the first part of   Theorem \ref{th:2} (iii), we need a sufficient condition for the IFS $\{x/3,(x+\lambda)/3,(x+2)/3\}$ to be of finite type condition, which can be deduced directly from    \cite[Theorem 2.5]{Ngai-Wang-01}.

\begin{theorem}[A simplification of   Theorem 2.5 in \cite{Ngai-Wang-01}] \label{ngai-wang}Let $\Phi=\{\phi_i\}_{i=1}^N$ be an IFS in ${\mathbb R}$ having the form $\phi_i(x)=\rho x+b_i,i=1,\cdots,N$ with $b_1<b_2<\cdots<b_N$, where $ 1/\rho$ is a pisot number. Assume that $\{b_1,\cdots,b_N\}\subset r{\mathbb Z}[1/\rho]$ for some real number $r$, then the IFS is of finite type condition.
\end{theorem}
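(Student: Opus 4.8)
The plan is to construct the finite set $\Gamma$ demanded by Definition \ref{def:finite type condition} explicitly, exploiting that $\beta:=1/\rho$ is a Pisot number together with a Northcott-type finiteness principle for algebraic integers with uniformly bounded conjugates; this is a specialization of \cite[Theorem 2.5]{Ngai-Wang-01}, but a short self-contained argument is available.

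First I would record the relevant normal form. For a word $\bi=i_1\cdots i_n$ one has $\phi_{\bi}(0)=\sum_{k=1}^{n}b_{i_k}\rho^{k-1}$, so that for any $\bi,\bj\in\set{1,\ldots,N}^n$,
\[
\rho^{-n}\big(\phi_{\bi}(0)-\phi_{\bj}(0)\big)=\sum_{k=1}^{n}(b_{i_k}-b_{j_k})\,\beta^{\,n+1-k}=\sum_{m=1}^{n}e_m\,\beta^{m},
\]
where each coefficient $e_m$ lies in the finite difference set $D:=\set{b_i-b_j:1\le i,j\le N}$. Since $D\subset r\mathbb Z[1/\rho]=r\mathbb Z[\beta]$ and positive powers of $\beta$ lie in $\mathbb Z[\beta]$, the rescaled quantity $z:=r^{-1}\rho^{-n}\big(\phi_{\bi}(0)-\phi_{\bj}(0)\big)=\sum_{m=1}^{n}(e_m/r)\,\beta^{m}$ is an algebraic integer of the number field $K:=\mathbb Q(\beta)$, whose coefficients $e_m/r$ range over the fixed finite subset $D/r\subset\mathbb Z[\beta]$.

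The heart of the argument is to bound every conjugate of $z$ uniformly in $n,\bi,\bj$ whenever the underlying scaled difference is small. Let $\beta=\beta_1,\beta_2,\ldots,\beta_d$ be the conjugates of the Pisot number $\beta$, so $|\beta_i|<1$ for $i\ge2$, and let $\sigma_1,\ldots,\sigma_d$ be the corresponding embeddings of $K$. For the identity embedding, $|z|=|r|^{-1}\rho^{-n}|\phi_{\bi}(0)-\phi_{\bj}(0)|$, which is at most $L:=\frac{b_N-b_1}{|r|(1-\rho)}$ precisely in the case we must control. For $i\ge2$, applying $\sigma_i$ termwise and using $|\beta_i|<1$ gives
\[
|\sigma_i(z)|\le\Big(\max_{e\in D/r}|\sigma_i(e)|\Big)\sum_{m=1}^{\infty}|\beta_i|^{m}=:M_i<\infty,
\]
a bound independent of $n$. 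Hence all conjugates of $z$ are bounded by $M:=\max\set{L,M_2,\ldots,M_d}$.

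Finally I would invoke finiteness. Since $z$ is an algebraic integer of the fixed field $K$ of degree $d$ with all conjugates bounded by $M$, its field polynomial $\prod_{i=1}^{d}\big(X-\sigma_i(z)\big)\in\mathbb Z[X]$ has coefficients bounded in terms of $M$ and $d$; thus only finitely many such $z$, and hence only finitely many scaled differences $\rho^{-n}|\phi_{\bi}(0)-\phi_{\bj}(0)|\le\frac{b_N-b_1}{1-\rho}$, can occur. Taking $\Gamma$ to be this finite set of small scaled differences, every pair $\bi,\bj$ either produces a scaled difference exceeding $\frac{b_N-b_1}{1-\rho}$ or one lying in $\Gamma$, which is exactly the finite type condition. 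The main obstacle is the middle step: securing the uniform-in-$n$ bound on the subdominant conjugates, which is exactly where the Pisot hypothesis $|\beta_i|<1$ is indispensable — without it the geometric series $\sum_m|\beta_i|^m$ would diverge and the conjugates could grow with $n$, destroying the finiteness.
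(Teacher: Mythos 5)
Your argument is correct. Note first that the paper itself supplies no proof of Theorem \ref{ngai-wang}: it is imported wholesale as a special case of Theorem 2.5 of \cite{Ngai-Wang-01}, so there is no internal proof to compare against. What you have written is essentially a self-contained reconstruction of the mechanism behind that cited result, namely the classical Pisot separation argument going back to Garsia: every small scaled difference $r^{-1}\rho^{-n}\bigl(\phi_{\bi}(0)-\phi_{\bj}(0)\bigr)$ is an algebraic integer of the fixed field $\mathbb Q(\beta)$ whose Galois conjugates are uniformly bounded (the dominant embedding by the hypothesis that the difference is at most $\frac{b_N-b_1}{1-\rho}$, the subdominant ones by the geometric series in $|\beta_i|<1$), and a Northcott-type count of integer polynomials of bounded degree and height then forces finiteness. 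All the steps check out: the normal form $\sum_{m=1}^n e_m\beta^m$ with $e_m$ in the finite set $D$ is right, $D/r\subset\mathbb Z[\beta]$ ensures the quantity really is an algebraic integer, and $r\ne 0$ follows from $b_1<b_N$. The one thing worth making explicit is that $\prod_{i=1}^{d}\bigl(X-\sigma_i(z)\bigr)$ is the field polynomial (a power of the minimal polynomial), which lies in $\mathbb Z[X]$ precisely because $z$ is an algebraic integer of $K$; with that remark your proof is complete and, unlike the paper, does not require the reader to consult \cite{Ngai-Wang-01}.
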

 Applying Theorem \ref{ngai-wang} yields the following lemma.
 \begin{lemma}\label{computable}If $\la$ is rational, then the spectrum $l_\la$ is computable.
 \end{lemma}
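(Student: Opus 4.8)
The plan is to reduce the statement to an application of Theorem \ref{ngai-wang}: I will check that the IFS $\{x/3,(x+\la)/3,(x+2)/3\}$ satisfies the finite type condition whenever $\la$ is rational, and then quote the observation made just before Theorem \ref{ngai-wang}, namely that under the finite type condition the infimum defining $l_\la$ collapses to the minimum of a finite set of scaled distances and is therefore computable. So the whole task is to verify the three hypotheses of Theorem \ref{ngai-wang}.

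First I would read off the parameters. Writing each similitude in the form $\phi_i(x)=\rho x+b_i$, we have $\rho=1/3$ and translation parts $b_1=0$, $b_2=\la/3$, $b_3=2/3$; since $0<\la<2$ these satisfy $b_1<b_2<b_3$, as required. The condition that $1/\rho=3$ be a Pisot number is automatic, every integer $\ge 2$ being Pisot. The one genuinely substantive point is the containment $\{b_1,b_2,b_3\}\subset r\,\mathbb{Z}[1/\rho]$ for a suitable real $r$. Here $\mathbb{Z}[1/\rho]=\mathbb{Z}[3]=\mathbb{Z}$, so it suffices to produce $r$ with $\{0,\la/3,2/3\}\subset r\mathbb{Z}$. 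Writing $\la=p/q$ in lowest terms and taking $r=1/(3q)$, one gets $0=0\cdot r$, $\la/3=p\cdot r$ and $2/3=2q\cdot r$, so all three translations lie in $r\mathbb{Z}$. This is precisely the step where rationality of $\la$ enters: if $\la$ were irrational no common $r$ could exist (otherwise $\la/2=(\la/3)/(2/3)$ would be a ratio of two integers), which is consistent with the vanishing of $l_\la$ in that case.

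With all hypotheses in place, Theorem \ref{ngai-wang} yields the finite type condition, producing a finite set $\Gamma$ as in Definition \ref{def:finite type condition}. The final conclusion is then immediate from the discussion preceding Theorem \ref{ngai-wang}: since $l_\la\le\la<\tfrac{b_3-b_1}{1-\rho}=1$, the infimum defining $l_\la$ is attained among scaled distances lying strictly below this threshold, and by the finite type condition every such value belongs to $\Gamma$. Hence $l_\la$ equals the minimum of a finite, explicitly describable set and is computable. I do not expect a real obstacle here; the entire content is the choice of $r=1/(3q)$ clearing the denominator of $\la$, and the only point requiring care is confirming that the threshold $1$ strictly exceeds $l_\la$ so that the infimum cannot escape the finite set $\Gamma$.
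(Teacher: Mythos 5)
Your proposal is correct and follows exactly the route the paper intends: verify the hypotheses of Theorem \ref{ngai-wang} (with $\rho=1/3$, $b_1=0$, $b_2=\la/3$, $b_3=2/3$, and $r=1/(3q)$ clearing the denominator of $\la=p/q$) to get the finite type condition, then invoke the discussion preceding that theorem, using $l_\la\le\la<1=\frac{b_3-b_1}{1-\rho}$, to collapse the infimum to a minimum over a subset of the finite set $\Gamma$. The paper's own proof is just the one-line assertion that Theorem \ref{ngai-wang} applies, so you have in fact supplied the details it leaves implicit.
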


Now we are ready to prove Theorem \ref{th:2}.
\begin{proof}[Proof of Theorem \ref{th:2}]
By Lemmas \ref{spectrum:m=12}, \ref{lem:spectrum-2/3} ,\ref{lem:spectrum-upper bound}  and \ref{computable}, it suffices to prove (i).
 If $\lambda$ is rational, suppose $\lambda$ is a rational of the form $\frac pq$ in lowest terms. Then by Lemma \ref{spectrum:m=12} it follows that  $l_\lambda\ge \frac 1q>0$.

Conversely, suppose $\lambda$ is irrational. Then  B. Solomyak and P. Shmerkin proved in Theorem 1.6 of \cite{Hoc-14}  that $\dim_HE_\lambda=1$.
Suppose on the contrary that  $l_\lambda>0$. Then by Remark \ref{rem:weak separation} it follows that  the IFS $\{x/3,(x+\lambda)/3,(x+2)/3\}$ satisfy the weak separation condition. Note that $\dim_HE_\lambda=1$, using Theorem 3 of
Zerner \cite{Zer-96}  yields that $E_\lambda$ contains  interior points. However, this can only happen when $\lambda$ is rational by  Lemma 4  in \cite{Ken-97}. Therefore, $l_\la=0$, and we completes   the proof of (i).
\end{proof}

\section{Generating IFSs for $E_\la$}\label{s:generating IFS}
In this section we will discuss all generating IFSs for $E_\lambda$ when $E_\lambda$ is totally self-similar, and prove Theorem \ref{th:3}.
Fix $\la\in(0, 1)$ such that $E_\la$ is totally self-similar. Then by Theorem \ref{th:1} it follows that $\la=1-3^{-m}$ for some $m\in\N$. From now on we fix this $\la=1-3^{-m}$.

Let $g$ be an affine map on $\R$, write
\[
g(x):=\mu x+b\quad\textrm{with}\quad |\mu |>0 \textrm{ and }b\in\R.
\]
Accordingly, denote by $g(E_\la)=\mu E_\la+b:=\set{\mu x+b: x\in E_\la}$.
Suppose $g(E_\la)\subseteq E_\la$. We are going to show that $g$ must be of the form $g=f_{\mathbf i}$ for some $\mathbf i\in\Om^*$.

Since the diameters of $g(E_\la)$ and $E_\la$ are $|\mu|$ and $1$ respectively, by using $g(E_\la)\subseteq E_\la$ it follows that $|\mu|\le 1$. If $|\mu|=1$, i.e., $\mu=\pm 1$, then by using the asymmetry of $E_\la$ (see Figure \ref{fig:1}) we conclude that $\mu=1$ and $b=0$. So, $g=f_{\epsilon}$ is the identity map. Excluding this trivial case, in the following we assume $0<|\mu|<1$.
Since the smallest and the largest elements of $E_\la$ are $0$ and $1$, respectively, it follows that
\begin{equation}\label{eq:b}
\left\{\begin{array}{lll}
0\le b\le 1-\mu&\quad\textrm{if}\quad& \mu\in(0,1),\\
-\mu\le b\le 1&\quad\textrm{if}\quad & \mu\in(-1, 0).
\end{array}\right.
\end{equation}

We first introduce two lemmas that are frequently used in the proof of Theorem \ref{th:3}.  The first one follows from the geometrical structure of $E_\la$ (see Figure \ref{fig:1}).

\begin{figure}[H]
  \centering
  \includegraphics[width=8cm]{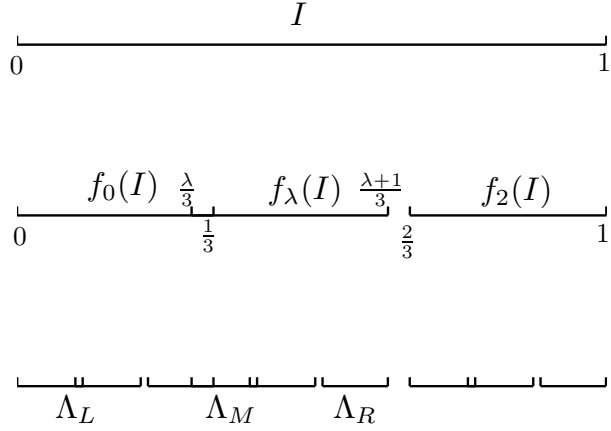}\\
  \caption{The first three  level basic intervals of $E_\la$ with $\la=1-3^{-2}=8/9$. Here $\La_L:=f_{00}(\De)\cup f_{0\la}(\De)$, $\La_M:=f_{02}(\De)\cup f_{\la 0}(\De)\cup f_{\la\la}(\De)$ and $\La_R:=f_{\la 2}(\De)$. Then the diameters of $\La_L, \La_M$ and $\La_R$ are $\frac{\la+1}{9}, \frac{4\la-1}{9}$ and $\frac{1}{9}$, respectively.}\label{fig:1}
\end{figure}

\begin{lemma}
\label{lem:geometric structure-1}
If $g(E_\la) \subseteq E_\la$, then either   $g(E_\la)\subseteq f_0(E_\la)\cup f_\la(E_\la)$ or $g(E_\la)\subseteq f_2(E_\la)$.
\end{lemma}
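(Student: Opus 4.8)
The plan is to exploit the fact that, under total self-similarity, $E_\la$ splits into a left block and a right block separated by the largest possible hole $H$, and that a \emph{strictly} contracted affine copy $g(E_\la)$ cannot reproduce a hole that large. Throughout I work under the standing assumption $0<|\mu|<1$ recorded just above the lemma, the case $\mu=\pm 1$ having already been disposed of.

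First I would record the geometry of the two blocks. Since $\la=1-3^{-m}$, the hole $H=\left(\frac{1+\la}{3},\frac23\right)$ has length $\frac{1-\la}{3}=3^{-(m+1)}$, and by Proposition \ref{prop:equivalent-condition-totally-self-similar}(iii) we have $H\cap E_\la=\emptyset$. Writing $L:=f_0(E_\la)\cup f_\la(E_\la)=E_\la\cap\left[0,\frac{1+\la}{3}\right]$ and $R:=f_2(E_\la)=E_\la\cap\left[\frac23,1\right]$, we obtain $E_\la=L\cup R$ with $L$ and $R$ separated by the open gap $H$; note that $\frac{1+\la}{3}=f_\la(1)\in E_\la$ and $\frac23=f_2(0)\in E_\la$, so the endpoints of $H$ lie in $E_\la$.

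Next I would control the size of \emph{every} hole of $E_\la$. Because $E_\la$ is totally self-similar, Proposition \ref{prop:equivalent-condition-totally-self-similar}(iii) gives $H_n\cap E_\la=\emptyset$ for all $n$, the intervals $f_{\bi}(H)$ are pairwise disjoint, and the endpoints of each $f_{\bi}(H)$ lie in $E_\la$. Hence $\De\setminus E_\la$ is the disjoint union of the open intervals $f_{\bi}(H)$, $\bi\in\Om^*$, so these are precisely the holes of $E_\la$, and the hole $f_{\bi}(H)$ with $\bi\in\Om^n$ has length $3^{-n}\cdot 3^{-(m+1)}\le 3^{-(m+1)}$. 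Thus $H$ is the largest hole of $E_\la$. Applying the affine map $g$, the holes of $g(E_\la)=\mu E_\la+b$ are exactly the images $g(f_{\bi}(H))$, each of length $|\mu|\cdot 3^{-n}\cdot 3^{-(m+1)}\le |\mu|\cdot 3^{-(m+1)}<3^{-(m+1)}$, the strict inequality coming from $|\mu|<1$.

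Finally I would derive the dichotomy by contradiction. Suppose $g(E_\la)$ meets both blocks, i.e. $g(E_\la)\cap L\ne\emptyset$ and $g(E_\la)\cap R\ne\emptyset$. Put $a:=\sup\bigl(g(E_\la)\cap L\bigr)$ and $c:=\inf\bigl(g(E_\la)\cap R\bigr)$; then $a\le\frac{1+\la}{3}<\frac23\le c$, and by compactness $a,c\in g(E_\la)$. Since $g(E_\la)\subseteq E_\la=L\cup R$, no point of $g(E_\la)$ lies in $(a,c)$, so $(a,c)$ is a hole of $g(E_\la)$ of length $c-a\ge \frac23-\frac{1+\la}{3}=3^{-(m+1)}$, contradicting the bound of the previous step. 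Therefore $g(E_\la)$ is disjoint from one of $L,R$, and being contained in $E_\la=L\cup R$ it lies entirely in the other, which is the claim. The step I expect to be the crux is identifying the holes of $E_\la$ and pinning down $H$ as the largest one; everything else is bookkeeping with the factor $|\mu|<1$. The point to watch is that total self-similarity is genuinely used here: without it $f_{\bi}(H)$ need not be a hole (as the $\la=1/3$ example in the text shows), and the clean description of $\De\setminus E_\la$ would fail.
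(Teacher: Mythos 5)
Your proof is correct, but it takes a genuinely different route from the paper's. The paper argues by contradiction as you do, but instead of measuring gap lengths it augments the IFS: if $g(E_\la)$ met both sides of the hole $H$, then the convex hull $g([0,1])$ would cover $H$, so $f_0([0,1])\cup f_\la([0,1])\cup f_2([0,1])\cup g([0,1])=[0,1]$, while $f_0(E_\la)\cup f_\la(E_\la)\cup f_2(E_\la)\cup g(E_\la)=E_\la$; thus both $[0,1]$ and $E_\la$ would be the attractor of the contractive IFS $\set{f_0,f_\la,f_2,g}$, contradicting uniqueness of attractors since $E_\la\ne[0,1]$. That argument is shorter and, notably, does not use total self-similarity at all --- it only needs $H\ne\emptyset$, so it proves the dichotomy for every $\la\in(0,1)$ once $0<|\mu|<1$. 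Your argument, by contrast, genuinely needs total self-similarity (via Proposition \ref{prop:equivalent-condition-totally-self-similar}) to identify $[0,1]\setminus E_\la$ with the disjoint union of the intervals $f_{\bi}(H)$ and hence to certify that $H$ is the largest hole, of length $3^{-(m+1)}$; the strict contraction $|\mu|<1$ then forbids $g(E_\la)$ from straddling a gap that wide. What your route buys is a rigorous proof of the ``largest gap'' facts that the paper otherwise reads off from Figure \ref{fig:1} and reuses in Lemmas \ref{lem:step1-1}, \ref{|mu|<1/3} and \ref{|mu|=1/3}, together with the quantitative bound $|\mu|\cdot 3^{-(m+1)}$ on every hole of $g(E_\la)$; what the paper's route buys is brevity and independence from the totally self-similar hypothesis. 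One small point of bookkeeping in both proofs: the dichotomy requires showing that $g(E_\la)$ cannot meet $f_2(E_\la)$ and \emph{either} of $f_0(E_\la)$, $f_\la(E_\la)$ simultaneously (the paper's displayed contradiction hypothesis mentions only $f_\la$), but your formulation in terms of the two blocks $L$ and $R$ handles this cleanly.
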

\begin{proof}
In view of  Figure \ref{fig:1} it suffices to prove that $g(E_\la)$ can not intersect both $f_{\la}(E_\la)$ and $f_2(E_\la)$. Suppose  on the contrary that
\begin{equation}\label{eq:0407-1}
g(E_\la)\cap f_{\la}(E_\la)\ne\emptyset\quad\textrm{and}\quad g(E_\la)\cap f_2(E_\la)\ne\emptyset.
\end{equation}
Then both $E_\lambda$ and $[0,1]$ are the attractor  of the IFS $\left\{f_0,f_\la,f_2,g\right\}$, so they should be the same, leading to a contradiction.
\end{proof}

For a non-empty compact set $A\subseteq \R$ we denote by $A_{\min}$ and $A_{\max}$ the   smallest  and the largest elements of $A$, respectively.
\begin{lemma}\label{lem:gemometric structure-2}
Suppose $A\subseteq E_\la$ and  $g(A)  \subseteq E_\la$.
\begin{enumerate}
\item[{\rm{(i)}}]  If $g(A)_{\max}\le \frac{1}{3}$, then $g(A)\subseteq f_0(E_\la)$;
\item[{\rm{(ii)}}] If $\frac \lambda3\le g(A)_{\min}\le \frac{1+\la}{3}$, then
$g(A)\subseteq f_\la(E_\la)$;
\item[{\rm{(iii)}}] If $g(A)_{\min}>\frac{1+\la}{3}$, then $g(A)\subseteq f_2(E_\la)$.
\end{enumerate}
\end{lemma}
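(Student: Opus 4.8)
The plan is to reduce each of the three cases to two structural facts supplied by total self-similarity of $E_\la$: first, that $f_d(E_\la)=f_d(\De)\cap E_\la$ for every $d\in\Om$ (Definition \ref{def:totally-self-similar}), so that
\[
f_0(E_\la)=[0,\tfrac13]\cap E_\la,\qquad f_\la(E_\la)=[\tfrac{\la}{3},\tfrac{1+\la}{3}]\cap E_\la,\qquad f_2(E_\la)=[\tfrac23,1]\cap E_\la;
\]
and second, that the central hole is disjoint from the set, i.e. $H\cap E_\la=(\tfrac{1+\la}{3},\tfrac23)\cap E_\la=\emptyset$, which is Proposition \ref{prop:equivalent-condition-totally-self-similar}(iii) with $n=0$. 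Throughout I use that $g(A)$ is a nonempty compact subset of $[0,1]$ contained in $E_\la$, so in particular $g(A)_{\min}\ge 0$ and $g(A)_{\max}\le 1$.

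For (i), the hypothesis $g(A)_{\max}\le\frac13$ together with $g(A)_{\min}\ge 0$ places $g(A)$ inside $[0,\frac13]$; since $g(A)\subseteq E_\la$, this immediately gives $g(A)\subseteq[0,\frac13]\cap E_\la=f_0(E_\la)$. For (iii), the hypothesis $g(A)_{\min}>\frac{1+\la}{3}$ puts $g(A)$ inside $(\frac{1+\la}{3},1]$; but because $H\cap E_\la=\emptyset$, no point of $E_\la$ lies in $(\frac{1+\la}{3},\frac23)$, so in fact $g(A)\subseteq[\frac23,1]\cap E_\la=f_2(E_\la)$. Both of these are one-line consequences of the two structural facts and require nothing further.

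The substantive case is (ii), and here I would invoke the running hypothesis of the section, $g(E_\la)\subseteq E_\la$, through the dichotomy of Lemma \ref{lem:geometric structure-1}: either $g(E_\la)\subseteq f_0(E_\la)\cup f_\la(E_\la)$ or $g(E_\la)\subseteq f_2(E_\la)$. The second alternative is ruled out by the hypothesis $g(A)_{\min}\le\frac{1+\la}{3}$: if $g(E_\la)\subseteq f_2(E_\la)$, then every point of $g(A)\subseteq g(E_\la)$ would be at least $f_2(0)=\frac23>\frac{1+\la}{3}$, contradicting the upper bound on $g(A)_{\min}$. Hence the first alternative holds, and since $f_0(E_\la)\cup f_\la(E_\la)\subseteq[0,\frac{1+\la}{3}]$ we obtain $g(A)_{\max}\le\frac{1+\la}{3}$. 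Combining this with the lower bound $g(A)_{\min}\ge\frac\la3$ forces $g(A)\subseteq[\frac\la3,\frac{1+\la}{3}]\cap E_\la=f_\la(E_\la)$, as desired.

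The main obstacle is precisely the upper control in (ii). The inclusion $g(A)\subseteq f_\la(E_\la)$ is genuinely false if one assumes only $g(A)\subseteq E_\la$ for an arbitrary affine $g$ and arbitrary subset $A$ (for instance one can send two points of $E_\la$ to $\frac\la3$ and to $1$ by a suitable affine map, so that $g(A)_{\min}=\frac\la3$ yet $g(A)\not\subseteq f_\la(E_\la)$), so the argument must draw on the standing assumption $g(E_\la)\subseteq E_\la$ via Lemma \ref{lem:geometric structure-1}. Cases (i) and (iii), by contrast, are purely local and use only total self-similarity.
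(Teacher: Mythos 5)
Your proof is correct and follows essentially the same route as the paper's: parts (i) and (iii) come down to the level-one identity $f_d(E_\la)=f_d(\De)\cap E_\la$ (your direct appeal to Definition~\ref{def:totally-self-similar} is just a streamlined form of the paper's use of Proposition~\ref{prop:equivalent-condition-totally-self-similar}(ii)), and part (ii) is handled exactly as in the paper by the dichotomy of Lemma~\ref{lem:geometric structure-1}, ruling out the $f_2$ branch and extracting the upper bound $g(A)_{\max}\le\frac{1+\la}{3}$ from the inclusion $g(E_\la)\subseteq f_0(E_\la)\cup f_\la(E_\la)$. Your closing observation that (ii), unlike (i) and (iii), genuinely requires the section's standing hypothesis $g(E_\la)\subseteq E_\la$ is accurate and matches how the lemma is actually invoked later in the paper.
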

\begin{proof}
Suppose
$A\subseteq E_\la$ and $g(A)_{\max}\le \frac 13$. Since  $E_\la$ is totally self-similar, by Proposition \ref{prop:equivalent-condition-totally-self-similar} (ii)  it follows that
\begin{align*}
g(A)&\subseteq (f_0(E_\la)\cup f_{\la}(E_\la))\cap f_{0}(\De)\\
&=(f_0(E_\la)\cap f_0(\De))\cup (f_{\la}(E_\la)\cap f_0(\De))=f_0(E_\la)\cup (f_{0}(E_\la)\cap f_{\la}(\De))=f_0(E_\la),
\end{align*}
This proves (i).
For (ii) we note by Lemma \ref{lem:geometric structure-1} that $g(A)\subseteq (f_0(E_\la)\cup f_\la(E_\la))\cap f_\la(\De)$. Then
by Proposition \ref{prop:equivalent-condition-totally-self-similar} (ii) it follows that
\[
g(A)\subseteq (f_0(E_\la)\cap f_\la(E_\la))\cup f_\la(\De)=(f_0(\De)\cap f_\la(E_\la))\cup f_\la(E_\la)=f_\la(E_\la).
\]
This establishes (ii).
Finally, in view of  Figure \ref{fig:1}, (iii) follows from Lemma \ref{lem:geometric structure-1}.
\end{proof}

Let $g$ be an affine map with $g(E_\la)=\mu E_\la+b\subseteq E_\la$.
First we show that the contraction ratio  $\mu$ must be of the form $3^{-n}$ for some positive integer $n$.

\begin{proposition}\label{prop:positive}
 If $g(E_\la)=\mu E_\la+b\subseteq E_\la$ with $0<|\mu|<1$, then $\mu=3^{-n}$ for some $n\in\N$.
\end{proposition}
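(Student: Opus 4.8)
The plan is to isolate a single geometric claim and then deduce the proposition from it by rescaling. The claim I would prove is:

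\medskip
\noindent$(\star)$\; \emph{If $h$ is an affine map with contraction ratio $\nu$, $0<|\nu|<1$, and $h(E_\la)\subseteq E_\la$, then $h(E_\la)\subseteq f_d(E_\la)$ for some $d\in\Om$.}
\medskip

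Granting $(\star)$, the proposition follows by a bootstrap. Since $h(E_\la)\subseteq f_d(E_\la)$ forces $\mathrm{diam}\,h(E_\la)\le\frac13$, any $h$ as in $(\star)$ has $|\nu|\le\frac13$, and $f_d^{-1}\circ h$ maps $E_\la$ into $E_\la$ with ratio $3\nu$. Applying this repeatedly to $g$, I would form $g,\,f_{d_1}^{-1}g,\,f_{d_2}^{-1}f_{d_1}^{-1}g,\dots$, the $k$-th map having ratio $3^{k}\mu$; whenever $3^{k}|\mu|<1$ the claim gives $3^{k}|\mu|\le\frac13$. Taking $k$ to be the largest index with $3^{k}|\mu|<1$ yields $3^{k}|\mu|\le\frac13$ and $3^{k+1}|\mu|\ge1$, hence $3^{k+1}|\mu|=1$ and $|\mu|=3^{-n}$ with $n=k+1$. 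Composing the $n$ inverse maps gives an affine map of ratio $3^{n}\mu=\pm1$ sending $E_\la$ into $E_\la$; by the asymmetry argument already used for the case $|\mu|=1$, such a map must be the identity, so $3^{n}\mu=1$ and $\mu=3^{-n}>0$.

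Thus everything rests on $(\star)$. By Lemma \ref{lem:geometric structure-1}, either $h(E_\la)\subseteq f_2(E_\la)$ (done) or $h(E_\la)\subseteq f_0(E_\la)\cup f_\la(E_\la)$. In the latter case Lemma \ref{lem:gemometric structure-2}(i) puts $h(E_\la)$ in $f_0(E_\la)$ when $\max h(E_\la)\le\frac13$, and (ii) puts it in $f_\la(E_\la)$ when $\min h(E_\la)\ge\frac\la3$ (note $\max h(E_\la)\le\frac{1+\la}3$ automatically here). The only surviving, and main, obstacle is the \emph{straddling} configuration $\min h(E_\la)<\frac\la3$ and $\max h(E_\la)>\frac13$, where $h(E_\la)$ reaches strictly below and strictly above the overlap $f_0(\De)\cap f_\la(\De)=[\frac\la3,\frac13]$.

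To exclude small-ratio straddles I would use the holes of $E_\la$. Since $\la=1-3^{-m}$ we have $\la=f_2^{m}(0)$ and $f_2^{m-1}(2/3)=\la$, so $\frac\la3=f_0(\la)$ is the right endpoint of the hole $f_{02^{m-1}}(H)$ of $E_\la$, an interval of length $3^{-2m-1}$ lying immediately to the left of $\frac\la3$. A straddling $h$ has its minimum in $E_\la$ and below $\frac\la3$, hence at or below the left end of this hole, so $J:=h^{-1}\big(f_{02^{m-1}}(H)\big)\subseteq[0,1]$ is an interval of length $3^{-2m-1}/|\nu|$. If $|\nu|<3^{-m}$ this length exceeds $|H|=3^{-m-1}$, whence $J$ cannot sit inside any hole of $E_\la$, so $E_\la\cap J\neq\emptyset$ and $h$ carries a point of $E_\la$ into the hole $f_{02^{m-1}}(H)$, contradicting $h(E_\la)\subseteq E_\la$ together with $H_n\cap E_\la=\emptyset$ from Proposition \ref{prop:equivalent-condition-totally-self-similar}(iii). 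The hard part, which I expect to be the technical heart of the argument, is the large-ratio range $|\nu|\ge3^{-m}$, where $J$ is short enough to hide inside the big hole $H$. Here I would run a descent: applying Lemma \ref{lem:gemometric structure-2} to the three sub-pieces $h(f_0(E_\la)),\,h(f_\la(E_\la)),\,h(f_2(E_\la))$ shows in a straddle that $h(f_2(E_\la))\subseteq f_\la(E_\la)$ while $h(f_0(E_\la))\cup h(f_\la(E_\la))\subseteq f_0(E_\la)$; rescaling by $f_\la^{-1}$ or $f_0^{-1}$ produces a new affine map of the \emph{same} ratio $\nu$ sending $E_\la$ into $E_\la$, and tracking the extreme points $h(0),h(1)$ under this descent should force termination in a configuration already ruled out, completing $(\star)$.
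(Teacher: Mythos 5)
Your reduction of the proposition to the claim $(\star)$, and the bootstrap extracting $\mu=3^{-n}$ from $(\star)$ together with the asymmetry of $E_\la$, are sound; your hole--counting argument for straddles with $|\nu|<3^{-m}$ (pulling the hole $f_{02^{m-1}}(H)$ back to an interval of length $3^{-2m-1}/|\nu|>|H|$ inside $[0,1]$, which must then meet $E_\la$) is correct and is genuinely different from anything in the paper. But there is a real gap exactly where you say you ``expect the technical heart'' to be: the straddling case with $|\nu|\ge 3^{-m}$ is not a residual case --- it contains essentially the entire difficulty of the proposition. For every $|\nu|\in(\frac13,1)$ the set $h(E_\la)$ has diameter $|\nu|>\frac13$ and is \emph{forced} to straddle (it cannot fit in any $f_d(E_\la)$), so $(\star)$ in that range is precisely the assertion that no such $h$ exists; this is the paper's Lemma \ref{lem:step1-1}, whose proof takes over a page and needs a separate, delicate treatment of $\mu<0$ with $b+\mu=0$ (locating $\mu E_\la-\mu$ inside $\La_M$ and deriving $\la\ge 1$). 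Likewise $\nu=-\frac13$ must be excluded by an explicit computation with a point such as $(2\la 0^{m-1}2^\f)_3$ landing in a hole (the paper's Lemma \ref{|mu|=1/3}); nothing in your sketch produces such an argument.

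The descent you propose for this range does not get off the ground as described. Lemma \ref{lem:gemometric structure-2} applies to a sub-piece $h(f_d(E_\la))$ only when that sub-piece satisfies one of its three hypotheses; for large $|\nu|$ a sub-piece can itself straddle the overlap $[\frac{\la}{3},\frac13]$, and then the lemma says nothing, so the asserted configuration $h(f_2(E_\la))\subseteq f_\la(E_\la)$ and $h(f_0(E_\la))\cup h(f_\la(E_\la))\subseteq f_0(E_\la)$ is not forced. Moreover, even where the descent step is available, conjugation by $f_0^{-1}$ or $f_\la^{-1}$ yields a new map of the \emph{same} ratio $\nu$, so there is no decreasing quantity and no reason the process terminates in an excluded configuration; ``tracking the extreme points'' is exactly the missing argument, not a routine verification. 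The paper instead avoids $(\star)$: it kills $|\mu|\in(\frac13,1)$ outright (Lemma \ref{lem:step1-1}), shows $0<|\mu|<\frac13$ can always be rescaled by $3$ (Lemma \ref{|mu|<1/3}, whose case $g(E_\la)\subseteq\La_M$ is resolved by iterating two distinct expanding maps $h_1,h_2$ and noting one orbit escapes to infinity --- an idea absent from your plan), and pins down $|\mu|=\frac13$ separately (Lemma \ref{|mu|=1/3}). Until you supply a complete argument for straddles with $|\nu|\ge 3^{-m}$, the proposal is an outline rather than a proof.
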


The proof of Proposition \ref{prop:positive} is divided into several lemmas. First, we deduce from Lemma \ref{lem:gemometric structure-2} the following.

\begin{lemma}\label{lem:negative-1}
 Suppose $g(E_\la)=\mu E_\la+b\subseteq f_0(E_\la)\cup f_\la(E_\la)$.
\begin{enumerate}[{\rm(i)}]
\item If $\mu>0$, then
\begin{align*}
b\le \frac{1-\mu}3\quad&\Longrightarrow\quad \mu E_\la+3b\subseteq E_\la;\\
b\ge \frac{1-\mu}3 \quad&\Longrightarrow\quad \mu E_\la+3b+2\mu-\la\subseteq E_\la\textrm{ and }\mu E_\la+3b+\la\mu-\la\subseteq E_\la.
\end{align*}
\item  If $\mu<0$, then
\begin{align*}
b\le \frac{\la-2\mu}3\quad&\Longrightarrow\quad \mu E_\la+3b+2\mu\subseteq E_\la;\\
 b\ge \frac{\la-2\mu}3\quad &\Longrightarrow\quad \mu E_\la+3b-\la\subseteq E_\la\textrm{ and }\mu E_\la+3b+\la\mu-\la\subseteq E_\la.
 \end{align*}
\end{enumerate}
\end{lemma}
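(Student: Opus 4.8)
The plan is to run the self-similar decomposition on the \emph{image} of $g$ and then ``magnify'' the relevant piece by an inverse branch $f_0^{-1}(x)=3x$ or $f_\la^{-1}(x)=3x-\la$, which turns a sub-copy of $E_\la$ of ratio $\mu/3$ back into one of ratio $\mu$ lying inside $E_\la$. Writing $g(x)=\mu x+b$, the three children of $g(E_\la)$ are
\[
gf_0(E_\la)=\tfrac{\mu}{3}E_\la+b,\quad gf_\la(E_\la)=\tfrac{\mu}{3}E_\la+\tfrac{\mu\la}{3}+b,\quad gf_2(E_\la)=\tfrac{\mu}{3}E_\la+\tfrac{2\mu}{3}+b,
\]
each a translate $\tfrac{\mu}{3}E_\la+c_d$ contained in $f_0(E_\la)\cup f_\la(E_\la)\subseteq E_\la$. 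The identities $f_0^{-1}(gf_d(E_\la))=\mu E_\la+3c_d$ and $f_\la^{-1}(gf_d(E_\la))=\mu E_\la+3c_d-\la$ reproduce \emph{exactly} the translates appearing in the statement. So it suffices, for each child and each regime of $b$, to decide whether that child sits in $f_0(E_\la)$ or in $f_\la(E_\la)$, and then to apply the corresponding inverse branch.

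The decision is delivered by Lemma \ref{lem:gemometric structure-2}. Consider first $\mu>0$, where the children are ordered $gf_0,gf_\la,gf_2$ from left to right. A direct computation gives $(gf_0(E_\la))_{\max}=\tfrac{\mu}{3}+b$, so the threshold $b\le\tfrac{1-\mu}{3}$ is \emph{precisely} the condition $(gf_0(E_\la))_{\max}\le\tfrac13$; Lemma \ref{lem:gemometric structure-2}(i) then gives $gf_0(E_\la)\subseteq f_0(E_\la)$, and applying $f_0^{-1}$ yields the first conclusion $\mu E_\la+3b\subseteq E_\la$. In the complementary range $b\ge\tfrac{1-\mu}{3}$ I would use the containment $g(E_\la)\subseteq f_0(E_\la)\cup f_\la(E_\la)$, which forces $b\le\tfrac{1+\la}{3}-\mu$, together with the hypothesis on $b$, to check that the minima $(gf_\la(E_\la))_{\min}=\tfrac{\mu\la}{3}+b$ and $(gf_2(E_\la))_{\min}=\tfrac{2\mu}{3}+b$ both fall in $[\tfrac\la3,\tfrac{1+\la}{3}]$; Lemma \ref{lem:gemometric structure-2}(ii) then places both children in $f_\la(E_\la)$, and $f_\la^{-1}$ produces $\mu E_\la+3b+\la\mu-\la\subseteq E_\la$ and $\mu E_\la+3b+2\mu-\la\subseteq E_\la$. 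Here the two breakpoints required by the second implication are $\tfrac{\la(1-\mu)}{3}$ and $\tfrac{\la-2\mu}{3}$, both of which are $\le\tfrac{1-\mu}{3}$, so the single threshold $\tfrac{1-\mu}{3}$ partitions the $b$-axis with no gap.

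For $\mu<0$ the map reverses orientation, so the children are ordered $gf_2,gf_\la,gf_0$ from left to right and the roles of maxima and minima interchange. The same three steps carry over: one computes $(gf_2(E_\la))_{\max}=\tfrac{2\mu}{3}+b$, uses the breakpoint $\tfrac{\la-2\mu}{3}$ to separate the case $gf_2(E_\la)\subseteq f_0(E_\la)$ (giving $\mu E_\la+3b+2\mu\subseteq E_\la$ via $f_0^{-1}$) from the case $gf_0(E_\la),gf_\la(E_\la)\subseteq f_\la(E_\la)$ (giving $\mu E_\la+3b-\la\subseteq E_\la$ and $\mu E_\la+3b+\mu\la-\la\subseteq E_\la$ via $f_\la^{-1}$), again unzooming as prescribed.

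The step I expect to be most delicate is the placement of the children in the orientation-reversing case near the threshold. Unlike the $\mu>0$ case, for $\mu<0$ the lower bound needed for the rightmost child $gf_0(E_\la)$ to lie in $f_\la(E_\la)$, namely $(gf_0(E_\la))_{\min}=\tfrac{\mu}{3}+b\ge\tfrac{\la}{3}$, is \emph{not} automatically implied by the stated breakpoint $\tfrac{\la-2\mu}{3}$, so the clean partition of the $\mu>0$ case breaks down. The crux is therefore to rule out a child \emph{straddling} the overlap boundary $\tfrac\la3$ of $f_0(E_\la)$ and $f_\la(E_\la)$. I would handle this by exploiting total self-similarity through Proposition \ref{prop:equivalent-condition-totally-self-similar}(ii): on the common region one has $f_0(E_\la)\cap f_\la(\De)=f_0(E_\la)\cap f_\la(E_\la)$, so a child that reaches into $[\tfrac\la3,\tfrac13]$ can be reassigned to $f_\la(E_\la)$, and combining this with the two-sided bound on $b$ forced by the \emph{set} containment $g(E_\la)\subseteq f_0(E_\la)\cup f_\la(E_\la)$ (not merely its convex hull) should eliminate the apparently ambiguous sub-regime and close every case.
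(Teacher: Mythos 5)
Your proposal is correct and takes essentially the same route as the paper: the paper proves (i) exactly as you do, computing $(gf_0(E_\la))_{\max}=\tfrac{\mu}{3}+b$ and $(gf_\la(E_\la))_{\min}=\tfrac{\mu\la}{3}+b\ge\tfrac{\la+(1-\la)(1-\mu)}{3}>\tfrac{\la}{3}$, invoking Lemma \ref{lem:gemometric structure-2} to place each child of $g(E_\la)$ in $f_0(E_\la)$ or $f_\la(E_\la)$, and pulling back by the inverse branch, and it then dismisses (ii) as ``almost the same.'' The one difficulty you flag in the $\mu<0$ case is in fact not there: $b\ge\tfrac{\la-2\mu}{3}$ gives $(gf_0(E_\la))_{\min}=\tfrac{\mu}{3}+b\ge\tfrac{\la-\mu}{3}>\tfrac{\la}{3}$ precisely because $\mu<0$, so Lemma \ref{lem:gemometric structure-2}(ii) applies directly to both $gf_0(E_\la)$ and $gf_\la(E_\la)$ and no straddling or reassignment argument is needed.
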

\begin{proof} We only prove (i), the proof of (ii) is almost the same.
If $b\le  \frac {1-\mu}3$, then
\[g(f_0(E_\la))_{\max}=g\left(\frac{E_\la}{3}\right)_{\max}=\frac{\mu}{3}+b\le \frac{1}{3}.\]
By Lemma \ref{lem:gemometric structure-2} (i) we have $g(f_0(E_\la))\subseteq f_0(E_\la)$, i.e.,
\[
\mu\frac{E_\la}{3}+b\subseteq \frac{E_\la}{3},
\]
which gives $\mu E_\la+3b\subseteq E_\la$. This proves (i).

For the second part of (i), if $b\ge  \frac{1-\mu}3$, then
\begin{align*}
g(f_2(E_\la))_{\min}>g(f_\la(E_\la))_{\min}&=g\left(\frac{E_\la+\la}{3}\right)_{\min}=\mu\frac{\la}{3}+b\\
&\ge \frac{\lambda\mu+1-\mu}{3}=\frac{\la+(1-\la)(1-\mu)}{3}>\frac{\la}{3}.
\end{align*}
Note that $g(E_\la)\subseteq f_0(E_\la)\cup f_\la(E_\la)$. So by Lemma \ref{lem:gemometric structure-2} (ii) it follows that $g(f_\la(E_\la))\subseteq f_\la(E_\la)$ and $g(f_2(E_\la))\subseteq f_\la(E_\la)$. Then
\[
\mu\frac{E_\la+\la}{3}+b\subseteq \frac{E_\la+\la}{3}\quad\textrm{and}\quad \mu\frac{E_\la+2}{3}+b\subseteq \frac{E_\la+\la}{3}.
\]
Therefore,  $\mu E_\la+3b+\mu\la-\la\subseteq E_\la$ and $\mu E_\la+3b+2\mu-\la\subseteq E_\la$. This establishes (i).
\end{proof}

\begin{lemma}\label{lem:step1-1}
If $g(E_\la)=\mu E_\la+b\subseteq E_\la$, then $0<|\mu|\le 1/3$.
\end{lemma}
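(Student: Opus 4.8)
The plan is to combine the dichotomy of Lemma~\ref{lem:geometric structure-1} with a count of hole lengths, thereby reducing to a single stubborn configuration that the renormalization of Lemma~\ref{lem:negative-1} is designed to dispose of. By Lemma~\ref{lem:geometric structure-1}, either $g(E_\la)\subseteq f_2(E_\la)$ or $g(E_\la)\subseteq f_0(E_\la)\cup f_\la(E_\la)$. The first case is immediate, since $\operatorname{diam} g(E_\la)=|\mu|$ and $\operatorname{diam} f_2(E_\la)=\tfrac13$ force $|\mu|\le\tfrac13$. So the entire content lies in the second case, where I may moreover assume $|\mu|\ge\tfrac13$ (otherwise there is nothing to prove).

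The key structural observation is a gap count. Since $g(E_\la)=\mu E_\la+b$ is a similar copy of $E_\la$ at ratio $|\mu|$, its holes are the $g$-images of the holes of $E_\la$, and hence have lengths $|\mu|$ times those of $E_\la$. As $\la=1-3^{-m}$ and $E_\la$ is totally self-similar, the holes of $E_\la$ are the sets $f_{\bi}(H)$, $\bi\in\Om^*$, of length $3^{-(|\bi|+m+1)}$; in particular $H$ is the unique largest hole (length $3^{-(m+1)}$), the next largest have length $3^{-(m+2)}$, and no hole length lies strictly between. Consequently, for $|\mu|\in[\tfrac13,1)$ the copy $g(E_\la)$ has exactly one hole of length $\ge 3^{-(m+2)}$, namely $g(H)$.

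Now I exploit the level-two block structure of $E_\la\cap(f_0(\De)\cup f_\la(\De))$, namely the three blocks $\La_L,\La_M,\La_R$ of Figure~\ref{fig:1}, separated by the holes $f_0(H)$ and $f_\la(H)$ of length $3^{-(m+2)}$. Since $g(E_\la)$ has a single hole of length $\ge 3^{-(m+2)}$, it can meet at most two adjacent blocks, and if it meets two then $g(H)$ must be the hole straddling their common gap. The gap $g(H)$ splits $g(E_\la)$ into the subcopy $g(f_0(E_\la)\cup f_\la(E_\la))$ of diameter $|\mu|\tfrac{\la+1}{3}$ and the subcopy $g(f_2(E_\la))$ of diameter $\tfrac{|\mu|}{3}$. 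Matching these against $\operatorname{diam}\La_L=\tfrac{\la+1}{9}$, $\operatorname{diam}\La_M=\tfrac{4\la-1}{9}<\tfrac13$, $\operatorname{diam}\La_R=\tfrac19$ settles every configuration but one: if $g(E_\la)$ lies in a single block its diameter is already $<\tfrac13$; and if $g$ is orientation preserving, or if $g(H)$ straddles $f_\la(H)$, a single comparison (the large subcopy into $\La_L$, respectively the small subcopy into $\La_R$) yields $|\mu|\le\tfrac13$.

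The main obstacle is the remaining configuration: $g$ orientation reversing ($\mu<0$) with $g(H)$ straddling $f_0(H)$. Here the large subcopy $g(f_0(E_\la)\cup f_\la(E_\la))$ lands to the right of $g(H)$, inside the wide region $\La_M\cup\La_R$, and the crude comparison is too weak. The gap count shows this subcopy cannot straddle $f_\la(H)$, so it lies in $\La_M$ alone (the case $\La_R$ closing as before), whence $g(E_\la)\subseteq\La_L\cup\La_M$, whose convex hull is shorter than that of $f_0(\De)\cup f_\la(\De)$ by the definite factor $\tfrac{4\la+1}{3(\la+1)}<1$. I would resolve this either by iterating the decomposition — the enclosing convex hull then shrinks geometrically while $\operatorname{diam} g(E_\la)=|\mu|$ stays fixed, a contradiction after finitely many steps — or, more cleanly, by invoking Lemma~\ref{lem:negative-1}, which returns a map $\tilde g(x)=\mu x+\tilde b$ with $\tilde g(E_\la)\subseteq E_\la$ and the same $|\mu|$. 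Since $|\mu|>\tfrac13$ forbids the $f_2$ alternative at every stage, the induced translation recursion $b\mapsto 3b+c$ is expanding with values confined to the bounded range of~\eqref{eq:b}; showing that such an orbit cannot persist — using that the admissible configurations form a finite set, by the finite type property established in Section~\ref{s:spectrum} — is the crux of the argument.
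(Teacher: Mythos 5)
Your gap-counting reduction is correct as far as it goes, and it disposes of most configurations more transparently than the paper does (the paper runs the renormalization machinery of Lemma~\ref{lem:negative-1} from the start, whereas you only need a diameter comparison once you know that $g(H)$ is the unique hole of $g(E_\la)$ of length $\ge 3^{-(m+2)}$ and hence must cover whichever of $f_0(H)$, $f_\la(H)$ separates the blocks that $g(E_\la)$ meets). But the proof is not complete: the one configuration you isolate as ``the main obstacle'' --- $\mu<0$ with $g(f_2(E_\la))\subseteq\La_L$ and $g(f_0(E_\la)\cup f_\la(E_\la))\subseteq\La_M$, which only yields $|\mu|\le\frac{4\la-1}{3(1+\la)}$, a bound that exceeds $\frac13$ for every $m\ge 2$ --- is exactly where the real content of the lemma lives, and neither of your two proposed resolutions is carried out or would close it as stated.

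Concretely: your first suggestion (iterate the decomposition so that the enclosing convex hull shrinks geometrically) is not justified, because $E_\la\cap(\La_L\cup\La_M)$ is not a rescaled copy of $E_\la\cap(f_0(\De)\cup f_\la(\De))$; the block/gap structure at the next level involves the overlaps $f_{02}(E_\la)\cap f_{\la 0}(E_\la)$ etc., and you would have to redo the hole census there, which is precisely the work being avoided. Your second suggestion is essentially the paper's route: Lemma~\ref{lem:negative-1} gives an expanding recursion $b\mapsto 3b+2\mu$ (for $\mu<0$), and boundedness of the orbit forces $b$ to be the fixed point $b=-\mu$. But the fixed point is a perfectly admissible bounded orbit, so neither boundedness nor the finite-type property of Section~\ref{s:spectrum} rules it out --- the finite type condition concerns the set of scaled differences $3^n|f_{\bi}(0)-f_{\bj}(0)|$ and says nothing about whether a persistent periodic configuration of $b$ can occur. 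The paper must, and does, treat $b+\mu=0$ by a separate geometric argument: it establishes the containments \eqref{LM} forcing both $|\mu|\ge\frac12$ and $|\mu|\cdot\frac{\la+1}{3}\le\frac{4\la-1}{9}$, which together give $\la\ge 1$, a contradiction. Some argument of this kind (a quantitative incompatibility at the fixed point) is missing from your proposal, and without it the lemma is not proved.
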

\begin{proof}We split the proof into two cases: $b=0$ and $b\ne 0$. The former is simpler than the latter.

Case 1. $b=0$.
Since $1\in E_\la$, we have $\mu=\mu\cdot 1\in E_\la\subseteq [0,1]$. Note that $0<|\mu|<1$. Then $\mu\in(0,1)$. Combining Lemma \ref{lem:geometric structure-1} and $g(E_\la)_{\min}=0$ yields $g(E_\la)\subseteq f_0(E_\la)\cup f_\la(E_\la)$. We will prove that $\mu\in (0,1/3]$.

In view of Figure \ref{fig:1},  the unique largest gap in $E_\la$ is between the basic intervals $f_\la(\De)$ and $f_2(\De)$, and it has length $L:=(1-\la)/{3}$.  Therefore,  the largest gap in $f_0(E_\la)\cup f_\la(E_\la)$ has length $L/3$.  By using $\mu\in(0,1)$ this implies that the largest gap in $g(f_0(E_\la)\cup f_\la(E_\la))=\mu(f_0(E_\la)\cup f_\la(E_\la))$ has length strictly smaller than $L/3$. Observe  that $g(f_0(E_\la)\cup f_\la(E_\la))_{\min}=0$ and  the gap between the two basic intervals $f_{0\la}(\De)$ and $f_{02}(\De)$  has length $L/3$. Hence,
\begin{equation}\label{eq:propb0-1}
g(f_0(E_\la)\cup f_\la(E_\la))\subseteq f_{00}(E_\la)\cup f_{0\la}(E_\la)=f_0(f_0(E_\la)\cup f_\la(E_\la)).
\end{equation}
Since $g(x)=\mu x$ with $\mu\in(0,1)$ and $f_0(x)=x/3$, by (\ref{eq:propb0-1}) it follows that
$\mu\in(0,1/3]$.

Case 2. $b\ne 0$.
Suppose on the contrary that $|\mu|\in(1/3, 1)$. Since $g(E_\la)=\mu E_\la+b\subseteq E_\la$,  by Lemma \ref{lem:geometric structure-1} it follows that   $g(E_\la)\subseteq f_0(E_\la)\cup f_\la(E_\la)$. We split two cases: $\mu>0$ and $\mu<0$.

(i) $\mu>0$.
We claim that
\begin{equation}\label{eq:412-0}
3b<1-\mu.
\end{equation}

If $3b\ge 1-\mu$, then  by Lemma \ref{lem:negative-1} (i) it follows that
$
\mu E_\la+3b+2\mu-\la\subseteq E_\la.
$
Observe that
\[
(\mu E_\la+3b+2\mu-\la)_{\min}=3b+2\mu-\la\ge 1+\mu-\la\ge \mu>\frac{1}{3}>\frac{\la}{3}.
\]
Also note that $g(E_\la)\subseteq f_0(E_\la)\cup f_\la(E_\la)$. Then by   Lemma \ref{lem:gemometric structure-2}   it follows that
\[
\mu E_\la+3b+2\mu-\la\subseteq f_\la(E_\la).
\]
This implies that the contraction ratio $\mu$ is less than or equal to $1/3$, leading to a contradiction with our hypothesis $\mu\in(1/3,1)$. This proves the claim.

By (\ref{eq:412-0}) and Lemma \ref{lem:negative-1} (i) it follows that
$\mu E_\la+3b\subseteq E_\la.$ By the same argument as above with $b$ replaced by $3b$ one can prove that $3^2 b<{1-\mu}$. Again, by Lemma \ref{lem:negative-1} (i) we can deduce that $\mu E_\la+3^2b\subseteq E_\la$.
Iterating the above argument infinitely gives that
\[\mu E_\la+3^p b\subseteq E_\la\quad\textrm{for all }p\in\N.\]
This is impossible as $b\ne 0$. Then $|\mu|\in(0, 1/3]$ for the case $b\ne 0$ and $\mu>0$.

(ii) $\mu<0$.
Applying a similar proof as in Case 2(i) shows that
\[\mu E_\la+3^p (b+\mu)-\mu\subseteq E_\la\quad\textrm{for all }p\in\N.\]
If $b+\mu\ne 0$, then $3^p(b+\mu)-\mu$ tends to infinity as $p\to\infty$, leading to a contradiction. We will deny the case $b+\mu=0$ either.

Suppose $b+\mu=0$. It follows from $0=\mu\cdot 1-\mu \in \mu E_{\la}-\mu$ and Lemma \ref{lem:geometric structure-1} that $\mu E_{\la}-\mu\subseteq f_0(E_\la)\cup f_\la(E_\la)=\Lambda_L
\cup \Lambda_M\cup \Lambda_R$.
%
We claim that
\begin{equation}\label{LM}
f_0(E_\la)=\mu\cdot\frac {E_\la}3-\mu\subseteq\Lambda_M\quad{\textrm{and}}\quad f_\la(E_\la)=\mu\cdot\frac {E_\la+\la}3-\mu\subseteq\Lambda_M.
 \end{equation}

Observe that  the largest gap in $\mu\cdot\frac {E_\la}3-\mu$ and $\mu\cdot\frac {E_\la+\la}3-\mu$  are both of length $|\mu|\cdot\frac L3<\frac L3$. Furthermore,    the distances between $\Lambda_L$ and $\Lambda_M$, between $\Lambda_M$ and $\Lambda_R$ are both of length $\frac L3$. So   each one of $\mu\cdot\frac {E_\la}3-\mu$ and $\mu\cdot\frac {E_\la+\la}3-\mu$ must belong to   one of $\Lambda_L,\Lambda_M$ and $\Lambda_R$.

Suppose  on the contrary to (\ref{LM}) that $\mu\cdot\frac {E_\la}3-\mu$ is a subset of either $\Lambda_L$ or $\Lambda_R$. In the former case, we have $\mu E_{\la}-\mu\subseteq \Lambda_L$ as $\mu<0$. However, this is impossible as the length of $\mu E_{\la}-\mu$ is $|\mu|>\frac 13$ while the length of $\Lambda_L$ is $\frac{\lambda+1}9<\frac 13$. The latter case can not happen either as the length of $\mu\cdot\frac {E_\la}3-\mu$ is  $\frac{|\mu|}{3}>\frac 19$ while the length of $\Lambda_R$ is $\frac 19$. This proves the first inclusion in (\ref{LM}).

For the second inclusion,
it follows from $\mu\cdot\frac {E_\la}3-\mu\subseteq\Lambda_M$ and $\mu<0$ that  $\mu\cdot\frac {E_\la+\la}3-\mu$ is a subset of either $\Lambda_L$ or $\Lambda_M$.
Suppose the former case happens, then the length of the largest gap of $\mu\left(\frac {E_\la}3\cup \frac {E_\la+\la}3\right)-\mu$ should be at least the length of the distance between $\Lambda_L$ and $\Lambda_M$, that is, $\mu\cdot\frac L3\ge \frac L3$, leading to a contradiction.  Therefore, we establish (\ref{LM}).

Note that the left end point of $\Lambda_M$ is $\frac 29$ and the length of $\Lambda_M$ is $\frac{4\la-1}{9}$. By (\ref{LM}) we have $\left(\mu\cdot\frac {E_\la+\la}3-\mu\right)_{\min}=\mu\cdot\frac{1+\la}3-\mu\ge  \frac{2}{9} $. This together with $\la=1-3^{-m}\ge \frac 23$ yields $|\mu|=-\mu\ge \frac 12$. Again by (\ref{LM}), the length of $\frac {E_\la}3\cup \frac {E_\la+\la}3$ should be no more than that of $\Lambda_M$. Therefore we have $\frac{\la+1}6\le |\mu|\cdot\frac{\la+1}3\le |\Lambda_M|=\frac{4\lambda-1}{9} $. This gives $\la\ge 1$, leading to a contradiction.
\end{proof}

Suppose $g(E_\la)$ is a subset of $E_\la$, then by Lemma \ref{lem:geometric structure-1}  we have $0<|\mu|\le\frac 13$. There are two distinct cases depending  on whether $0<|\mu|<\frac 13$ or $|\mu|=\frac 13$, each requiring a separate argument.

\begin{lemma}\label{|mu|<1/3}
If $g(E_\la)=\mu E_\la+b\subseteq E_\la$ and $0<|\mu|<\frac 13$, then $3\mu E_\la+c\subseteq E_\la$ for some $c\in \R$.
\end{lemma}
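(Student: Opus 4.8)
The plan is to show that $g(E_\la)$ lies inside a single first-level cell $f_d(E_\la)$, $d\in\Om$; granting this, $f_d^{-1}\circ g$ is affine with ratio $3\mu$ and maps $E_\la$ into $E_\la$, so $3\mu E_\la+(3b-d)\subseteq E_\la$ and we take $c=3b-d$. The whole argument is insensitive to the sign of $\mu$, since it uses only the set $g(E_\la)$ and its extreme points $g(E_\la)_{\min},g(E_\la)_{\max}$; for $\mu<0$ it produces an orientation-reversing map $3\mu E_\la+c$, which is still admissible.

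First I would run the easy reductions. By Lemma \ref{lem:geometric structure-1}, either $g(E_\la)\subseteq f_2(E_\la)$, in which case we are done with $d=2$, or $g(E_\la)\subseteq f_0(E_\la)\cup f_\la(E_\la)$. In the latter case I apply Lemma \ref{lem:gemometric structure-2} with $A=E_\la$: if $g(E_\la)_{\max}\le\frac13$ then $g(E_\la)\subseteq f_0(E_\la)$ ($d=0$), while if $g(E_\la)_{\min}\ge\frac\la3$ then, using $g(E_\la)_{\max}\le\frac{1+\la}3$, we get $g(E_\la)\subseteq f_\la(E_\la)$ ($d=\la$). The only surviving possibility is the \emph{straddling case} $g(E_\la)_{\min}<\frac\la3$ and $g(E_\la)_{\max}>\frac13$, in which $g(E_\la)$ meets both $f_0(E_\la)\setminus f_\la(E_\la)$ and $f_\la(E_\la)\setminus f_0(E_\la)$ and crosses the overlap interval $[\frac\la3,\frac13]$. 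Here $g(E_\la)$ is contained in none of the three cells, so the task is to prove that this case cannot occur.

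Ruling out the straddling case is the heart of the matter, and the main obstacle. I would use two tools. First, since $g(E_\la)=\mu E_\la+b$ is an exact similar copy of $E_\la$, its largest gap has length $|\mu|L$ with $L=\frac{1-\la}3=3^{-(m+1)}$, so $|\mu|<\frac13$ gives $|\mu|L<\frac L3$; moreover, because $g(E_\la)\subseteq E_\la$, every hole of $E_\la$ lying in the span $(g(E_\la)_{\min},g(E_\la)_{\max})$ is contained in a gap of $g(E_\la)$ and hence has length at most $|\mu|L$. Applying this to the two length-$\frac L3$ holes $f_0(H)$ and $f_\la(H)$ flanking the middle cluster shows the span can contain neither, and since the span crosses $[\frac\la3,\frac13]\subseteq\Lambda_M$ while $\Lambda_L$ tops out below $\frac13$ and $\Lambda_R$ starts above $\frac\la3$, we conclude $g(E_\la)\subseteq\Lambda_M$. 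Second, I would exploit the key structural identity that the overlap is itself a copy of $E_\la$ filling the whole interval $[\frac\la3,\frac13]$: using $\la=1-3^{-m}$ one computes $f_{02^m}(x)=f_{\la 0^m}(x)=Lx+\frac\la3$, whence $f_0(E_\la)\cap f_\la(E_\la)=f_{02^m}(E_\la)=f_{\la 0^m}(E_\la)=E_\la\cap[\tfrac\la3,\tfrac13]$.

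With these in hand I would force a contradiction. The hole of $E_\la$ immediately below $\frac\la3$ is $f_{02^{m-1}}(H)$, of length $3^{-(2m+1)}$, ending exactly at $\frac\la3$; as the span reaches below $\frac\la3$ it must contain this hole, giving $|\mu|L\ge 3^{-(2m+1)}$, that is $|\mu|\ge 3^{-m}$. To upgrade this to the required contradiction $|\mu|\ge\frac13$, I would iterate using the self-similarity of the overlap: because $f_0(E_\la)\cap f_\la(E_\la)$ is a scaled copy of $E_\la$ occupying $[\frac\la3,\frac13]$, the straddling configuration reproduces itself inside the overlap at scale $L$, and confronting $g(E_\la)$ with the successive holes $f_{02}(H),f_{02^2}(H),\dots$ accumulating at $\frac\la3$ from below (and the companion holes $f_{\la 0}(H),\dots$ above $\frac13$) yields a descent incompatible with the fixed ratio $|\mu|\in(L,\tfrac13)$. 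I expect this recursive step—pinning down exactly how far $g(E_\la)_{\min}$ is forced to descend and which hole it must then bridge—to be the delicate part of the proof; the small cases are checkable directly, and in particular for $m=1$ the overlap degenerates ($f_{02}=f_{\la 0}$) and straddling is impossible outright.
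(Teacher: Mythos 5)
Your reduction to the case $g(E_\la)\subseteq \La_M$ with $g(E_\la)_{\min}<\frac{\la}{3}$ and $g(E_\la)_{\max}>\frac13$ is sound and matches the paper's use of Lemma \ref{lem:geometric structure-1}, Lemma \ref{lem:gemometric structure-2} and the gap-length comparison with $f_0(H)$ and $f_\la(H)$. The problem is the heart of your argument: you propose to show that this ``straddling'' configuration is outright impossible, and the mechanism you offer does not close. Your first quantitative step is correct: since the hole $f_{02^{m-1}}(H)=\left(\frac{\la}{3}-3^{-(2m+1)},\frac{\la}{3}\right)$ must lie in a gap of $g(E_\la)$, you get $|\mu|\ge 3^{-m}$, which finishes only the case $m=1$. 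For $m\ge 2$ the proposed ``descent'' stalls immediately: straddling forces $g(E_\la)_{\min}$ only down to the left endpoint $\frac{\la}{3}-3^{-(2m+1)}=f_{02^{m-1}\la}(1)$, which is a point of $E_\la$, whereas the next larger hole $f_{02^{m-2}}(H)$ has its right endpoint at $\frac13-3^{-m}$, strictly below that; nothing in the gap-length argument forces the span of $g(E_\la)$ to reach it, so you cannot upgrade $|\mu|\ge 3^{-m}$ to $|\mu|\ge 3^{-(m-1)}$, let alone to $|\mu|\ge\frac13$. The claim that ``the straddling configuration reproduces itself inside the overlap at scale $L$'' is not substantiated, and the holes of $E_\la$ inside the overlap $[\frac{\la}{3},\frac13]=f_{02^m}(\De)$ are all of length at most $3^{-(m+1)}L$, so they give no new information. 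You have in effect identified the delicate step and left it unproved.

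It is also worth noting that you are trying to prove something stronger than the paper needs. The paper never shows that $g(E_\la)$ itself lies in a single first-level cell when $g(E_\la)\subseteq\La_M$; instead it applies Lemma \ref{lem:negative-1} to produce \emph{new} inclusions $\mu E_\la+h_i(b)\subseteq E_\la$ for two expanding affine maps $h_1,h_2$ with distinct fixed points $d_i/2$, and observes that at least one orbit $h_i^n(b)$ must escape to infinity; hence after finitely many steps some translate $\mu E_\la+h_i^n(b)$ leaves $\La_M$, lands in a single cell (or in $f_2(E_\la)$), and yields $3\mu E_\la+c\subseteq E_\la$ for that translate --- which is all the lemma asserts. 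If you want to salvage your approach, you would either need a genuinely new argument excluding the straddling position of $g(E_\la)$ (which appears to require information at the level of Theorem \ref{th:3} itself), or you should weaken your goal to finding \emph{some} admissible translate, as the paper does.
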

\begin{proof} By Lemma \ref{lem:geometric structure-1} it follows that $g(E_\la)\subseteq f_0(E_\la)\cup f_\la(E_\la)$ or $g(E_\la)\subseteq f_2(E_\la)$. If $g(E_\la)\subseteq f_2(E_\la)$, i.e., $\mu E_\la+b\subseteq \frac{E_\la+2}{3}$, then $3\mu E_\la+3b-2\subseteq E_\la$, and we are done by taking $c=3b-2$. In the following we assume $g(E_\la)\subseteq f_0(E_\la)\cup f_\la(E_\la)$.

In view of Figure \ref{fig:1}, the hole between $\La_L$ and $\La_M$ has length $L/3$, where $L=(1-\la)/3$ is the length of the largest hole in $E_\la$. Note that the length of the largest hole in $g(E_\la)$ is $|\mu|L<L/3$. Then it follows that $g(E_\la)$ can not intersect both $\La_L$ and $\La_M$. Similarly, we have $g(E_\la)$ can not intersect both $\La_M$ and $\La_R$. Therefore, $g(E_\la)$ is contained in one of the following intervals: $\La_L, \La_M$ and $\La_R$. We split the proof into the following three cases.

(I) $g(E_\la)\subseteq \La_L$. Then $g(E_\la)\subseteq f_0(\De)\cap E_\la=f_0(E_\la)$, where the second equality follows from the totally self-similarity of $E_\la$. So,
\[
\mu E_\la+b\subseteq \frac{E_\la}{3},
\]
which gives $3\mu E_\la+3b \subseteq E_\la$. Hence, the lemma holds in this case with $c=3b$.

(II) $g(E_\la)\subseteq \La_R$. Then $g(E_\la)\subseteq f_\la(\De)\cap E_\la=f_\la(E_\la)$. So $\mu E_\la+b\subseteq (E_\la+\la)/3$. Again the lemma holds by taking $c=3b-\la$.

(III) $g(E_\la)\subseteq \La_M$. Then $b=g(0)\in \La_M=[f_{02}(0), f_{\la\la}(1)]=[2/9, (1+4\la)/9]$. We only prove the case $0<\mu<\frac 13$, the case
$-\frac 13<\mu<0$ can be proved in the same way.
 In terms of Lemma \ref{lem:negative-1} we distinguish two cases.
\begin{itemize}
\item $0\le b\le (1-\mu)/3$. Then by Lemma \ref{lem:negative-1} (i) we have $\mu E_\la+3b\subseteq E_\la$. Since $3b\ge 2/3$, by Lemma \ref{lem:gemometric structure-2} (iii) this implies
\[
\mu E_\la+3b\subseteq f_2(E_\la)=\frac{E_\la+2}{3}.
\]
Hence, $3\mu E_\la+9 b-2\subseteq E_\la$. This proves the lemma by taking $c=9b-2$.

\item $(1-\mu)/3<b\le (1+4\la)/9$. Then by Lemma \ref{lem:negative-1} (i) it follows that
\[
\mu E_\la+h_i(b)\subseteq E_\la\quad\textrm{for }i=1, 2,
\]
where
\begin{align*}
&h_1(x):=3x-d_1&\textrm{with}\quad& d_1:=\la-\la\mu,\\
&h_2(x):=3x-d_2&\textrm{with}\quad& d_2:=\la-2\mu.
\end{align*}

In view of Figure \ref{fig:1}, if $\mu E_\la+h_i(b)$ is not contained in $\La_M$, then the above argument gives that $3\mu E_\la+c\subseteq E_\la$ for some $c\in\R$.
So,
repeating the above argument, it follows that either  $3\mu E_\la+c\subseteq E_\la$ for some $c\in\R$, or
\begin{equation}\label{eq:416-1}
\mu E_\la+h_i^n(b)\subseteq E_\la\quad\textrm{for all }n\in\N\textrm{ and }i\in\set{1,2}.
\end{equation}
We will deny the latter case.

Observe that $h_i$ is an expanding map with fixed point $d_i/2$. Then  $h_i^n(b)=3^n(b-d_i/2)+d_i/2$ for all positive integer $n$. Since $d_1\ne d_2$, it follows that for any $b\in((1-\mu)/3, (1+4\la)/9]$ we have either $h_1^n(b)$ or $h_2^n(b)$ tends to infinity as $n\ra\f$. This leads to a contradiction with (\ref{eq:416-1}).
\end{itemize}

Therefore, there must exist $c\in\R$ such that $3\mu E_\la+c\subseteq E_\la$. Our proof is complete.
\end{proof}

The last case to consider is $|\mu|=\frac 13$. In this case, we will prove that the only possibility is $\mu=\frac 13$ and $b\in\set{f_0(0), f_\la(0), f_2(0)}$.
\begin{lemma}\label{|mu|=1/3}
If $g(E_\la)=\mu E_\la+b\subseteq E_\la$ and $|\mu|=\frac 13$, then $\mu=\frac 13$ and $b\in\{f_0(0),f_{\la}(0),
f_2(0)\}$.
\end{lemma}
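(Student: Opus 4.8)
The plan is to combine the dichotomy of Lemma~\ref{lem:geometric structure-1} with a \emph{peeling} principle. Since $g(E_\la)$ has diameter $|\mu|=\tfrac13$, Lemma~\ref{lem:geometric structure-1} gives either $g(E_\la)\subseteq f_2(E_\la)$ or $g(E_\la)\subseteq f_0(E_\la)\cup f_\la(E_\la)$. The peeling principle is the following: whenever one can show that $g(E_\la)$, or one of its first-level pieces $g(f_d(E_\la))$, lies inside a single basic interval $f_{\bk}(\De)$, total self-similarity (Proposition~\ref{prop:equivalent-condition-totally-self-similar}(ii)) upgrades the containment to $f_{\bk}(E_\la)$, and composing with $f_{\bk}^{-1}$ produces an affine map of ratio $3^{|\bk|}\mu$ carrying $E_\la$ into $E_\la$. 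When that ratio has modulus one, the observation recorded at the start of this section---an affine map of modulus-one ratio sending $E_\la$ into itself is forced by the asymmetry of $E_\la$ to be the identity---pins down both $\mu$ and $b$. This at once settles the alternative $g(E_\la)\subseteq f_2(E_\la)$: peeling $f_2$ gives $3\mu E_\la+(3b-2)\subseteq E_\la$ with $|3\mu|=1$, whence $\mu=\tfrac13$ and $3b-2=0$, i.e.\ $b=f_2(0)$.

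For the remaining alternative I would first set up the hole bookkeeping. By total self-similarity the holes of $E_\la$ are exactly the images $f_{\bi}(H)$, $\bi\in\Om^*$, of lengths $(1-\la)\,3^{-|\bi|-1}$; in particular $H$ is the \emph{unique} hole of maximal length $L:=(1-\la)/3$. Consequently $g(E_\la)=\mu E_\la+b$ has a unique longest hole $\mu H+b$, of length $L/3$, while every other hole of $g(E_\la)$ has length at most $L/9$. On the other side, $f_0(E_\la)\cup f_\la(E_\la)=\La_L\cup\La_M\cup\La_R$, where each block has diameter strictly less than $\tfrac13$, the two gaps separating consecutive blocks are precisely $f_0(H)$ and $f_\la(H)$ of length $L/3$, and all remaining gaps have length at most $L/9$.

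Because $g(E_\la)$ has diameter $\tfrac13$ it cannot fit inside a single block; yet meeting two non-adjacent blocks, or all three, would force $g(E_\la)$ to possess two holes of length $\ge L/3$ (or a single one of length $>L/3$), contradicting that $\mu H+b$ is its unique longest hole. Hence $g(E_\la)$ straddles exactly one separating gap, and its longest hole $\mu H+b$ must coincide with that gap: either $\mu H+b=f_0(H)$ (with $g(E_\la)\subseteq\La_L\cup\La_M$) or $\mu H+b=f_\la(H)$ (with $g(E_\la)\subseteq\La_M\cup\La_R$). Matching endpoints of these intervals determines $b$ in each sign-configuration. When $\mu=\tfrac13$ the two possibilities give $b=0=f_0(0)$ and $b=\la/3=f_\la(0)$ respectively, i.e.\ $g=f_0$ or $g=f_\la$, as desired.

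The main obstacle is to exclude the two negative-ratio subcases $\mu=-\tfrac13$, where matching endpoints still produces admissible values of $b$. For $\mu H+b=f_\la(H)$ this is quick: the forced $b$ makes $g(E_\la)_{\max}=b$ exceed the right endpoint of $\La_R$, contradicting $g(E_\la)\subseteq\La_M\cup\La_R$. For $\mu H+b=f_0(H)$ the endpoint estimates are consistent---here $\la=1-3^{-m}\ge\tfrac23$ is exactly what blocks an immediate contradiction---so I would peel one further level: with the value of $b$ forced by the endpoint matching, a direct computation gives $g(f_\la(E_\la))=(3-E_\la)/9$, which lies in $f_{02}(\De)$, hence in $f_{02}(E_\la)=(E_\la+2)/9$ by total self-similarity; rearranging yields $1-E_\la\subseteq E_\la$, a modulus-one inclusion of ratio $-1$ that is impossible by the asymmetry of $E_\la$. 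With both negative subcases excluded, we conclude $\mu=\tfrac13$ and $b\in\{f_0(0),f_\la(0),f_2(0)\}$, completing the proof.
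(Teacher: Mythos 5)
Your proof is correct, and while it shares the paper's skeleton---the dichotomy of Lemma~\ref{lem:geometric structure-1}, the observation that $\La_L,\La_M,\La_R$ are each too short to contain a set of diameter $\tfrac13$, and the identification of the unique longest hole $\mu H+b$ of $g(E_\la)$ with one of the separating gaps $f_0(H)$, $f_\la(H)$---it executes the two hard steps differently, and in both cases more cleanly. For $\mu=\tfrac13$ the paper does \emph{not} use the gap-matching: it runs the iteration of Lemma~\ref{lem:negative-1}, splitting on whether $b\ge(1-\mu)/3$, arriving at the candidates $b=\la/3^p$ and $b=0$, and then separately excluding $p\ge2$ via the containment (\ref{eq:trans-3}); you read $b=0$ or $b=\la/3$ directly off the matched endpoints. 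To exclude $\mu=-\tfrac13$ the paper evaluates $g$ at the explicit point $y=(2\la 0^{m-1}2^\f)_3$ and verifies by digit manipulation (with separate subcases for $m=1$ and $m\ge2$) that $g(y)$ lands in the hole $f_{0\la}(H)$, resp.\ $f_{\la\la}(H)$; your replacement---a one-line maximum comparison for $b=(1+\la)/3+\la/9$, and for $b=1/3+\la/9$ the peeling $g(f_\la(E_\la))=(3-E_\la)/9\subseteq f_{02}(\De)\cap E_\la=f_{02}(E_\la)$ leading to $1-E_\la\subseteq E_\la$, which the asymmetry of $E_\la$ forbids---avoids the coding computation and never invokes the particular value of $m$. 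If you write this up, do state explicitly the fact underlying your hole bookkeeping, namely that the holes of $E_\la$ are exactly the intervals $f_{\bi}(H)$, $\bi\in\Om^*$, so that $H$ is the unique hole of maximal length $L$ and hence $\mu H+b$ is the unique longest hole of $g(E_\la)$; this is what Proposition~\ref{prop:equivalent-condition-totally-self-similar} provides and is also what the paper tacitly uses when it speaks of the unique largest gap $g(H)$.
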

\begin{proof}

First we prove that $g(E_\la)=-\frac{E_\la}{3}+b$ cannot be a subset of  $E_\la$ for any $b\in {\mathbb R}$.
Suppose on the contrary that $g(E_\la)=-\frac{E_\la}{3}+b\subseteq E_\la$. By Lemma \ref{lem:geometric structure-1} it follows that either $g(E_\la)\subseteq f_0(E_\la)\cup f_\la(E_\la)$ or $g(E_\la)\subseteq f_2(E_\la)$.
We claim that $g(E_\la)\nsubseteq f_2(E_\la)$. For otherwise we have $-E_\la+3b-2\subseteq E_\la$, which forces $-E_\la+3b-2=E_\la$. This contradicts the asymmetry  of $E_\la$.

Now we assume $g(E_\la)\subseteq f_0(E_\la)\cup f_\la(E_\la)$. In view of Figure \ref{fig:1}, one can check that the lengths of $\Lambda_L,\Lambda_M$
and $\Lambda_R$ are strictly less than $1/3$. So, either $g(E_\la)$   intersects both  $\La_L$ and $\La_M$, or $g(E_\la)$   intersects both $\La_M$ and $\La_R$. Since the gap $f_0(H)$ between $\La_L$ and $\La_M$ has length $L/3$ (recall that $L$ is the length of the largest hole in $E_\la$), and the gap $f_\la(H)$ between $\La_M$ and $\La_R$ also has length $L/3$, it follows that  the (unique) largest gap $g(H)$ in $g(E_\la)$ is either equal to $f_0(H)$, or equal to $f_\la(H)$. Therefore, we have $g(2/3)$ equals to either $f_0(\frac{\lambda+1}3)$ or $f_\lambda(\frac{\lambda+1}3)$, i.e., $b=1/3+\la/9$ or $b=(1+\la)/3+\la/9$.

Case (I). $b=1/3+\la/9$. Take $y=(2\la 0^{m-1}2^\f)_3\in E_\la$, where $m$ is the integer such that $\la=1-3^{-m}$. We claim $g(y)\in f_{0\la}(H)=((0\la\la2^\f)_3, (0\la 20^\f)_3)$. This can be verified by the following calculation:
\begin{align*}
g(y)=-(02\la 0^{m-1}\la 2^\f)_3+(0(2+\la)2^\f)_3&=(0\la(2-\la)2^{m-1}(2-\la)0^\f)_3\\
&<(0\la (2-\la)2^m0^\f)_3=(0\la 20^\f)_3,
\end{align*}
with the last equality using that $\la=(2^m0^\f)_3$. On the other hand,
\begin{align*}
g(y)=(0\la(2-\la)2^{m-1}(2-\la)0^\f)_3>(0\la\la2^\f)_3,
\end{align*}
where the inequality follows from $((2-\la)(2-\la)0^\f)_3>(\la2^\f)_3$ if $\la =1-3^{-1}$ and
\begin{align*}
(2^{m-1}(2-\la)0^\f)_3&>(2^{m-1}02^\f)_3=1-2/3^m=(\la+1)-(2-\la)\\
&>(\lambda+1)/3=(\la2^\f)_3
\end{align*}
if $\la =1-3^{-m}$ with $m\ge 2$.
This proves   $g(y)\notin E_\la$, leading  to a contradiction with our hypothesis $g(E_\la)\subseteq E_\la$.

Case (II). $b=(1+\la)/3+\la/9$. It follows from Case (I) that $g(y)-\la/3\in f_{0\la}(H)$. Thus $g(y)\in f_{\la\la}(H)$, which implies that $g(y)\notin E_\la$.

Therefore, we conclude that $g(E_\la)=-\frac{E_\la}{3}+b\nsubseteq E_\la$ for all $b\in\R$.

In the following we will determine for which kind of $b$ we have $\frac{E_\la}{3}+b\subseteq E_\la$.
Observe that $b=g(0)\in E_\la=f_0(E_\la)\cup f_\la(E_\la)\cup f_2(E_\la)$. We split the proof into the following two cases.

Case (I). $b\in f_2(E_\la)$. Then by Lemma \ref{lem:gemometric structure-2} (iii) we have $g(E_\la)\subseteq f_2(E_\la)$, which implies
\[
b=g(E_\la)_{\min}\ge \frac{2}{3}\quad\textrm{and}\quad \frac{1}{3}+b=g(E_\la)_{\max}\le 1.
\]
So, $b=2/3=f_2(0)$.

Case (II). $b\in f_0(E_\la)\cup f_\la(E_\la)$. Then by Lemma \ref{lem:geometric structure-1} it follows that $g(E_\la)=\frac{E_\la}{3}+b\subseteq f_0(E_\la)\cup f_\la(E_\la)$. We claim that  $b\in \{f_0(0),f_{\la}(0)\}
$

Write $\mu=1/3$. If $b\ge (1-\mu)/3$, then by Lemma \ref{lem:negative-1} (i) we have $\mu E_\la+3b+2\mu-\la\subseteq E_\la$. Since
\[
(\mu E_\la+3b+2\mu-\la)_{\max}=3\mu+3b-\la\ge 1+2\mu-\la>2\mu=\frac{2}{3},
\]
by Lemma \ref{lem:gemometric structure-2} it follows that $\mu E_\la+3b+2\mu-\la\subseteq f_2(E_\la)$. By Case (I) we conclude that $3b+2\mu-\la=2/3$. Using $\mu=1/3$ we obtain $b=\la/3=f_\la(0)$ as desired.

If $b<(1-\mu)/3$, then again by Lemma \ref{lem:negative-1} (i) we have $\mu E_\la+3b\subseteq E_\la$. Since $(\mu E_\la+3b)_{\min}=3b<1-\mu=2/3$, we conclude that $\mu E_\la+3b\subseteq f_0(E_\la)\cup f_\la(E_\la)$.

Repeating the above argument infinitely it follows that either
\begin{equation}\label{eq:trans-1}
b=\frac{\la}{3^p}\quad\textrm{for some }p\in\N,
\end{equation}
or
\begin{equation}\label{eq:trans-2}
\mu E_\la+3^p b\subseteq f_0(E_\la)\cup f_\la(E_\la)\quad\textrm{for all }p\in\N.
\end{equation}
Clearly, (\ref{eq:trans-2}) implies that $b=0=f_0(0)$. In view of (\ref{eq:trans-1}) it suffices to prove $b\ne \la/3^p$ for any $p\ge 2$.

Suppose on the contrary that $b=\la/3^p$ for some integer $p\ge 2$. Then
\[
g(f_\la(E_\la))_{\max}=\left(\frac{1}{3}\left(\frac{E_\la+\la}{3}\right)+\frac{\la}{3^p}\right)_{\max}=\frac{1+\la}{9}+\frac{\la}{3^p}\le \frac{1+2\la}{9}<\frac{1}{3}.
\]
By Lemma \ref{lem:gemometric structure-2} (ii) it follows that $g(f_\la(E_\la))\subseteq f_0(E_\la)$. This implies
\begin{equation}\label{eq:trans-3}
\Gamma:=\frac{E_\la+\la}{3}+\frac{\la}{3^{p-1}}\subseteq E_\la.
\end{equation}
In view of Figure \ref{fig:1} this is impossible, since $\Gamma_{\max}>{(1+\la)}/{3}$ and $\Gamma_{\min}<2/3$.
\end{proof}

Now we are ready to prove Proposition \ref{prop:positive}. 
\begin{proof}[Proof of Proposition \ref{prop:positive}]
 By Lemma \ref{lem:step1-1} and Lemma \ref{|mu|=1/3} we have either $0<|\mu|< 1/3$ or $\mu=1/3$. Suppose $\mu\ne 3^{-n}$ for any positive integer $n$. Note by Lemma \ref{|mu|=1/3} that $\mu\ne-1/3$. Then there exists a positive integer $k$ such that $3^{-k-1}<\mu<3^{-k}$ or $3^{-k-1}\le -\mu< 3^{-k}$. Using Lemma \ref{|mu|<1/3} for $k$ times yields
\[3^k\mu E_\la+c_k\subseteq E_\la\quad\textrm{for some }c_k\in\R.
\]
This implies that  either $|3^k\mu|\in(1/3, 1)$ or $-E_\la/3+c_k\subseteq E_\la$, which is impossible by Lemma \ref{lem:step1-1} and Lemma \ref{|mu|=1/3}. This completes the proof.
\end{proof}

To prove Theorem \ref{th:3}, we still need to determine the translation parameter $b$ for which $E_\la/3^n+b\subseteq E_\la$.

\begin{proof}[Proof of Theorem \ref{th:3}]
Let $g(E_\la)=\mu E_\la+b\subseteq E_\la$ with $0<|\mu|<1$ and $b\in\R$. By Proposition \ref{prop:positive} we have $\mu=3^{-n}$ for some $n\in\N$. So it suffices to prove $b\in\set{f_{\mathbf i}(0): \mathbf i\in\Om^n}$. We do this now by induction on $n$.

When $n=1$ this has been proven by Lemma \ref{|mu|=1/3}. Now let $n\ge 1$ and suppose $b\in\set{f_{\mathbf i}(0): \mathbf i\in\Om^n}$ for $\mu=3^{-n}$. We will prove that $b\in\set{f_{\mathbf i}(0): \mathbf i\in\Om^{n+1}}$ for $\mu=3^{-(n+1)}$.

Take $\mu=3^{-(n+1)}$. Observe that $b=g(0)\in E_\la=f_0(E_\la)\cup f_\la(E_\la)\cup f_2(E_\la)$. We distinguish two cases: (A) $b\in f_2(E_\la)$; (B) $b\in f_0(E_\la)\cup f_\la(E_\la)$.

Case (A). $b\in f_2(E_\la)$. Then by Lemma \ref{lem:gemometric structure-2} (iii) we have $g(E_\la)\subseteq f_2(E_\la)$, i.e.,
\[
\frac{E_\la}{3^{n+1}}+b\subseteq \frac{E_\la+2}{3}\quad\Longrightarrow\quad \frac{E_\la}{3^n}+3b-2\subseteq E_\la.
\]
 By the induction hypothesis there exists $\mathbf j\in\Om^n$ such that $3b-2=f_{\mathbf j}(0)$. So, $b=\frac{f_{\mathbf j}(0)+2}{3}=f_{2\mathbf j}(0)$ as required.

Case (B). $b\in f_0(E_\la)\cup f_\la(E_\la)$. Note that $b=g(E_\la)_{\min}$. If $b\ge \frac{\la}{3}$, then by Lemma \ref{lem:gemometric structure-2} (ii) we have $g(E_\la)\subseteq f_\la(E_\la)$, which implies $\frac{E_\la}{3^n}+3b-\la\subseteq E_\la$. By the induction hypothesis there exists $\mathbf j\in\Om^n$ such that $3b-\la=f_{\mathbf j}(0)$. Again, we obtain $b=\frac{f_{\mathbf j}(0)+\la}{3}=f_{\la\mathbf j}(0)$.

To finish the proof it suffices to consider the case $b<\la/3$. Recall that $\la=1-3^{-m}$ for some $m\in\N$ and $\mu=3^{-(n+1)}$. The case for $n\ge m$ is simpler than that for $n<m$. We consider them separately.

(B1). $n\ge m$. Then
\[
\left(\frac{E_\la}{3^{n+1}}+b\right)_{\max}<\frac{1}{3^{n+1}}+\frac{\la}{3}=\frac{1}{3}+\frac{1}{3^{n+1}}-\frac{1}{3^{m+1}}\le \frac{1}{3}.
\]
By Lemma \ref{lem:gemometric structure-2} (i) and the induction hypothesis it follows that $b=f_{0\mathbf j}(0)$ for some $\mathbf j\in\Om^n$.

(B2). $n<m$. Then $\la=1-3^{-m}\ge 1-3^{-(n+1)}=1-\mu=(2^{n+1}0^\f)_3$. Since $b=g(E_\la)_{\min}<\la/3=(02^m0^\f)_3$,   we   consider the following four cases.

\begin{itemize}
\item $b\le (02^n0^\f)_3$. Then
\[
\left(\frac{E_\la}{3^{n+1}}+b\right)_{\max}\le \frac{1}{3^{n+1}}+\frac1 3-\frac{1}{3^{n+1}}=\frac{1}{3}.
\]
So, by Lemma \ref{lem:gemometric structure-2} (i) and  the induction hypothesis it follows that there exists $\mathbf j\in\Om^n$ such that  $b=f_{0\mathbf j}(0)$.

\item $b=(02^n\la 0^\f)_3$. Then
\[
\left(\frac{f_\la(E_\la)}{3^{n+1}}+b\right)_{\max}=\frac{1+\la}{3^{n+2}}+\frac{1}{3}+\frac{\la}{3^{n+2}}-\frac{1}{3^{n+1}}<\frac{1}{3}.
\]
By Lemma \ref{lem:gemometric structure-2} (i) we have $\frac{f_\la(E_\la)}{3^{n+1}}+b\subseteq f_0(E_\la)$, which implies
\begin{equation}\label{eq:28-1}
F_n:=\frac{E_\la}{3^{n+1}}+\frac{2\la}{3^{n+1}}+1-\frac{1}{3^n}\subseteq E_\la.
\end{equation}
Observe that $(F_n)_{\min}=\frac{2\la}{3^{n+1}}+1-\frac{1}{3^n}>\frac{2}{3}$. By Lemma \ref{lem:gemometric structure-2} (iii) it follows that $F_n\subseteq f_2(E_\la)$, which implies
\begin{equation}\label{eq:28-2}
F_{n-1}:=\frac{E_\la}{3^{n}}+\frac{2\la}{3^n}+1-\frac{1}{3^{n-1}}\subseteq E_\la.
\end{equation}
 Repeating the same argument $n$ times, in view of (\ref{eq:28-1}) and (\ref{eq:28-2}), it follows that
 \[
 \frac{E_\la}{3}+\frac{2\la}{3}\subseteq E_\la.
 \]
 By Lemma \ref{|mu|=1/3} we deduce that $\frac{2\la}{3}\in\set{0, \frac{\la}{3}, \frac{2}{3}}$, leading to  a contradiction.

\item $b=(02^{n+1}0^\f)_3$. Then
\[
\left(\frac{f_2(E_\la)}{3^{n+1}}+b\right)_{\min}=\frac{2}{3^{n+2}}+\frac{1}{3}-\frac{1}{3^{n+2}}=\frac{1}{3}+\frac{1}{3^{n+2}}\in\left(\frac{1}{3}, \frac{2}{3}\right).
\]
By Lemma \ref{lem:gemometric structure-2} (ii) we have $\frac{f_2(E_\la)}{3^{n+1}}+b\subseteq f_\la(E_\la)$, which implies
\begin{equation}\label{eq:28-3}
\Gamma_n:=\frac{E_\la}{3^{n+1}}+\frac{1}{3^{n+1}}+\frac{1}{3^m}\subseteq E_\la.
\end{equation}
Since $1\le n<m$, we have $(\Gamma_n)_{\max}=\frac{2}{3^{n+1}}+\frac{1}{3^m}\le \frac{1}{3}$. By Lemma \ref{lem:gemometric structure-2} (i) it follows that $\Gamma_n\subseteq f_0(E_\la)$, i.e.,
\begin{equation}\label{eq:28-4}
\Gamma_{n-1}:=\frac{E_\la}{3^n}+\frac{1}{3^n}+\frac{1}{3^{m-1}}\subseteq E_\la.
\end{equation}
Iterating this argument $n$ times, in view of (\ref{eq:28-3}) and (\ref{eq:28-4}), we conclude that
\[
\frac{E_\la}{3}+\frac{1}{3}+\frac{1}{3^{m-n}}\subseteq E_\la.
\]
By Lemma \ref{|mu|=1/3} this can only happen when $m=n+1$, and in this case we have $b=f_{02^m}(0)=f_{\la0^{n}}(0)$ with $\la 0^n\in\Omega^{n+1}$.

\item $(02^{n+1}0^\f)_3<b<(02^m 0^\f)_3$. Then $m>n+1$ and
\[
b=(02^{n+1}0^\f)_3+\frac{\ep}{3^{n+2}}\quad\textrm{with}\quad 0<\ep<(2^{m-n-1}0^\f)_3.
\]
By the same argument as in the case $b=(02^{n+1}0^\f)_3$ we can prove that $\frac{f_2(E_\la)}{3^{n+1}}+b\subseteq f_\la(E_\la)$, which implies
\begin{equation}\label{eq:29-1}
G_n:=\frac{E_\la}{3^{n+1}}+\frac{1+\ep}{3^{n+1}}+\frac{1}{3^m}\subseteq E_\la.
\end{equation}
Since $\ep<(2^{m-n-1}0^\f)_3$, we have  $(G_n)_{\max}=\frac{2+\ep}{3^{n+1}}+\frac{1}{3^m}<3^{-n}\le 1/3$. By Lemma \ref{lem:gemometric structure-2} (i) it follows that $G_n\subseteq f_0(E_\la)$, which implies
\begin{equation}\label{eq:29-2}
G_{n-1}:=\frac{E_\la}{3^n}+\frac{1+\ep}{3^n}+\frac{1}{3^{m-1}}\subseteq E_\la.
\end{equation}
Repeating this argument we conclude that
\[
\frac{E_\la}{3}+\frac{1+\ep}{3}+\frac{1}{3^{m-n}}\subseteq E_\la.
\]
By Lemma \ref{|mu|=1/3} it follows that
\[\frac{1+\ep}{3}+\frac{1}{3^{m-n}}\in\set{0, \frac{\la}{3}, \frac{2}{3}},\]
which is impossible since $\frac{1+\ep}{3}+\frac{1}{3^{m-n}}\in(\frac{1}{3}, \frac{2}{3})$.
\end{itemize}

By Cases (A) and (B) we prove that for $\mu=3^{n+1}$ the translation parameter $b=f_{\bf i}(0)$ for some ${\bf i}\in\Omega^{n+1}$. By induction this completes the proof.
\end{proof}

\begin{remark}
Although  each affine map $g$ satisfying $g(E_\la)\subseteq E_\la$ must be of the form $f_{\bf i}$. This is not necessarily true for any other totally self-similar set. A counterexample can be found  in   \cite[Theorem 6.2]{Ele-Kel-Mat-10}.
\end{remark}

 \section{Unique expansion, multiple expansions and {final remarks}}\label{th:5}

In this section we will determine the size of the self-similar set $E_\la$ when it is totally self-similar, and the size of the set of points having finite triadic codings with respect to the alphabet $\Om=\set{0, \la, 2}$ as well. For $k\in\N\cup\set{\aleph_0, 2^{\aleph_0}}$ let
\[
\u_\la^{(k)}:=\set{x\in E_\la: x\textrm{ has precisely }k\textrm{ different triadic codings} }.
\]
Then for each $x\in \u_\la^{(k)}$ there exist precisely $k$ different sequences $(d_i)\in\Om^\N$ such that $x=((d_i))_3$. In particular, for $k=1$ the set $\u_\la^{(1)}$ contains all points with a unique triadic coding.

%
%
%
  \begin{proof}[Proof of Theorem \ref{th:4}]
 Take $\la=1-3^{-m}$. By Theorem \ref{th:1} it follows that $E_\la$ is totally self-similar. Then by Proposition \ref{prop:equivalent-condition-totally-self-similar} one can verify that
 \begin{equation}\label{eq:intersection}
f_{02^m}=f_{\la 0^m}\quad\textrm{and}\quad f_0(E_\la)\cap f_\la(E_\la)=f_{02^m}(E_\la).
 \end{equation}
 So $E_\la$ is a graph-directed set satisfying the open set condition (cf.~\cite{Ngai-Wang-01}). More precisely, let $X$ be the subshift of finite type over the alphabet $\set{0, \la, 2}$ with the forbidden block $02^m$. Then
 \[E_\la=\set{((d_i))_3: (d_i)\in X}.\]
  Let $s:=\dim_H E_\la$ and let $\mathcal H^s(\cdot)$ be the $s$-dimensional Hausdorff measure. Observe that $f_0(E_\la)\cap f_2(E_\la)=f_\la(E_\la)\cap f_2(E_\la)=\emptyset$. Therefore,  by (\ref{eq:intersection}) it follows that
 \begin{equation}\label{eq:dim-1}
 \begin{split}
 \mathcal H^s(E_\la)&=\sum_{d\in\Om}\mathcal H^s(f_d(E_\la))-\mathcal H^s(f_0(E_\la)\cap f_\la(E_\la))\\
 &=3^{1-s}\mathcal H^s(E_\la)-3^{-(m+1)s}\mathcal H^s(E_\la).
 \end{split}
 \end{equation}
 Note that $X$ is a transitive subshift of finite type, i.e., for any two admissible words in $X$ we can find a sequence in $X$ containing both of them. Then $\mathcal H^s(E_\la)\in(0, \f)$ (cf.~\cite{Mauldin-Williams-88}).
 Hence, (\ref{eq:dim-1}) implies that the Hausdorff dimension $\dim_H E_\la=s$  satisfies
 \[
 1=3^{1-s}-3^{-(m+1)s}.
 \]

 For the dimension of $\u_\la:=\u_\la^{(1)}$ we observe by (\ref{eq:intersection}) that any $x\in f_0(E_\la)\cap f_\la(E_\la)$ has at least two triadic codings. So, by the same argument as above $\u_\la$ is also a graph-directed set satisfying the open set condition. But in this case the underline  subshift of finite type has forbidden blocks $02^m$ and $\la 0^m$. Let $t:=\dim_H\u_\la$. Then  by (\ref{eq:intersection}) the $t$-dimensional Hausdorff measure of  $\u_\la$ satisfies
 \begin{equation}\label{eq:dim-2}
 \begin{split}
 \mathcal H^t(\u_\la)&=\sum_{d\in\Om}\mathcal H^t(f_d(\u_\la))-2\mathcal H^t(f_0(\u_\la)\cap f_\la(\u_\la))\\
 &=3^{1-t}\mathcal H^t(\u_\la)-2\mathcal H^t(f_{02^m}(\u_\la))\\
 &=3^{1-t}\mathcal H^t(\u_\la)-2\cdot 3^{-(m+1)t}\mathcal H^t(\u_\la).
 \end{split}
 \end{equation}
 Note that $\mathcal H^t(\u_\la)\in(0, 1)$. This implies that   $\dim_H\u_\la=t$  is given by
 \[
 1=3^{1-t}-2\cdot 3^{-(m+1)t}.
 \]
Clearly, $\dim_H\u_\la<\dim_H E_\la$.

 For any  integer $k\ge 2$, by Theorems 1.1 and 1.5 in \cite{KKKL-2} it follows that  the Hausdorff dimension of   $\u_\la^{(k)}$ is the same as that of $\u_\la$, i.e., $\dim_H\u_\la^{(k)}=\dim_H\u_\la=t.$ Moreover, $\u_\la^{(\aleph_0)}$ is countably infinite, and then the dimension of $\u_\la^{(2^{\aleph_0})}$ is equal to $\dim_H E_\la$. This completes the proof.
 \end{proof}

 At the end of this section we pose some  questions.

\begin{question} Can we generalize the model studied in this paper to a two-parameter family
 \[
 f_0(x)=\rho x,\quad f_1(x)=\rho (x+\la)\quad\textrm{and}\quad f_2(x)=\rho x+1-\rho,
 \]
 where $0<\rho,\la<1$ and $\rho(2+\la)<1$? Clearly, the convex hull of the attractor  is the unit interval $I:=[0,1]$. Moreover, $f_0(I)\cap f_1(I)\ne \emptyset$ and $f_1(I)\cap f_2(I)=\emptyset$. What can we say about its spectrum, all its generating IFSs etc.?
 \end{question}

\begin{question} Let $E$ be a totally self-similar set generated by the IFS $\{\phi_i\}_{i=1}^N$. Suppose $\phi_i(E)\cap \phi_j(E)\ne \emptyset$ for some $i,j\in\{1,\cdots,N\}$. Does this imply that there exist two different finite words ${\bf i,j}\in\Omega^*$ such that  $\phi_{\bf i}=\phi_{\bf j}$?
\end{question}

Note by Theorem \ref{th:2} that the spectrum $l_\la$ vanishes if and only if $\la$ is irrational. Furthermore, we know from Theorem \ref{th:2} (ii) that $l_\la$ is computable if $\la$ is rational. However, only in a few cases we can explicitly determine the value $l_\la$.

\begin{question} Can we describe the spectrum set
\[\Lambda:=\set{l_\la: \la\in(0, 1)}?\]
Is $\Lambda$ a discrete set? Is it  closed?
\end{question}

\section*{Acknowledgement} The authors wish to thank Professor Y. Wang and Professor D.-J. Feng for some useful discussions. The second author was supported by NSFC No.~11401516. He would like to thank the Mathematical Institute of Leiden University. The third author was supported by the China Scholarship Council with grant number
201706745005.





\end{document}